\documentclass[11pt]{article}
\usepackage{amssymb}
\usepackage{amsmath}
\usepackage{mathrsfs}
\usepackage{graphics}
\usepackage{graphicx}
\usepackage{xcolor}
\usepackage{subfigure}
\usepackage[T1]{fontenc}
\usepackage{latexsym,amssymb,amsmath,amsfonts,amsthm}\usepackage{txfonts}
\numberwithin{equation}{section}
\topmargin =0mm \headheight=0mm \headsep=0mm \textheight =220mm
\textwidth =160mm \oddsidemargin=0mm\evensidemargin =0mm
\sloppy \brokenpenalty=10000

\newcommand{\be}{\begin{eqnarray}}
\newcommand{\ben}{\begin{eqnarray*}}
\newcommand{\en}{\end{eqnarray}}
\newcommand{\enn}{\end{eqnarray*}}

\newtheorem{theorem}{Theorem}[section]
\newtheorem{lemma}{Lemma}[section]
\newtheorem{prp}[theorem]{Proposition}
\newtheorem{thm}[theorem]{Theorem}
\newtheorem{cor}[theorem]{Corollary}
\newtheorem{dfn}{Definition}[section]

\newtheorem{remark}{Remark}

\definecolor{rr}{rgb}{0.8,0.2,0}
\definecolor{ts}{rgb}{1,0,0}
\definecolor{dz}{rgb}{0,0,1}
\begin{document}
\renewcommand{\theequation}{\arabic{section}.\arabic{equation}}
\begin{titlepage}
\title{\bf Ergodicity for stochastic conservation laws with multiplicative noise}
\author{ Zhao Dong$^{1}$, Rangrang Zhang$^{2,}\footnote{Corresponding author.}$, Tusheng Zhang$^{3}$\\
{\small $^1$ RCSDS, Academy of Mathematics and Systems Science, Chinese Academy of Sciences, Beijing 100190, China}\\
{\small $^2$  School of Mathematics and Statistics,
Beijing Institute of Technology, Beijing 100081, China}\\
{\small $^3$ School of Mathematics, University of Manchester, Oxford Road, Manchester M13 9PL, England, UK}\\
({\small{\sf dzhao@amt.ac.cn},\ {\sf rrzhang@amss.ac.cn}, \ {\sf tusheng.zhang@manchester.ac.uk}})}
\date{}
\end{titlepage}
\maketitle

\noindent\textbf{Abstract}:
 We proved that there exists a unique invariant measure for solutions of stochastic conservation laws with Dirichlet boundary condition driven by multiplicative noise. Moreover, a polynomial mixing property is established. This is done in the setting of kinetic solutions taking values in an
  $L^1$-weighted space.

\noindent \textbf{AMS Subject Classification}:\ \ Primary 60F10; Secondary 60H15.

\noindent\textbf{Keywords}: stochastic conservation laws; kinetic solutions; invariant measures; mixing rate.

\tableofcontents

\section{Introduction}
In this paper, we investigate the long time behaviour of stochastic scalar conservation laws with multiplicative noise. The (deterministic) conservation laws are fundamental to our understanding of the space-time evolution laws of interesting physical quantities. Mathematically or statistically, such physical laws should incorporate with noise influences, due to the lack of knowledge of certain physical parameters as well as bias or incomplete measurements arising in {experiments} or modeling. More precisely, fix any
$T>0$ and let $(\Omega,\mathcal{F},\mathbb{P},\{\mathcal{F}_t\}_{t\in
[0,T]},(\{\beta_k(t)\}_{t\in[0,T]})_{k\in\mathbb{N}})$ be a stochastic basis. Without loss of generality, here the filtration $\{\mathcal{F}_t\}_{t\in [0,T]}$ is assumed to be complete and $\{\beta_k(t)\}_{t\in[0,T]},k\in\mathbb{N}$, are independent (one-dimensional)  $\{\mathcal{F}_t\}_{t\in [0,T]}-$Wiener processes. We use $\mathbb{E}$ to denote the expectation with respect to the probability measure $\mathbb{P}$.
Let $D\subset\mathbb{R}^d$ denote a bounded domain with Lipschitz boundary  $\partial D$.
We are concerned with the following initial-Dirichlet boundary valued problem of the scalar conservation law with stochastic forcing, denoted by $\mathcal{E}(A,\Phi,\vartheta)$:
\begin{eqnarray}\label{P-19}
du+div(A(u))dt=\Phi(u) dW(t) \quad {\text{in}} \ \ D\times(0,T),
\end{eqnarray}
with the initial condition
\begin{eqnarray}\label{P-19-1}
  u(0,\cdot)=\vartheta\quad {\text{in}}\ \ D,
\end{eqnarray}
and the boundary condition
\begin{eqnarray}\label{P-19-2}
  u=0, \quad {\text{on}}\ \  \Sigma.
\end{eqnarray}
Here, $\Sigma=(0,T)\times \partial D$,
$u:(\omega,t, x)\in\Omega\times [0,T]\times D\mapsto u(\omega,t, x):=u(t, x)\in\mathbb{R}$ is a random field, the flux function $A:\mathbb{R}\to\mathbb{R}^d$ and the coefficient $\Phi:\mathbb{R}\to\mathbb{R}$ are measurable and fulfill certain conditions specified later,
and $W$ is a cylindrical Wiener process on a given (separable) Hilbert space $U$ with
the form $W(t)=\sum_{k\geq 1}\beta_k(t) e_k,t\in[0,T]$, where $(e_k)_{k\geq 1}$ is a complete orthonormal basis of the Hilbert space $U$. Set $Q=(0,T)\times D$.

The deterministic conservation laws (i.e., $\Phi\equiv0$ in  (\ref{P-19})) is well studied in the PDEs literature, see e.g. the monograph \cite{Dafermos} and the most recent reference Ammar,
Willbold and Carrillo \cite{K-P-J} (and references therein). As well known, the Cauchy problem
for the deterministic first-order PDE (\ref{P-19}) does not admit any (global) smooth solutions, but there exist infinitely many weak solutions and an additional entropy condition has to be added to get the uniqueness and further to identify the physical weak solution. The notion of entropy solutions for the deterministic problem in the $L^{\infty}$ framework was initiated by Otto in \cite{O}. Porretta and Vovelle \cite{P-V} studied the problem in the $L^1$  setting. To deal with unbounded solutions, the authors of \cite{P-V} defined a notion of renormalized entropy solutions which generalized Otto's original definition of entropy solutions. The kinetic formulation of weak entropy solution of the Cauchy problem for a general multidimensional scalar conservation laws was derived by Lions, Perthame and Tadmor in \cite{L-P-T}. Concerning the initial-boundary problem for deterministic conservation laws, it is crucial to give an interpretation of the boundary condition (\ref{P-19-2}).
In the setting of functions of bounded variation, Bardos, Le Roux and N\'{e}d\'{e}lec \cite{BLN} considered the boundary condition (\ref{P-19-2}) as an ``entropy'' inequality on the boundary $\Sigma$ and obtained the global well-posedness of entropy solutions to (\ref{P-19})-(\ref{P-19-2}). Later, Otto \cite{O} extended it to the $L^{\infty}$ setting by introducing the notion of boundary entropy-flux pairs. Imbert and Vovelle \cite{IV04} derived a kinetic formulation of weak entropy solutions of the initial-boundary value problem and proved the uniqueness of such a kinetic solution.


In recent years, there has been a growing interest in the study of conservation laws driven by stochastic
forcing. Having a stochastic forcing term in (\ref{P-19}) is very natural and important for various modeling problems arising in a wide variety of fields, e.g., physics, engineering, biology and so on.
The Cauchy problem (\ref{P-19}) driven by additive noise has been studied by Kim in \cite{K} wherein the author proposed a method of compensated compactness to prove the existence of a stochastic weak entropy solution via vanishing viscosity approximation. Concerning the case with multiplicative noise, Feng and Nualart \cite{F-N} introduced a notion of strong entropy solutions and established the existence and uniqueness in the one-dimensional case. Using a kinetic formulation, Debussche and Vovelle \cite{D-V-1} solved the stochastic Cauchy problem (\ref{P-19}) with periodic boundary condition in any dimension. Based on  \cite{D-V-1}, Dong et. al. \cite{DWZZ} established small noise large deviations for kinetic solutions of periodic stochastic conservation laws with multiplicative noise. On the other hand, Vallet and Wittbold in \cite{V-W} studied the multi-dimensional Dirichlet boundary value problem for stochastic conservation laws driven by additive noise. For the initial-Dirichlet boundary value problem with multiplicative noise, Bauzet, Vallet and Wittbold \cite{BVW14} established the existence and uniqueness of stochastic entropy solutions when the flux function is assumed to be globally Lipschitz. Recently, Kobayasi and Noboriguchi \cite{KN16} relaxed the condition on the flux function to be of polynomial growth by using kinetic formulation for stochastic conservation laws with nonhomogeneous Dirichlet boundary conditions.

 We remark that there are not many works on the long time behavior/ergodicity of stochastic scalar conservation laws. In the space dimension one, E et. al. \cite{E00} proved the existence and uniqueness of invariant measures for the periodic stochastic inviscid Burgers equation with additive forcing.
 Debussche and Vovelle \cite{D-V-2}  studied scalar conservation laws with additive stochastic forcing on toruses of any dimension and proved the existence and uniqueness of an invariant measure for sub-cubic fluxes and sub-quadratic fluxes, respectively.
Later, Chen and Pang \cite{CP19} extend the result of \cite{D-V-2} to degenerate second-order parabolic-hyperbolic conservation laws driven by additive noise.
 We want to stress that in the above papers, only additive noise was considered and no convergence rate to the invariant measure was obtained.

The purpose of this paper is to obtain the ergodicity and further to establish  the  polynomial
 mixing property for stochastic conservation laws (\ref{P-19})-(\ref{P-19-2}) driven by multiplicative noise. As far as we know, this is the first result for the case of multiplicative noise. Our method is inspired by the work \cite{DGT20} where the authors proved the ergodicity for entropy solutions of
  stochastic porous media equations on smooth bounded domain with Dirichlet boundary conditions.

However, we will work on the setting of kinetic formulation of the solutions. As in \cite{DGT20}, in order to obtain a polynomial rate of convergence to the invariant measure,
we choose to work on a weighted $L^1_{w;x}$ space for a suitable weight function $w$. As an important part of the proof, we apply the doubling variables method in $L^1_{w;x}$ to obtain a ``super $L^1_{w;x}-$contration principle'' for the solutions, that is, there exists an extra strictly negative term on the right hand side of the $L^1_x-$contration principle (see (\ref{e-16-1})), which is the key to obtain the polynomial decay rate. As the invariant measure is living in the  $L^1_x$-space,  we need to
show that the kinetic solution to (\ref{P-19})-(\ref{P-19-2}) has a continuous extension (with respect to the time) in the space $L^1_{\omega}L^1_x$. To do so, we use the vanishing viscosity method to introduce  approximating equations and to overcome difficulties caused by the unboundedness of the flux function. This is quite different from the work \cite{DGT20} where the authors used smooth approximation of the coefficients.
 The Markov semigroup associated with the kinetic solution is defined in the $L^1_x$-space, which is further proved to be Feller. The final step is to show that the solutions of the stochastic conservation laws  converges to a unique stationary solution with a polynomial convergence rate.
The ``super $L^1_{w;x}-$contration principle'' plays a key role.

The rest of the paper is organized as follows.  In Section 2, the mathematical formulation of stochastic scalar conservation laws and some known results are presented. In Section 3, we state our main results. Section 4 is devoted to proving a ``super $L^1_{w;x}-$contration principle'' for the kinetic solutions. The existence of a continuity extension of kinetic solutions is proved in Section 5. In Section 6, we prove that the kinetic solution of the initial-boundary value problem admits a unique invariant measure and satisfies the polynomial mixing property.
\vskip 0.3cm
In the sequel, we use the letter $C$ to denote a generic constant whose values may change from one line to another. Sometimes, we precise its dependence on  parameters.

\section{Preliminaries}
Let $\mathcal{L}(K_1,K_2)$ (resp. $\mathcal{L}_2(K_1,K_2)$) be the space of bounded (resp. Hilbert-Schmidt) linear operators from a Hilbert space $K_1$ to another Hilbert space $K_2$, whose norm is denoted by $\|\cdot\|_{\mathcal{L}(K_1, K_2)}$(resp. $\|\cdot\|_{\mathcal{L}_2(K_1, K_2)})$. Further, $C_b$ represents the space of bounded, continuous functions.
 Let $\|\cdot\|_{L^p_x}$ denote the norm of the $L^p( D)$-space for $p\in (0,\infty]$, where $x$ indicates the name of the variable. In particular, when $p=2$, we set $H=L^2(D)$. For all $a\in \mathbb{R}$ and $p\in (0,\infty]$, denote by $W^{a,p}(D)$ the usual Sobolev space, whose norm is denoted by $\|\cdot\|_{W^{a,p}_x}$. When $p=2$, set
$H^a(D)=W^{a,2}(D)$. Moreover, we use the brackets $\langle\cdot,\cdot\rangle$ to denote the duality between $C^{\infty}_c( D\times \mathbb{R})$ and the space of distributions over $ D\times \mathbb{R}$.
 With a slight abuse of the notation $\langle\cdot,\cdot\rangle$, we set
\[
\langle F, G \rangle:=\int_{ D}\int_{\mathbb{R}}F(x,\xi)G(x,\xi)dxd\xi, \quad F\in L^p( D\times \mathbb{R}), G\in L^q( D\times \mathbb{R}).
\]
for $1\leq p< \infty$ and $q:=\frac{p}{p-1}$, the conjugate exponent of $p$. In particular, when $p=1$, we set $q=\infty$ by convention.

For a measure $m$ on the Borel measurable space $D\times[0,T]\times \mathbb{R}$, the shorthand $m(\phi)$ is defined by
\[
m(\phi):=\langle m, \phi \rangle([0,T]):=\int_{D\times[0,T]\times \mathbb{R}}\phi(x,t,\xi)dm(x,t,\xi), \quad  \phi\in C_b(D\times[0,T]\times \mathbb{R}).
\]
Define
\begin{eqnarray}\label{eee-12}
 w(x)=-(x_1+x_2+\dots+x_d)+C_0,
\end{eqnarray}
where $C_0$ is a constant bigger than $\max_{x\in D}(x_1+x_2+\dots+x_d)$ so that
 $w(x)>0$ in $D$.
Let $L^1_{w;x}$ be the space of all measurable functions $f:  D\rightarrow \mathbb{R}$ such that
\[
\|f\|_{L^1_{w;x}}:=\int_{  D}|f(x)|w(x)dx<\infty.
\]
Clearly, $L^1_{w;x}$ is equivalent to $L^1_x$.

To end this subsection, we mention some notations related to the predictability. For a stochastic basis $(\Omega, \mathcal{F}, \{\mathcal{F}_t\}_{t\in
[0,T]}, \mathbb{P})$ and $p\in [1,\infty)$, we denote by $L^p_{\omega}$ the space of $p-$integrable random variables in $\omega\in \Omega$.
For $T>0$, let $\mathcal{B}([0,T])$ be the Borel $\sigma-$algebra on $[0,T]$ and denote by $\mathcal{P}_T\subset \mathcal{B}([0,T])\otimes \mathcal{F}$ the predictable $\sigma-$algebra.
Let $L^{p}_{\omega}L^q_x$ stand for the space of $p-$integrable random variables taking values in $L^q_x$ and $L^p_{\omega;t}L^q_x$ represent the set of functions $v\in L^p(\Omega\times [0,T];L^q_x)$ which are equal $dt\times \mathbb{P}-$almost everywhere to a predictable process $u$, where $dt$ is the Lebesuge measure on $[0,T]$.


\subsection{Hypotheses}
For the initial value $\vartheta$, the flux function $A$, and the coefficient $\Phi$ of (\ref{P-19})-(\ref{P-19-2}), we introduce the following hypotheses.
\begin{description}
  \item[(\textbf{H1})]
  The flux function $A\in C^2(\mathbb{R};\mathbb{R}^d)$. Each component $A_j$ is differentiable, strictly increasing and odd. The derivative $a_j=A'_j\geq0$ has at most polynomial growth. That is, there exist constants $C>0$ and $q_0\geq 1$ such that
     \begin{eqnarray}\label{qeq-22}
     \sum^d_{j=1} |a_j(\xi)-a_j(\zeta)|\leq C(1+|\xi|^{q_0-1}+|\zeta|^{q_0-1})|\xi-\zeta|.
      \end{eqnarray}
     Moreover, assume
      \begin{eqnarray}\label{eqq-36}
       \sum^d_{j=1} |A_j(u)-A_j(v)|\geq C_{q_0}|u-v|^{q_0+1}, \quad {\text{for}}\ \ u,v\in \mathbb{R}.
      \end{eqnarray}

\item[(\textbf{H2})]
The initial value $\vartheta\in L^{q}_{\omega}L^{q}_{x}$ for some $q> 2(q_0+1)$, which is an $\mathcal{F}_0\otimes \mathcal{B}(D)-$measurable random variable.
  \item[(\textbf{H3})] For each $u\in \mathbb{R}$, the map $\Phi(u): U\rightarrow H$ is defined by $\Phi(u) e_k=g_k(\cdot, u)$, where $g_k(\cdot,u)$ is a regular function on $ D$.
      More precisely, we assume that $g_k\in C(D\times \mathbb{R})$ satisfying
\begin{eqnarray}\label{equ-28}
G^2(x,u)=\sum_{k\geq 1}|g_k(x,u)|^2&\leq& C_0(1+|u|^2),\\
\label{equ-29}
\sum_{k\geq 1}|g_k(x,u)-g_k(y,v)|^2&\leq& C_0\Big(|x-y|^2+{|u-v|^2}\Big),
\end{eqnarray}
for some constant $C_0>0$ and $x, y\in  D, u,v\in \mathbb{R}$.
Since $\|g_k\|_{H}\leq C\|g_k\|_{C( D)}$, we deduce that $\Phi(u)\in \mathcal{L}_2(U,H)$, for each $u\in \mathbb{R}$.

\end{description}


To deduce the $L^1-$theory of (\ref{P-19})-(\ref{P-19-2}), we need a stronger condition than (H3) on $\Phi$:
\begin{description}
  \item[(\textbf{H4})] There exist constants $C_k$ such that
  \begin{eqnarray}
  |g_k(x,u)|\leq C_k(1+|u|), \quad \sum_{k\geq 1}C^2_k<\infty,
  \end{eqnarray}
   and (\ref{equ-29}) remain unchanged.
\end{description}
\begin{remark}
  The set of $A_i$ satisfying Hypothesis (H1) is not empty, e.g. taking $A_i(u)=\frac{1}{d}|u|^{q_0}u$ with an even integer $q_0$.
Moreover, the condition (\ref{eqq-36}) shows that there exists at least one non-zero component of $A(u)$, which satisfies the non-degeneracy condition required by Theorem 1 in \cite{D-V-2}.
\end{remark}

\begin{remark}
 The condition $q> 2(q_0+1)$ in (H2) is required to apply the generalized It\^{o} formula from Proposition A.1 in \cite{DHV}, see the proof of Theorem \ref{thm-10}.
\end{remark}

\subsection{Kinetic solution}
 We  follow closely the framework of \cite{KN16,KN18}.
Firstly, the domain $D$ can be localized by the following method:
choosing a finite open cover $\{U_{i}\}_{i=0,\dots, M}$ of $\bar{D}$ and a partition of unity $\{\lambda_i\}_{i=0,\dots, M}$ on $\bar{D}$ subject to $\{U_{i}\}_{i=0,\dots, M}$ such that $U_{0}\cap \partial D=\emptyset$, for $i=1,\dots, M$,
\begin{eqnarray*}
  D^{i}:=D\cap U_{i}=\Big\{x\in U_{i}; (\mathcal{A}_i x)_d>h_{i}(\overline{\mathcal{A}_i x})\Big\},\quad
  \partial D^{i}:=\partial D\cap U_{i}=\Big\{x\in U_{i}; (\mathcal{A}_i x)_d=h_{i}(\overline{\mathcal{A}_i x})\Big\},
\end{eqnarray*}
with a Lipschitz function $h_{i}: \mathbb{R}^{d-1}\rightarrow \mathbb{R}$, where $\mathcal{A}_i$ is an orthogonal matrix corresponding to a change of coordinates of $\mathbb{R}^d$ and $\bar{y}=(y_1, \dots, y_{d-1})$ for $y\in \mathbb{R}^d$. In order to emphasize the correspondence between $U_{i}$ and $\lambda_i$, we denote by $U_{\lambda_i}=U_{i}, D^{\lambda_i}=D^{i}, h_{\lambda_i}=h_{i}$.

For the sake of simplicity, we will drop the index $i $ of $\lambda_i$ and suppose that the matrix $\mathcal{A}_i=I$.
Moreover, we set
\[Q^{\lambda}=(0,T)\times D^{\lambda},\quad \Sigma^{\lambda}=(0,T)\times \partial D^{\lambda}, \quad \Pi^{\lambda}=\{\bar{x}; x\in U_{\lambda}\}.
\]
Denote by $L_{\lambda}$ the Lipschitz constant of $h_{\lambda}$ on $\Pi^{\lambda}$ and set $L:=\sum^{M}_{i=0}L_{\lambda_i}$.

To regularize functions that are defined on $D^{\lambda}$ and $\mathbb{R}$, let us consider a standard modifier $\psi$ on $\mathbb{R}$, that is, $\psi$ is a nonnegative and even function in $C^{\infty}_c((-1,1))$ with $\int_{\mathbb{R}}\psi=1$. We set
\begin{eqnarray*}
 \rho^{\lambda}(x)=\Pi^{d-1}_{i=1} \psi(x_i)\psi(x_d-(L_{\lambda}+1)),
\end{eqnarray*}
for $x=(x_1,\dots, x_d)$. For $\gamma, \delta>0$, we set
$\rho^{\lambda}_{\gamma}(x)=\frac{1}{\gamma^d}\rho^{\lambda}\Big(\frac{x}{\gamma}\Big)$ and $\psi_{\delta}(\xi)=\frac{1}{\delta}\psi\Big(\frac{\xi}{\delta}\Big)$.

 Recall that we are working on the stochastic basis $(\Omega,\mathcal{F},\mathbb{P},\{\mathcal{F}_t\}_{t\in [0,T]},(\beta_k(t))_{k\in\mathbb{N}})$.
\begin{dfn}(Kinetic measure)\label{dfn-3}
 A map $m$ from $\Omega$ to $\mathcal{M}^+_0( D\times [0,T)\times \mathbb{R})$, the set of non-negative finite measures over $ D\times [0,T)\times\mathbb{R}$, is said to be a kinetic measure if
\begin{description}
  \item[1.] $ m $ is weakly measurable, that is, for each $\phi\in C_b( D\times [0,T)\times \mathbb{R}), \langle m, \phi \rangle: \Omega\rightarrow \mathbb{R}$ is measurable,
  \item[2.] $m$ vanishes at infinity, i.e.,
\begin{eqnarray}\label{equ-37}
\lim_{R\rightarrow +\infty}\mathbb{E}[m( D\times [0,T)\times B^c_R)]=0,\quad B^c_R:=\{\xi\in \mathbb{R}; |\xi|\geq R\},
\end{eqnarray}
  \item[3.] for every $\phi\in C_b( D\times \mathbb{R})$, the process
\[
(\omega,t)\in\Omega\times[0,T)\rightarrow \int_{ D\times [0,t]\times \mathbb{R}}\phi(x,\xi)dm(x,s,\xi)\in\mathbb{R}\quad {\text{is\ predictable}}.
\]
\end{description}
\end{dfn}

\begin{dfn}(Kinetic solution)\label{dfn-1}
Let $\vartheta\in L^{q}_{\omega}L^{q}_x.$ A measurable function $u: \Omega\times  [0,T]\times D\rightarrow \mathbb{R}$ is called a kinetic solution to (\ref{P-19})-(\ref{P-19-2}) with datum $\vartheta$ if
\begin{description}
  \item[1.] $u\in L^q_{\omega;t}L^q_{x}$ and for any $1\leq p\leq q$, there exists $C_p\geq0$ such that
\begin{eqnarray}\label{eqq-31}
\mathbb{E}\ \underset{0\leq t\leq T}{{\rm{ess\sup}}}\ \|u(t)\|^p_{L^p_x}\leq C_p,
\end{eqnarray}
\item[2.] there exists a kinetic measure $m$ and for any $N>0$, there exist nonnegative functions $\bar{m}^{\pm}_N\in L^1(\Omega \times \Sigma\times (-N,N))$ such that $\{\bar{m}^{\pm}_N(t)\}$ are predictable,
\begin{eqnarray*}
    \underset{\xi\uparrow N}{{\rm{\lim}}}\ \bar{m}^{+}_N(t,x,\xi)=\underset{\xi\downarrow -N}{{\rm{\lim}}}\ \bar{m}^{-}_N(t,x,\xi)=0,
   \end{eqnarray*}
for all $\varphi\in C^{\infty}_c([0,T)\times \bar{D}\times (-N,N))$, $f:=I_{u>\xi}$ satisfies
\begin{eqnarray}\notag
&&\int^T_0\langle f(t), \partial_t \varphi(t)\rangle dt+\langle f_0, \varphi(0)\rangle
 +\int^T_0\langle f(t), a(\xi)\cdot \nabla \varphi (t)\rangle dt+M_N\int_{\Sigma\times \mathbb{R}}f_b\varphi d\xi d\sigma(x)dt\\ \notag
&=& -\sum_{k\geq 1}\int^T_0\int_{ D} g_k(x,u(t,x))\varphi (t,x,u(t,x))dxd\beta_k(t)\\ \notag
&& -\frac{1}{2}\int^T_0\int_{ D}\partial_{\xi}\varphi (t,x,u(t,x))G^2(x,u(t,x))dxdt
 \\ \label{P-21}
&&+m(\partial_{\xi} \varphi)+\int_{\Sigma\times \mathbb{R}}\partial_{\xi}\varphi\bar{m}^{+}_Nd\xi d\sigma(x)dt, \quad a.s. ,
\end{eqnarray}
and $\bar{f}:=1-f=I_{u\leq\xi}$ satisfies
\begin{eqnarray}\notag
&&\int^T_0\langle \bar{f}(t), \partial_t \varphi(t)\rangle dt+\langle \bar{f}_0, \varphi(0)\rangle +\int^T_0\langle \bar{f}(t), a(\xi)\cdot \nabla \varphi (t)\rangle dt+M_N\int_{\Sigma\times \mathbb{R}}\bar{f}_b\varphi d\xi d\sigma(x)dt\\ \notag
&=& \sum_{k\geq 1}\int^T_0\int_{ D} g_k(x,u(t,x))\varphi (t,x,u(t,x))dxd\beta_k(t) \\ \notag
&& +\frac{1}{2}\int^T_0\int_{D}\partial_{\xi}\varphi (t,x,u(t,x))G^2(x,u(t,x))dxdt\\ \label{P-21-1}
&&-m(\partial_{\xi} \varphi)-\int_{\Sigma\times \mathbb{R}}\partial_{\xi}\varphi\bar{m}^{-}_Nd\xi d\sigma(x)dt, \quad a.s. ,
\end{eqnarray}
 where $a(\xi):=A'(\xi)$, $G^2=\sum^{\infty}_{k=1}|g_k|^2$, $M_N=\underset{-N\leq \xi\leq N}{\max}\ |a(\xi)|$, $f_0=I_{\vartheta>\xi}$ and $f_b=I_{0>\xi}$.
\end{description}
\end{dfn}
\begin{remark}
 The boundary function $\bar{m}^{\pm}$ does not appear in the case of the periodic boundary condition. In this case, it is enough to consider the equality (\ref{P-21}) for $f$ (the equality satisfied by $\bar{f}$ can be derived from (\ref{P-21})). However, in the case of the Dirichlet boundary conditions, the boundary functions $\bar{m}^{+}$ and $\bar{m}^{-}$ are different from each other, thus, we need to consider both (\ref{P-21}) and (\ref{P-21-1}).
\end{remark}
We need the following definition.
\begin{dfn}(Young measure)
 Let $(X,\lambda)$ be a finite measure space and $\mathcal{M}_1(\mathbb{R})$ be the set of all (Borel) probability measures on $\mathbb{R}$. A map $\nu:X\to\mathcal{M}_1(\mathbb{R})$ is
said to be a Young measure on $X$, if for each $\phi\in C_b(\mathbb{R})$, the map $z\in X\mapsto \nu_z(\phi)\in\mathbb{R}$ is measurable. We say that a Young measure $\nu$ vanishes at infinity if, for each $1\leq p\leq q$,
\begin{eqnarray}\label{equ-26}
\int_X\int_{\mathbb{R}}|\xi|^pd\nu_z(\xi)d\lambda(z)<+\infty.
\end{eqnarray}

\end{dfn}

Let $(X,\lambda)$ be a finite measure space. For some measurable function $u: X\rightarrow \mathbb{R}$, define $f: X\times \mathbb{R}\rightarrow [0,1]$ by $f(z,\xi)=I_{u(z)>\xi}$ a.e. and
we use $\bar{f}:=1-f$ to denote its conjugate function. Define $\Lambda_f(z,\xi):=f(z,\xi)-I_{0>\xi}$, which can be viewed as a correction to $f$. Note that $\Lambda_f$ is integrable on $X\times \mathbb{R}$ if $u$ is.


Define two non-increasing functions $\mu_m(\xi)$ and $\mu_{\nu}(\xi)$ on $\mathbb{R}$ by
\begin{eqnarray}\label{a-2}
  \mu_m(\xi)&=&\mathbb{E}m([0,T)\times D\times (\xi, \infty) ),\\
\label{eqq-20}
  \mu_{\nu}(\xi)&=&\mathbb{E}\int_{(0,T)\times D\times (\xi, \infty)}d \nu_{t,x}(\zeta)dxdt.
\end{eqnarray}
where $m$ is a kinetic measure and $\nu$ is a Young measure satisfying (\ref{equ-26}). Let $\mathbb{D}$ be the set of $\xi\in (0,\infty)$ such that both of $\mu_m$ and $\mu_{\nu}$ are differentiable at $-\xi$ and $\xi$. Clearly, $\mathbb{D}$ is a full set in $(0,+\infty)$. Denote by $\mu'_{m}$ and $\mu'_{\nu}$ the derivatives of $\mu_m$ and $\mu_{\nu}$, respectively. It was shown in Lemma 2 of \cite{KN16} that
\begin{lemma}\label{lem-4}
\begin{description}
  \item[(i)] For any $0\leq p\leq q$,
  \begin{eqnarray*}
    \limsup_{\xi\rightarrow \infty,\xi\in \mathbb{D}}\mu'_m(\pm \xi)=0, \quad \limsup_{\xi\rightarrow \infty,\xi\in \mathbb{D}}\xi^p\mu'_{\nu}(\pm \xi)=0.
  \end{eqnarray*}

  \item[(ii)] If $N\in \mathbb{D}$, then as $\delta\downarrow 0$,
  \begin{eqnarray*}
    \int_{\mathbb{R}}\psi_{\delta}(N\pm \zeta)d\mu_m(\zeta)\rightarrow \mu'_m(\mp N), \quad \int_{\mathbb{R}}\psi_{\delta}(N\pm \zeta)(1+|\zeta|^2)d\mu_{\nu}(\zeta)\rightarrow (1+N^2)\mu'_{\nu}(\mp N).
  \end{eqnarray*}
\end{description}

\end{lemma}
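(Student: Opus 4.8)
The plan is to reduce both parts to soft properties of non-increasing functions, using only the a priori decay of $m$ and $\nu$ recorded in (\ref{equ-37}) and (\ref{equ-26}), together with the a priori bound $\mathbb{E}[m(D\times[0,T)\times\mathbb{R})]<\infty$ on the kinetic measure of a solution (so that $\mu_m$ is finite-valued, non-increasing, with $\mu_m(+\infty)=0$). For part (i), the point is that each of the four quantities is, up to sign, the a.e.\ derivative of a bounded non-increasing function, and that a nonnegative integrable function on $[0,\infty)$ must have $\liminf=0$ at infinity. Since $\mu_m'\le 0$ and $\mu_\nu'\le 0$ wherever defined, each $\limsup$ in the statement is already $\le0$, so it suffices to produce $\xi_n\to\infty$ along which the quantity tends to $0$. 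For the $+\xi$ assertions: $\int_0^\infty(-\mu_m'(\xi))\,d\xi\le\mu_m(0)<\infty$, hence $\liminf_{\xi\to\infty}(-\mu_m'(\xi))=0$; and for $0<p\le q$, integrating $\xi^p$ against the nonnegative measure $-\,d\mu_\nu$ and using Tonelli's theorem gives $\int_0^\infty\xi^p(-\mu_\nu'(\xi))\,d\xi\le\int_0^\infty\xi^p\,(-d\mu_\nu(\xi))=\mathbb{E}\int_{(0,T)\times D}\int_{\mathbb{R}}(\max(\zeta,0))^p\,d\nu_{t,x}(\zeta)\,dx\,dt<\infty$ by (\ref{equ-26}), hence $\liminf_{\xi\to\infty}\xi^p(-\mu_\nu'(\xi))=0$ (for $p=0$ one argues exactly as for $\mu_m$). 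As $\mathbb{D}$ is co-null in $(0,\infty)$, these liminf's are unaffected by restricting $\xi$ to $\mathbb{D}$ (a Chebyshev argument produces a sequence in $\mathbb{D}$ along which the quantity vanishes). The $-\xi$ assertions follow by the identical argument applied to the ``left tails'' $\xi\mapsto\mathbb{E} m(D\times[0,T)\times(-\infty,-\xi])$ and $\xi\mapsto\mathbb{E}\int_{(0,T)\times D}\nu_{t,x}((-\infty,-\xi])\,dx\,dt$, which are non-increasing, vanish at $+\infty$, have a.e.\ derivatives $\mu_m'(-\xi)$ and $\mu_\nu'(-\xi)$ (partition $\mathbb{R}$ at $-\xi$), and inherit the required integrability from (\ref{equ-26}) with $\min(\zeta,0)$ in place of $\max(\zeta,0)$.

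For part (ii), everything rests on the elementary claim: if $g:\mathbb{R}\to\mathbb{R}$ is non-increasing and differentiable at a point $a$, then $\int_{\mathbb{R}}\psi_\delta(a-\zeta)\,dg(\zeta)\to g'(a)$ as $\delta\downarrow0$. First I would note that $\zeta\mapsto\psi_\delta(a-\zeta)$ belongs to $C^{\infty}_c(\mathbb{R})$, so integrating by parts against $dg$ leaves no boundary terms: $\int_{\mathbb{R}}\psi_\delta(a-\zeta)\,dg(\zeta)=\int_{\mathbb{R}}g(\zeta)\psi_\delta'(a-\zeta)\,d\zeta=\int_{\mathbb{R}}(g(\zeta)-g(a))\psi_\delta'(a-\zeta)\,d\zeta$, where I used $\int_{\mathbb{R}}\psi_\delta'(a-\zeta)\,d\zeta=0$. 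Then I would substitute $\zeta=a-\delta u$ and insert the first-order expansion $g(\zeta)-g(a)=g'(a)(\zeta-a)+o(|\zeta-a|)$: the main term becomes $-g'(a)\int_{-1}^1 u\,\psi'(u)\,du=g'(a)$, using $\int_{-1}^1 u\,\psi'(u)\,du=-1$ (integration by parts, $\psi(\pm1)=0$, $\int\psi=1$), while the remainder is $O(\delta^{-1}\rho(\delta))$ with $\rho(\delta):=\sup_{|h|\le\delta}|g(a+h)-g(a)-g'(a)h|=o(\delta)$, hence $o(1)$. Granting the claim: since $\psi$ is even, $\psi_\delta(N\pm\zeta)=\psi_\delta((\mp N)-\zeta)$, so the claim with $g=\mu_m$ and $a=\mp N$ (legitimate because $N\in\mathbb{D}$) gives $\int_{\mathbb{R}}\psi_\delta(N\pm\zeta)\,d\mu_m(\zeta)\to\mu_m'(\mp N)$. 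For the $\mu_\nu$ identity, decompose $1+|\zeta|^2=(1+N^2)+(|\zeta|^2-N^2)$; on the support of $\psi_\delta(N\pm\zeta)$ one has $|\zeta|^2-N^2=O(\delta)$, so the $(|\zeta|^2-N^2)$-contribution is bounded in modulus by $C_N\delta\int_{\mathbb{R}}\psi_\delta(N\pm\zeta)\,(-d\mu_\nu(\zeta))$, which tends to $0$ since the mollified mass converges, by the claim, to the finite number $-\mu_\nu'(\mp N)$; and the $(1+N^2)$-contribution is $(1+N^2)\int_{\mathbb{R}}\psi_\delta(N\pm\zeta)\,d\mu_\nu(\zeta)\to(1+N^2)\mu_\nu'(\mp N)$ by the claim applied to $g=\mu_\nu$, $a=\mp N$.

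The only genuinely nonroutine step is the elementary claim in part (ii): one must track the scaling in the substitution $\zeta=a-\delta u$ carefully enough that the limiting constant comes out as $+g'(a)$ (with the right sign, via $\int_{-1}^1 u\,\psi'(u)\,du=-1$), and one must confirm that the $o(|\zeta-a|)$ remainder, once multiplied by $|\psi_\delta'|\sim\delta^{-2}$ and integrated over an interval of length $2\delta$, genuinely vanishes. The rest of the argument is routine bookkeeping with Tonelli's theorem and the monotonicity of $\mu_m$ and $\mu_\nu$.
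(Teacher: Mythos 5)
Your argument is correct, but note that the paper itself does not prove this lemma at all: it is quoted verbatim from Lemma~2 of \cite{KN16}, so there is no in-paper proof to compare against. What you have supplied is a self-contained replacement, and it is sound. Part~(i) is exactly the right soft argument: $\mu_m'\le 0$ and $\mu_\nu'\le 0$ force the limsups to be $\le 0$, and the bound $\int w(\xi)(-g'(\xi))\,d\xi\le\int w(\xi)(-dg)(\xi)$ (the a.e.\ derivative being the density of the absolutely continuous part of $-dg$) turns the tail conditions (\ref{equ-37}) and (\ref{equ-26}) into integrability of $-\mu_m'$ and of $\xi^p(-\mu_\nu')$ on $(R,\infty)$, whence the liminf along a sequence in the co-null set $\mathbb{D}$ vanishes; the reflection to the left tails is handled correctly. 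The key computation in part~(ii) is also right: after integration by parts and the substitution $\zeta=a-\delta u$ the constant comes out as $-g'(a)\int u\psi'(u)\,du=g'(a)$, and the $o(\delta)$ modulus of differentiability beats the $\delta^{-1}$ from the scaling, while the $(|\zeta|^2-N^2)=O_N(\delta)$ splitting disposes of the weight. Two small points worth making explicit. First, your standing hypothesis that $\mu_m$ is finite-valued (equivalently $\mathbb{E}\,m(D\times[0,T)\times K)<\infty$ for bounded $K$, not just for the far tails supplied by (\ref{equ-37})) is genuinely needed for the integration by parts in part~(ii) at a fixed level $N$; it does not follow from Definition~\ref{dfn-3} as written, but it is established in the well-posedness theory of \cite{KN16,D-V-1} and is implicitly assumed whenever $\mu_m$ is used, so flagging it as an input, as you do, is the right call. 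Second, for $0<p<1$ the exponent falls outside the range of (\ref{equ-26}); the domination $\xi^p\le 1+\xi$ on $(0,\infty)$ closes that gap, and is worth a half-line in a written version.
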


It is shown in \cite{D-V-1} that for each kinetic solution $u$, almost surely the function $f=I_{u(x,t)>\xi}$ admits left and right weak limits at any point $t\in[0,T]$. More precisely, the following results are obtained.
\begin{prp}(Left and right weak limits)\label{prp-3} Let $u$ be a kinetic solution to (\ref{P-19})-(\ref{P-19-2}).
Then $f=I_{u>\xi}$ admits, almost surely, left and right limits respectively at every point $t\in [0,T]$. More precisely, for any  $t\in [0,T]$, there exist functions $f^{t\pm}$ on $\Omega\times  D\times \mathbb{R}$ such that $\mathbb{P}-$a.s.
\begin{eqnarray*}
\langle f(t-\varepsilon),\varphi\rangle\rightarrow \langle f^{t-},\varphi\rangle
\end{eqnarray*}
and
\begin{eqnarray*}
\langle f(t+\varepsilon),\varphi\rangle\rightarrow \langle f^{t+},\varphi\rangle
\end{eqnarray*}
as $\varepsilon\rightarrow 0$ for all $\varphi\in C^1_c( D\times \mathbb{R})$. Moreover, almost surely,
\[
\langle f^{t+}-f^{t-}, \varphi\rangle=-\int_{ D\times[0,T]\times \mathbb{R}}\partial_{\xi}\varphi(x,\xi)I_{\{t\}}(s)dm(x,s,\xi).
\]
In particular, almost surely, the set of $t\in [0,T]$ fulfilling that $f^{t+}\neq f^{t-}$ is countable.
\end{prp}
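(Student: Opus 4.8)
The plan is to adapt the argument of Debussche and Vovelle \cite{D-V-1} to the present initial-boundary setting, the essential observation being that, away from the boundary, equation (\ref{P-21}) expresses $t\mapsto\langle f(t),\varphi\rangle$ as a càdlàg-type process plus a function of bounded variation, so that one-sided limits exist at every point. First I would fix $\varphi\in C^1_c(D\times\mathbb{R})$. Since $\varphi$ has compact support strictly inside $D\times\mathbb{R}$, one can extend it (by zero in a neighbourhood of $\bar D$) to a test function in $C^\infty_c([0,T)\times\bar D\times(-N,N))$ for $N$ large enough, after multiplying by a smooth cutoff $\theta(t)$ in time with $\theta\equiv 1$ on $[0,t_0]$. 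With such a test function the boundary terms $M_N\int_{\Sigma\times\mathbb{R}}f_b\varphi\,d\xi\,d\sigma(x)\,dt$ and $\int_{\Sigma\times\mathbb{R}}\partial_\xi\varphi\,\bar m^+_N\,d\xi\,d\sigma(x)\,dt$ drop out, and (\ref{P-21}) reduces to the same identity one has in the periodic case:
\begin{eqnarray*}
\langle f(t),\varphi\rangle &=& \langle f_0,\varphi\rangle + \int_0^t\langle f(s),a(\xi)\cdot\nabla\varphi\rangle\,ds \\
&& + \sum_{k\geq 1}\int_0^t\int_D g_k(x,u)\varphi(x,u)\,dx\,d\beta_k(s) + \frac12\int_0^t\int_D\partial_\xi\varphi(x,u)G^2(x,u)\,dx\,ds \\
&& - \int_{D\times[0,t]\times\mathbb{R}}\partial_\xi\varphi\,dm(x,s,\xi),
\end{eqnarray*}
valid for a.e.\ $t$, where I must be careful to first establish it for a.e.\ $t$ via the definition and then work with a fixed good representative.

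The three terms on the right after $\langle f_0,\varphi\rangle$ are then handled as follows. The drift integral $\int_0^t\langle f(s),a(\xi)\cdot\nabla\varphi\rangle\,ds$ is absolutely continuous in $t$, since $|\langle f(s),a(\xi)\cdot\nabla\varphi\rangle|$ is bounded uniformly (using $f\in[0,1]$ and the compact $\xi$-support of $\varphi$, so only $|a(\xi)|$ on a bounded set enters), hence has a continuous version. The stochastic integral $\sum_k\int_0^t\int_D g_k(x,u)\varphi(x,u)\,dx\,d\beta_k(s)$ is a continuous $L^2$-martingale: its quadratic variation density is $\sum_k(\int_D g_k(x,u)\varphi(x,u)\,dx)^2\le \|\varphi\|_\infty^2\, |D|\, \int_D G^2(x,u)\,dx\le C(1+\|u(s)\|_{L^2_x}^2)$ by (\ref{equ-28}), which is integrable in $(\omega,s)$ thanks to (\ref{eqq-31}) with $p=2$; so it admits an a.s.\ continuous modification. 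The measure term $t\mapsto \int_{D\times[0,t]\times\mathbb{R}}\partial_\xi\varphi\,dm$ is a function of finite variation in $t$ (bounded by $\|\partial_\xi\varphi\|_\infty\, m(D\times[0,T)\times\mathrm{supp}_\xi\varphi)$, finite a.s.), hence has left and right limits at every $t$, with jump at $t$ equal to $\int_{D\times\{t\}\times\mathbb{R}}\partial_\xi\varphi\,dm$. Combining, $t\mapsto\langle f(t),\varphi\rangle$ agrees a.e.\ with a process having one-sided limits everywhere; calling $f^{t\pm}$ the corresponding weak limits (defined first on a countable dense set of $\varphi$'s and extended by density, using the uniform bound $|\langle f(t),\varphi\rangle|\le\|\varphi\|_{L^1_{x,\xi}}$ on a bounded $\xi$-range together with a separability argument to get a single null set), one obtains the stated convergences, and the jump formula $\langle f^{t+}-f^{t-},\varphi\rangle=-\int_{D\times[0,T]\times\mathbb{R}}\partial_\xi\varphi\,I_{\{t\}}(s)\,dm$ follows from the jump of the finite-variation part. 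Finally, since a finite measure $m$ charges only countably many time-slices with positive mass, the set $\{t: f^{t+}\neq f^{t-}\}$ is a.s.\ countable.

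The main obstacle is the measurability/separability bookkeeping needed to produce the functions $f^{t\pm}$ as genuine functions on $\Omega\times D\times\mathbb{R}$ with a single exceptional null set valid for all $\varphi$ simultaneously and all $t$: one must choose a countable family $\{\varphi_j\}$ dense in $C^1_c(D\times\mathbb{R})$, run the above argument for each $\varphi_j$ on a common null set, control the modulus of the one-sided limits by $\|\varphi_j\|_{L^1_{x,\xi}}$ restricted to the relevant compact $\xi$-range (this uses only $0\le f\le 1$), and then pass to general $\varphi$ by density, checking that $f^{t\pm}$ so defined is indeed a $[0,1]$-valued kinetic function of the form $I_{u^{t\pm}>\xi}$-modulo-a-Young-measure as in \cite{D-V-1}. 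Since the structural part of this is identical to the periodic case treated in \cite{D-V-1} and the only genuinely new point — that the boundary terms vanish for interior test functions — is elementary, I would present the argument concisely and refer to \cite{D-V-1} for the routine details.
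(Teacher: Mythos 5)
Your proposal is correct and follows essentially the same route as the paper, which does not prove Proposition \ref{prp-3} itself but cites Debussche--Vovelle \cite{D-V-1}: the decomposition of $t\mapsto\langle f(t),\varphi\rangle$ into an absolutely continuous drift, a continuous square-integrable martingale, and a finite-variation measure term, followed by the atom/countability argument, is exactly the argument of that reference. Your one genuinely new observation --- that for $\varphi\in C^1_c(D\times\mathbb{R})$ the boundary terms in (\ref{P-21}) vanish, reducing matters to the periodic case --- is the right (and only) adaptation needed here.
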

For the function $f=I_{u>\xi}$, we set $f^{\pm}(t)=f^{t \pm}$, $t\in [0,T]$. Since we are dealing with the filtration associated to Brownian motion, both $f^{\pm}$ are  clearly predictable as well. Also $f=f^+=f^-$ almost everywhere in time and we can take any of them in an integral with respect to the Lebesgue measure or in a stochastic integral.

Due to Proposition \ref{prp-3}, the weak form (\ref{P-21})-(\ref{P-21-1}) satisfied by a kinetic solution can be strengthened to be weak only respect to $x$ and $\xi$.
In order to state it, we need to introduce the cutoff function, for any $0<\eta<N$,
\begin{eqnarray*}
  \Psi_{\eta}(\xi)=\int^{\xi}_{-\infty}(\psi_{\eta}(\zeta+N-\eta)-\psi_{\eta}(\zeta-N+\eta))d\zeta.
\end{eqnarray*}
With the help of the cutoff function, the test functions in (\ref{P-21}) and (\ref{P-21-1}) can be extended  to the class of functions in $C^{\infty}_{c}([0,T)\times \mathbb{R}^d\times \mathbb{R} )$. The following result was proved in \cite{KN16}.
\begin{prp}\label{prp-7}
  Assume $u$ is a kinetic solution of (\ref{P-19})-(\ref{P-19-2}). Set $f:=I_{u>\xi}$ and define
   \begin{eqnarray*}
  f^{\lambda,\gamma}(t,x,\xi)=\int_{D^{\lambda}}f(t,y,\xi)\rho^{\lambda}_{\gamma}(y-x)dy,
  \end{eqnarray*}
 for any element $\lambda$ of the partition of unity $\{\lambda_i\}$ on $\bar{D}$.
 Let $\tilde{f}^{(\lambda)}$ be any weak star limit of $\{f^{\lambda,\gamma}\}$ as $\gamma\rightarrow 0$ in $L^{\infty}(\Omega\times\Sigma^{\lambda}\times \mathbb{R})$
  and define $\tilde{f}:=\sum^M_{i=0}\lambda_i\tilde{f}^{(\lambda_i)}$. Then
  \begin{description}
    \item[(i)] for any $\varphi\in C^{\infty}_c(\mathbb{R}^d\times \mathbb{R})$, $t\in [0,T), \eta>0$ and $N>0$, the function $f=I_{u>\xi}$ satisfies
        \begin{eqnarray}\notag
          &&-\int_D\int^N_{-N}\Psi_{\eta}f^+(t)\varphi d\xi dx+\int^t_0\int_D\int^N_{-N}\Psi_{\eta}f a(\xi)\cdot \nabla \varphi d\xi dxds\\ \notag
          && +\int_D\int^N_{-N}\Psi_{\eta}f_0\varphi d\xi dx+\int^t_0\int_{\partial D}\int^N_{-N}\Psi_{\eta}(-a(\xi)\cdot \mathbf{n}) \tilde{f}\varphi d\xi d\sigma ds\\ \notag
          &=& -\sum_{k\geq 1}\int^t_0\int_D\int^N_{-N}\Psi_{\eta}g_k(x,\xi)\varphi d\nu_{x,s}(\xi)dxd\beta_k(s)\\ \notag
          &&-\frac{1}{2}\int^t_0\int_D\int^N_{-N}\Psi_{\eta}\partial_{\xi}\varphi G^2(x,\xi)d\nu_{x,s}(\xi)dxds+\int_{[0,t]\times D\times (-N,N)}\Psi_{\eta}\partial_{\xi}\varphi dm\\ \notag
          &&
          +\frac{1}{2}\int^t_0\int_D\int^N_{-N}\Big(\psi_{\eta}(N-\xi-\eta)-\psi_{\eta}(N+\xi-\eta)\Big)G^2(x,\xi)\varphi d\nu_{s,x}(\xi)dxds\\ \label{eq-1}
          && -\int_{[0,t]\times D\times (-N,N)}\Big(\psi_{\eta}(N-\xi-\eta)-\psi_{\eta}(N+\xi-\eta)\Big)\varphi dm \quad a.s.
        \end{eqnarray}
and $\bar{f}=I_{u\leq \xi}$ satisfies
\begin{eqnarray}\notag
          &&-\int_D\int^N_{-N}\Psi_{\eta}\bar{f}^+(t)\varphi d\xi dx+\int^t_0\int_D\int^N_{-N}\Psi_{\eta}\bar{f} a(\xi)\cdot \nabla \varphi d\xi dxds\\ \notag
          && +\int_D\int^N_{-N}\Psi_{\eta}\bar{f}_0\varphi d\xi dx+\int^t_0\int_{\partial D}\int^N_{-N}\Psi_{\eta}(-a(\xi)\cdot \mathbf{n}) \tilde{\bar{f}}\varphi d\xi d\sigma ds\\ \notag
          &=& \sum_{k\geq 1}\int^t_0\int_D\int^N_{-N}\Psi_{\eta}g_k(x,\xi)\varphi d\nu_{x,s}(\xi)dxd\beta_k(s)\\ \notag
          &&+\frac{1}{2}\int^t_0\int_D\int^N_{-N}\Psi_{\eta}\partial_{\xi}\varphi G^2(x,\xi)d\nu_{x,s}(\xi)dxds-\int_{[0,t]\times D\times (-N,N)}\Psi_{\eta}\partial_{\xi}\varphi dm\\ \notag
          &&
          -\frac{1}{2}\int^t_0\int_D\int^N_{-N}\Big(\psi_{\eta}(N-\xi-\eta)-\psi_{\eta}(N+\xi-\eta)\Big)G^2(x,\xi)\varphi d\nu_{s,x}(\xi)dxds\\ \label{eq-1-1}
          && +\int_{[0,t]\times D\times (-N,N)}\Big(\psi_{\eta}(N-\xi-\eta)-\psi_{\eta}(N+\xi-\eta)\Big)\varphi dm \quad a.s.
        \end{eqnarray}
where $\nu=-\partial_{\xi}f=\partial_{\xi}\bar{f}=\delta_{u=\xi}$.

    \item[(ii)] $\mathbb{P}-$a.s., for a.e. $(t,x)\in \Sigma$, we have
        \begin{eqnarray}\label{eqq-38}
         -a(\xi)\cdot \mathbf{n}(\bar{x}) \tilde{f}(t,x,\xi)&=&M_Nf_b(t,x,\xi)+\partial_{\xi}\bar{m}^+_N(t,x,\xi), \\
         \label{eqq-39}
         -a(\xi)\cdot \mathbf{n}(\bar{x}) \tilde{\bar{f}}(t,x,\xi)&=&M_N\bar{f}_b(t,x,\xi)-\partial_{\xi}\bar{m}^{-}_N(t,x,\xi)
        \end{eqnarray}
         for a.e. $\xi\in (-N,N)$.

  \end{description}
\end{prp}
We remark that the weak star limit $\tilde{f}^{(\lambda)}$ may depend on the chosen subsequence $\{\gamma_n\}_{n\geq 1}\subset \{\gamma\}_{\gamma>0}$. From now on, when considering $\gamma\rightarrow 0$, we always refer to a subsequence of $\{\gamma_n\}_{n\geq 1}$ converging to $0$.

For any $s\in (0,T)$, replacing the starting point $0$ by $s$, we obtain
\begin{lemma}\label{lem-5}
 For all $0< s<T$ and $\varphi\in C^{\infty}_c(\mathbb{R}^d\times \mathbb{R})$, the function $f=I_{u>\xi}$ associated to the kinetic solution $u$ of (\ref{P-19})-(\ref{P-19-2}) satisfies that
 \begin{eqnarray}\notag
    &&-\int_D\int^N_{-N}\Psi_{\eta}f^+(t)\varphi d\xi dx+\int^t_s\int_D\int^N_{-N}\Psi_{\eta}f a(\xi)\cdot \nabla \varphi d\xi dxdr\\ \notag
          && +\int_D\int^N_{-N}\Psi_{\eta}f^+_s\varphi d\xi dx+\int^t_s\int_{\partial D}\int^N_{-N}\Psi_{\eta}(-a(\xi)\cdot \mathbf{n}) \tilde{f}\varphi d\xi d\sigma dr\\ \notag
          &=& -\sum_{k\geq 1}\int^t_s\int_D\int^N_{-N}\Psi_{\eta}g_k(x,\xi)\varphi d\nu_{x,r}(\xi)dxd\beta_k(r)\\ \notag
          &&-\frac{1}{2}\int^t_s\int_D\int^N_{-N}\Psi_{\eta}\partial_{\xi}\varphi G^2(x,\xi)d\nu_{x,r}(\xi)dxdr+\int_{(s,t]\times D\times (-N,N)}\Psi_{\eta}\partial_{\xi}\varphi dm\\ \notag
          &&
          +\frac{1}{2}\int^t_s\int_D\int^N_{-N}\Big(\psi_{\eta}(N-\xi-\eta)-\psi_{\eta}(N+\xi-\eta)\Big)G^2(x,\xi)\varphi d\nu_{r,x}(\xi)dxdr\\ \label{eee-1}
          && -\int_{(s,t]\times D\times (-N,N)}\Big(\psi_{\eta}(N-\xi-\eta)-\psi_{\eta}(N+\xi-\eta)\Big)\varphi dm \quad a.s.
 \end{eqnarray}
on $[s,T)$.
\end{lemma}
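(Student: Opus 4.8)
The plan is to obtain (\ref{eee-1}) by applying Proposition \ref{prp-7}(i) twice and subtracting. Fix $\varphi\in C^{\infty}_c(\mathbb{R}^d\times\mathbb{R})$, $\eta>0$, $N>0$ and $0<s\le t<T$. Since $s\in(0,T)\subset[0,T)$, identity (\ref{eq-1}) holds with terminal time $s$ on an almost sure event, and it also holds with terminal time $t$ on an almost sure event; on the intersection of these two full-measure events we may subtract the $s$-version from the $t$-version. The key observation is that the datum term $\int_D\int^N_{-N}\Psi_{\eta}f_0\varphi\,d\xi\,dx$ is identical in both identities and therefore cancels, while the limit term $-\int_D\int^N_{-N}\Psi_{\eta}f^+(s)\varphi\,d\xi\,dx$ coming from the $s$-version changes sign under the subtraction and, by the convention $f^{\pm}(s)=f^{s\pm}=f^{\pm}_s$, becomes precisely the ``restarted initial value'' term $+\int_D\int^N_{-N}\Psi_{\eta}f^+_s\varphi\,d\xi\,dx$ appearing in (\ref{eee-1}).

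It then remains to check that every remaining term combines as claimed. The flux term and the boundary term are deterministic time integrals, so $\int^t_0-\int^s_0=\int^t_s$; here the trace $\tilde f$, the outward normal $\mathbf{n}$, the Young measure $\nu=\delta_{u=\xi}$ and the function $G^2$ are all unaffected by the shift of the starting point and carry over verbatim, as do the cutoff contributions built from $\psi_{\eta}(N-\xi-\eta)-\psi_{\eta}(N+\xi-\eta)$. On the right-hand side, the stochastic integrals combine by additivity of the It\^{o} integral, $\int^t_0(\cdot)\,d\beta_k(r)-\int^s_0(\cdot)\,d\beta_k(r)=\int^t_s(\cdot)\,d\beta_k(r)$ almost surely, and the $G^2$-Lebesgue integrals combine likewise. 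Finally, the kinetic-measure integrals in (\ref{eq-1}) are taken over the closed interval $[0,t]$, so $\int_{[0,t]\times D\times(-N,N)}-\int_{[0,s]\times D\times(-N,N)}=\int_{(s,t]\times D\times(-N,N)}$, which is exactly the half-open interval $(s,t]$ occurring in (\ref{eee-1}). Assembling these identities yields (\ref{eee-1}), first for $\mathbb{P}$-almost every $\omega$ and hence, after intersecting over a countable dense set of parameters and using continuity in $t$ of the surviving continuous terms, on $[s,T)$.

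This argument is essentially bookkeeping, so I do not anticipate a genuine obstacle. The one point that must be handled with care is that the subtraction is carried out pointwise in $\omega$, which is legitimate only because Proposition \ref{prp-7}(i) asserts that (\ref{eq-1}) holds for \emph{every} $t\in[0,T)$ on a common almost sure event, not merely for almost every $t$; and one must track which endpoint contributions are swept into the kinetic-measure integrals, so that the difference of domains is the half-open interval $(s,t]$ rather than $[s,t]$ or $(s,t)$.
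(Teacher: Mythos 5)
Your proposal is correct and coincides with what the paper intends: the paper gives no proof of Lemma \ref{lem-5} beyond the remark that one "replaces the starting point $0$ by $s$", and your subtraction of the two instances of (\ref{eq-1}) at terminal times $s$ and $t$ — with the cancellation of the $f_0$ term, the sign flip producing $f^+_s=f^{s+}$, and the half-open interval $(s,t]$ arising from $[0,t]\setminus[0,s]$ (correctly excluding a possible atom of $m$ at time $s$) — is the right way to make that precise.
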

Existence and uniqueness of a solution to (\ref{P-19})-(\ref{P-19-2}) with initial datum $\vartheta\in L^{\infty}(\Omega\times D)$ has been proved by \cite{KN16}. This result can easily be extended to initial data in $L^q_{\omega}L^{q}_x$ for $q$ bigger than the degree of polynomial growth of the flux function $A$ (cf. \cite{KN18}). Precisely the following result can be proved.
\begin{thm}\label{thm-9}
 Under Hypotheses (H1)-(H3), there exists a unique kinetic solution to  (\ref{P-19})-(\ref{P-19-2}) with initial datum $\vartheta\in L^q_{\omega}L^{q}_x$, which has almost surely continuous trajectories in $L^{q}_x$.
\end{thm}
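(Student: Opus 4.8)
The plan is to establish existence and uniqueness separately, and to treat the $L^q$ case by approximating the initial datum with truncated (bounded) data. For \emph{uniqueness}, I would run the doubling-of-variables argument: given two kinetic solutions $u_1,u_2$ with the same datum $\vartheta$, regularize $f_1=I_{u_1>\xi}$ and $\bar f_2=I_{u_2\le\xi}$ in $x$ and $\xi$ using the mollifiers $\rho^\lambda_\gamma$ and $\psi_\delta$ from the Preliminaries, localize on the cover $\{U_{\lambda_i}\}$, apply the strengthened kinetic formulations (\ref{eq-1})--(\ref{eq-1-1}) of Proposition \ref{prp-7} and the boundary identities (\ref{eqq-38})--(\ref{eqq-39}), and compute $\frac{d}{dt}\mathbb{E}\int_D\int_\mathbb{R} f_1(t)\bar f_2(t)\,d\xi dx$ (with the cutoff $\Psi_\eta$ in place). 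The It\^o correction terms from the two martingale parts combine with the $G^2$ terms; the crucial signs are that the kinetic-measure contributions $m_1(\partial_\xi(\cdot)),m_2(\partial_\xi(\cdot))$ have the right (nonpositive) sign after integration by parts, and that the boundary terms are controlled because $-a(\xi)\cdot\mathbf n\,\tilde f$ equals $M_Nf_b+\partial_\xi\bar m^+_N$, i.e. the boundary layer measures $\bar m^\pm_N$ also enter with a good sign. Sending $\gamma,\delta\to0$, then $\eta\to0$ and $N\to\infty$ (using Lemma \ref{lem-4} and the vanishing-at-infinity property (\ref{equ-37})), one arrives at
\begin{eqnarray*}
\mathbb{E}\|u_1(t)-u_2(t)\|_{L^1_x}\le \mathbb{E}\|\vartheta-\vartheta\|_{L^1_x}=0,
\end{eqnarray*}
hence $u_1=u_2$; continuity of trajectories in $L^q_x$ then follows from the kinetic formulation together with (\ref{eqq-31}) and the martingale terms being continuous, via a Kolmogorov-type / generalized It\^o argument.

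For \emph{existence} with $\vartheta\in L^\infty(\Omega\times D)$, I would invoke the result of \cite{KN16} directly. To pass to $\vartheta\in L^q_\omega L^q_x$, set $\vartheta_n:=(\vartheta\wedge n)\vee(-n)\in L^\infty(\Omega\times D)$, let $u_n$ be the corresponding kinetic solutions, and show $\{u_n\}$ is Cauchy. The $L^1$-contraction just proved gives $\mathbb{E}\,\mathrm{ess\,sup}_t\|u_n(t)-u_m(t)\|_{L^1_x}\lesssim \mathbb{E}\|\vartheta_n-\vartheta_m\|_{L^1_x}\to0$, while the a priori bound (\ref{eqq-31}) — which must be established uniformly in $n$, e.g. by applying the generalized It\^o formula of Proposition A.1 in \cite{DHV} to $\|u_n(t)\|^p_{L^p_x}$ and using (H1), (H3) and Gronwall, exactly the point flagged in Remark 2.3 requiring $q>2(q_0+1)$ — provides uniform integrability and allows one to upgrade $L^1$ convergence to convergence in $L^p_{\omega;t}L^p_x$ for all $p<q$. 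One then checks that the limit $u$ is a kinetic solution: the linear terms in (\ref{P-21})--(\ref{P-21-1}) pass to the limit by these convergences (using (H1) to handle $a(\xi)$ against the polynomial growth), the stochastic terms converge in $L^2(\Omega)$ by the It\^o isometry and (H3), and the kinetic measures $m_n$ (respectively boundary measures $\bar m^\pm_{N,n}$), which are bounded in the appropriate spaces by the energy estimate, converge along a subsequence in the weak-$*$ sense to a limit kinetic measure, with (\ref{equ-37}) preserved via Lemma \ref{lem-4}(i) applied uniformly.

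I expect the main obstacle to be the treatment of the boundary terms in the doubling-of-variables computation: unlike the periodic case, the two boundary-layer measures $\bar m^+_N$ and $\bar m^-_N$ differ, so one must carefully localize via the charts $(\mathcal{A}_i,h_i)$, use the regularizations $f^{\lambda,\gamma}$ and their weak-$*$ traces $\tilde f^{(\lambda)}$ from Proposition \ref{prp-7}, and verify that the boundary contributions coming from (\ref{eqq-38})--(\ref{eqq-39}) have a definite sign after pairing $f_1$ with $\bar f_2$ — the term $M_N\int_\Sigma f_b\bar f_{2,b}\,d\xi d\sigma dt$ must be shown nonnegative and the $\partial_\xi\bar m^\pm_N$ pieces nonpositive. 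A secondary technical point is the order of limits $\gamma\to0$, $\delta\to0$, $\eta\to0$, $N\to\infty$, and ensuring the cutoff $\Psi_\eta$ error terms (the $\psi_\eta(N\mp\xi-\eta)$ pieces) vanish, which is where Lemma \ref{lem-4}(ii) is used. Everything else — the interior contraction estimate and the approximation/stability argument — is by now standard in the kinetic-solution literature (\cite{D-V-1,KN16,KN18}) and I would only sketch it.
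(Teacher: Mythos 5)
Your proposal follows essentially the same route as the paper, which does not prove Theorem \ref{thm-9} directly but defers to \cite{KN16} for $L^\infty$ data and to \cite{KN18} for the extension to $L^{q}_{\omega}L^{q}_x$ data via exactly the truncation-plus-$L^1$-contraction argument you describe; the doubling-of-variables computation you outline, including the sign analysis of the kinetic and boundary-layer measures, is the one carried out (in weighted form) in Section 4. The only step treated more lightly than it deserves is the almost-sure continuity of trajectories in $L^{q}_x$, but the paper likewise leaves that claim to the cited references.
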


\subsection{Renormalized kinetic solution}
As
 the invariant measures are living in $L^1_x$, in this part, we need to extend the initial data $u_0$ from $L^{q}_x$ to $L^1_x$. This generalization, the so called $L^1$-theory,
  has been done in several papers. The $L^1_x-$theory for
  the periodic scalar conservation laws driven by stochastic forcing was developed in
   \cite{D-V-2}, which generalized the deterministic results established
   by Chen and Perthame \cite{CP03}.
Later, Noboriguchi \cite{Dai-1} developed the $L^1_x-$theory for periodic stochastic scalar conservation laws driven by multiplicative noise.


To state the $L^1_x-$theory of (\ref{P-19})-(\ref{P-19-2}), we shall extend the notion of solutions from kinetic solutions to renormalized kinetic solutions.
Firstly, we need a weak version of kinetic measures.
 \begin{dfn}(Weak kinetic measure)
 A map $m$ from $\Omega$ to $\mathcal{M}^+_0( D\times [0,T)\times \mathbb{R})$ is said to be a weak kinetic measure if
\begin{description}
  \item[1.] $ m $ is weakly measurable,
  \item[2.] $m$ vanishes at infinity in average:
\begin{eqnarray*}
\lim_{R\rightarrow +\infty}\frac{1}{R}\mathbb{E}[m(D\times [0,T)\times \{\xi\in \mathbb{R}; R\leq |\xi|\leq 2R\})]=0,
\end{eqnarray*}
  \item[3.] for every $\phi\in C_b( D\times \mathbb{R})$, the process
\[
(\omega, t)\rightarrow \int_{ D\times [0,t]\times \mathbb{R}}\phi(x,\xi)dm(x,s,\xi)\in\mathbb{R} \quad {\text{is\ predictable}}.
\]
\end{description}
\end{dfn}
The following is a weak version of kinetic solution called renormalized kinetic solution.
\begin{dfn}(Renormalized kinetic solution)\label{dfn-2}
Let $\vartheta \in L^1_x$. A measurable function $u: \Omega\times [0,T]\times D\rightarrow \mathbb{R}$ is called a renormalized kinetic solution to (\ref{P-19})-(\ref{P-19-2}) with datum $\vartheta$, if
\begin{description}
  \item[1.] $u\in L^1_{\omega;t}L^1_x$ and
\begin{eqnarray}
\mathbb{E}\ \underset{0\leq t\leq T}{{\rm{ess\sup}}}\ \|u(t)\|_{L^1_x}<+\infty,
\end{eqnarray}
\item[2.] there exists a weak kinetic measure $m$ and if, for any $N>0$, there exist non-negative functions
 $\bar{m}^{\pm}_N \in L^1(\Omega\times \Sigma\times (-N,N))$ such that $\{\bar{m}^{\pm}_N(t)\}$ are predictable,
 \begin{eqnarray*}
    \underset{\xi\uparrow N}{{\rm{\lim}}}\ \bar{m}^{+}_N(t,x,\xi)=\underset{\xi\downarrow -N}{{\rm{\lim}}}\ \bar{m}^{-}_N(t,x,\xi)=0,
   \end{eqnarray*}
for all $\varphi\in C^{\infty}_c([0,T)\times \bar{D}\times (-N,N))$, $f:=I_{u>\xi}$ satisfies
    \begin{eqnarray}\notag
&&\int^T_0\langle f(t), \partial_t \varphi(t)\rangle dt+\langle f_0, \varphi(0)\rangle +\int^T_0\langle f(t), a(\xi)\cdot \nabla \varphi (t)\rangle dt+M_N\int_{\Sigma\times \mathbb{R}}f_b\varphi d\xi d\sigma(x)dt\\ \notag
&=& -\sum_{k\geq 1}\int^T_0\int_{ D} g_k(x,u(t,x))\varphi (t,x,u(t,x))dxd\beta_k(t) \\ \notag
&& -\frac{1}{2}\int^T_0\int_{ D}\partial_{\xi}\varphi (t,x,u(t,x))G^2(x,u(t,x))dxdt\\ \label{P-22}
&&+\int_{[0,T)\times D\times \mathbb{R}} \partial_{\xi} \varphi dm+\int_{\Sigma\times \mathbb{R}} \partial_{\xi} \varphi \bar{m}^+_Nd\xi d\sigma(x)dt, \quad a.s. ,
\end{eqnarray}
and $\bar{f}:=1-f=I_{u\leq\xi}$ satisfies
\begin{eqnarray}\notag
&&\int^T_0\langle \bar{f}(t), \partial_t \varphi(t)\rangle dt+\langle \bar{f}_0, \varphi(0)\rangle +\int^T_0\langle \bar{f}(t), a(\xi)\cdot \nabla \varphi (t)\rangle dt+M_N\int_{\Sigma\times \mathbb{R}}\bar{f}_b\varphi d\xi d\sigma(x)dt\\ \notag
&=& \sum_{k\geq 1}\int^T_0\int_{ D} g_k(x,u(t,x))\varphi (t,x,u(t,x))dxd\beta_k(t) \\ \notag
&& +\frac{1}{2}\int^T_0\int_{D}\partial_{\xi}\varphi (t,x,u(t,x))G^2(x,u(t,x))dxdt\\ \label{P-23}
&&-\int_{[0,T)\times D\times \mathbb{R}} \partial_{\xi} \varphi dm-\int_{\Sigma\times \mathbb{R}} \partial_{\xi} \varphi \bar{m}^-_Nd\xi d\sigma(x)dt, \quad a.s. .
\end{eqnarray}
\end{description}
\end{dfn}


\section{Statement of main results}
In this section, we state the main results whose proofs are given in Sections 4, 5, and 6.
\subsection{The contraction inequality in the weighted space}
From Theorem \ref{thm-9}, we know that for any initial datum $\vartheta\in L^{q}_{\omega}L^{q}_{x}$, (\ref{P-19})-(\ref{P-19-2}) admits a unique kinetic solution $u(t; \vartheta)\in L^{q}_{\omega}L^{q}_{x}$ for almost all $t\in [0,T)$. Our first result reads as follows.
\begin{thm}\label{thm-8}
Let $u(t;\vartheta)$ and $u(t;\tilde{\vartheta})$ are kinetic solutions of $\mathcal{E}(A,\Phi,\vartheta)$ and $\mathcal{E}(A,\Phi,\tilde{\vartheta})$, respectively. Under Hypotheses (H1)-(H3),
\begin{eqnarray}\label{ee-34}
 \underset{0\leq t\leq T}{{\rm{ess\sup}}}\ \mathbb{E}\|u(t;\vartheta)-u(t;\tilde{\vartheta})\|_{L^1_{w;x}}\leq \mathbb{E}\|\vartheta-\tilde{\vartheta}\|_{L^1_{w;x}}.
\end{eqnarray}
\end{thm}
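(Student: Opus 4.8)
The plan is to prove the $L^1_{w;x}$-contraction estimate \eqref{ee-34} by the doubling-of-variables method applied to the two kinetic solutions $u(t;\vartheta)$ and $u(t;\tilde\vartheta)$, adapted to the weighted space. First I would write down, for the solution $u$, the microscopic kinetic equation satisfied by $f=I_{u>\xi}$ in the strengthened form of Proposition \ref{prp-7}/Lemma \ref{lem-5} (weak only in $x,\xi$), and for the solution $\tilde u$ the corresponding equation satisfied by $\bar{\tilde f}=I_{\tilde u\le\xi}$. The quantity to control is
\[
\int_D\int_{\mathbb{R}} f^+(t,x,\xi)\,\bar{\tilde f}^+(t,x,\xi)\,w(x)\,d\xi\,dx,
\]
which, after integrating in $\xi$, equals $\tfrac12\|u(t;\vartheta)-u(t;\tilde\vartheta)\|_{L^1_{w;x}}$ up to the usual identity $\int_{\mathbb{R}}(I_{u>\xi}I_{\tilde u\le\xi}+I_{u\le\xi}I_{\tilde u>\xi})\,d\xi=|u-\tilde u|$; so I would symmetrize and work with both cross terms. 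The test function is built by taking $\varphi(x,\xi)$ of the form $\rho_\varepsilon(x-y)\phi_\delta(\xi-\zeta)w(x)$ (localized via the partition of unity $\{\lambda_i\}$ and the mollifiers $\rho^\lambda_\gamma,\psi_\delta$ introduced in Section 2) against the equation for $f$ in the variables $(x,\xi)$ and an analogous one against the equation for $\bar{\tilde f}$ in $(y,\zeta)$, then multiplying, integrating, and passing to the diagonal $y\to x$, $\zeta\to\xi$.

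The key point — and the reason the weight $w$ is chosen affine, $w(x)=-(x_1+\cdots+x_d)+C_0$ — is the treatment of the transport/flux term. When one differentiates the product against $a(\xi)\cdot\nabla_x(\rho_\varepsilon(x-y)w(x))$, the $\nabla\rho_\varepsilon$ part cancels against the $y$-derivative term as usual in the doubling method (it does not see the weight since $\rho_\varepsilon(x-y)$ depends only on $x-y$), but the $\nabla_x w$ part survives and produces, after sending $\varepsilon,\delta,\gamma\to 0$, an extra term of the form
\[
-\int_0^t\!\!\int_D \big(a(\xi)\cdot\nabla w(x)\big)\,\mathbf 1_{\{u\,\wedge\,\tilde u<\xi<u\,\vee\,\tilde u\}}\,d\xi\,dx\,ds .
\]
Since $\nabla w=(-1,\dots,-1)$ and each $a_j=A_j'\ge 0$ by (H1), this term is nonpositive, so it can be discarded to obtain \eqref{ee-34} (and, crucially for later sections, retained as the strictly negative term in the "super-contraction" principle). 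The parabolic dissipation measures $m,\tilde m$ contribute $-\iint \partial_\xi(\text{test})\,dm$ terms which, as in Debussche–Vovelle and Kobayasi–Noboriguchi, have a favorable sign in the limit; the boundary terms on $\Sigma$ involving $f_b=I_{0>\xi}$, $\bar f_b$, $\tilde f$, $\bar{\tilde f}$ and $\bar m^\pm_N$ must be shown to have the right sign using the boundary relations \eqref{eqq-38}–\eqref{eqq-39} together with $w>0$ on $\partial D$ — this is where the Dirichlet boundary condition enters and where the odd/strictly-increasing structure of $A_j$ in (H1) is used to control $-a(\xi)\cdot\mathbf n\,\tilde f$ at the boundary. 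The Itô correction terms ($\tfrac12\partial_\xi\varphi\,G^2$) and the stochastic integrals are handled exactly as in the periodic case: the stochastic integrals have zero expectation after noting predictability, and the two $G^2$-terms combined with the bracket of the difference of the two noise terms produce, via (H3)'s Lipschitz bound \eqref{equ-29}, a term bounded by $C\int_0^t \mathbb{E}\|u(s;\vartheta)-u(s;\tilde\vartheta)\|_{L^1_{w;x}}\,ds$ after integrating in $\xi$ — in fact with the diagonal choice of mollifier one gets that this term tends to the "diagonal" contribution which is controlled because the noise coefficients agree ($\Phi=\Phi$); one must be slightly careful that it does not blow up and closes by Gronwall, or vanishes entirely in the symmetric cancellation.

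In the order I would carry it out: (1) regularize both kinetic (microscopic) equations by convolution in $x$ (within each chart $D^\lambda$) and in $\xi$, obtaining equations valid pointwise in the regularized variables with error terms that vanish as $\gamma,\delta\to0$; (2) apply Itô's formula to the product $f^{\lambda,\gamma}(t,x,\xi)\,\bar{\tilde f}^{\lambda,\gamma}(t,x,\xi)$ (legitimate since $\bar{\tilde f}$ satisfies an equation with a continuous martingale part and finite variation part), integrate against $w(x)\,dx\,d\xi$ and take expectations to kill the martingale; (3) pass to the limit $\gamma\to0$, then $\delta\to0$, then remove the $N$-cutoff $\Psi_\eta$ and send $\eta\to0$, $N\to\infty$ using Lemma \ref{lem-4} to discard the cutoff error terms and the vanishing-at-infinity of $m$; (4) identify the surviving terms: the time-derivative term gives the left side, the flux term gives the nonpositive weighted drift $-\int(a(\xi)\cdot\nabla w)\mathbf 1_{\{\cdots\}}$, the measure terms are $\le 0$, the boundary terms are $\le 0$, and the noise terms combine to something controlled by $C\int_0^t\mathbb E\|u(s)-\tilde u(s)\|_{L^1_{w;x}}ds$; (5) conclude by Gronwall (here trivially, since for equal coefficients the noise contribution in fact cancels, giving the bound with constant $1$ and no exponential).

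The main obstacle I anticipate is step (3)–(4): making the limit passage rigorous through the boundary terms and showing the boundary contributions from $\bar m^\pm_N$ and the traces $\tilde f,\tilde{\bar f}$ have the correct sign after integrating against the weight — this is the technically delicate Dirichlet-specific part, absent in the periodic theory of Debussche–Vovelle, and it is where the hypotheses that each $A_j$ is odd, strictly increasing with $a_j\ge0$ (so that $\nabla w\cdot a(\xi)\le0$ everywhere) are exactly what is needed. A secondary subtlety is ensuring the doubling-method error terms coming from $\nabla\rho_\varepsilon\cdot\nabla w$ cross terms and from the non-commutation of the spatial mollification with the chart changes $\mathcal A_i$ vanish in the limit; this is handled as in \cite{KN16} by the uniform polynomial-growth bound \eqref{eqq-31} on $u$ and the Lipschitz continuity \eqref{equ-29} of the $g_k$.
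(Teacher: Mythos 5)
Your proposal follows essentially the same route as the paper: the doubling-of-variables computation in the weighted space (Proposition \ref{prp-1}), with the extra flux term $-\sum_j a_j(\xi)$ produced by $a(\xi)\cdot\nabla_x w$ being exactly the nonpositive term that the paper keeps as $-\sum_j\mathbb{E}\int_0^t\int_D|A_j(u_1)-A_j(u_2)|\,dx\,ds$ in the super-contraction principle (Theorem \ref{thm-1}), the boundary contributions controlled via the trace relations of \cite{KN16}, and the noise cross-terms combining into $\tfrac12\sum_k|g_k(x,\xi)-g_k(y,\zeta)|^2$ which is $O(\gamma^2\delta^{-1}+\delta)$ on the mollifier support. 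Your only unnecessary hedge is the Gronwall step: since the coefficients agree, the noise term vanishes outright in the limit $\delta=\gamma^{4/3}\to0$ and no Gronwall factor is needed, exactly as you suspect in your final parenthetical.
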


\subsection{The continuous extension in the weighted space}\label{sec-4}
In this part, we state the various extensions of the  kinetic solutions of (\ref{P-19})-(\ref{P-19-2}).
\begin{thm}\label{thm-10}
Assume Hypotheses (H1)-(H3) hold. If $u$ is a kinetic solution to (\ref{P-19})-(\ref{P-19-2}), then  $u\in C([0,T];L^1_{\omega}L^1_{w;x})$.
\end{thm}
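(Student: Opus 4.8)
The plan is to first establish the result for kinetic solutions with regular initial data (say $\vartheta\in L^q_\omega L^q_x$) by a vanishing viscosity approximation, and then pass to the limit. First I would introduce, for $\epsilon>0$, the parabolic approximation
\[
du^\epsilon+\mathrm{div}(A(u^\epsilon))\,dt=\epsilon\Delta u^\epsilon\,dt+\Phi(u^\epsilon)\,dW(t)\quad\text{in }D\times(0,T),
\]
with the same Dirichlet boundary condition and initial datum $\vartheta$. Because of the viscous term, $u^\epsilon$ has genuine pathwise regularity and in particular lies in $C([0,T];L^2_x)\subset C([0,T];L^1_{w;x})$ almost surely, and one can apply a genuine (not merely kinetic) It\^o formula. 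The first key step is then to derive a uniform-in-$\epsilon$ estimate for the modulus of continuity of $t\mapsto u^\epsilon(t)$ in $L^1_\omega L^1_{w;x}$: apply It\^o's formula to a smooth convex approximation $\beta_\delta(\cdot)$ of $|\cdot|$ paired against the weight $w$, i.e. to $\int_D \beta_\delta(u^\epsilon(t,x)-u^\epsilon(s,x))w(x)\,dx$, or more directly estimate $\mathbb{E}\|u^\epsilon(t)-u^\epsilon(s)\|_{L^1_{w;x}}$ using the equation. The drift term $\mathrm{div}(A(u^\epsilon))$ tested against $w\,\mathrm{sgn}(\cdot)$ produces, after integration by parts, a term controlled by $|t-s|$ times moments of $A(u^\epsilon)$ (finite by the polynomial growth in (H1) together with the uniform moment bound (3.3)/(\ref{eqq-31})); the viscous term $\epsilon\Delta u^\epsilon$ contributes a term that is nonnegative after the right manipulation (using convexity and $w>0$) and in any case vanishes or stays bounded; the martingale part is handled via Burkholder–Davis–Gundy and (H3)'s bound $G^2\le C_0(1+|u|^2)$. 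The outcome should be an estimate of the form $\mathbb{E}\|u^\epsilon(t)-u^\epsilon(s)\|_{L^1_{w;x}}\le C|t-s|^{\theta}$ for some $\theta>0$, uniformly in $\epsilon$.

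The second key step is to pass to the limit $\epsilon\to0$. One knows from the well-posedness theory (Theorem \ref{thm-9} and the references \cite{KN16,KN18}, which construct the kinetic solution precisely via vanishing viscosity) that $u^\epsilon\to u$ in, say, $L^1(\Omega\times(0,T)\times D)$, hence along a subsequence for a.e. $t$ in $L^1_\omega L^1_{w;x}$. Combining this with the uniform modulus-of-continuity estimate from Step 1, a standard Arzel\`a–Ascoli / completeness argument in the Banach space $L^1_\omega L^1_{w;x}$ shows that the a.e.-defined map $t\mapsto u(t)$ agrees a.e. with a function in $C([0,T];L^1_\omega L^1_{w;x})$, and that this continuous representative inherits the H\"older modulus. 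Since $L^1_{w;x}$ is equivalent to $L^1_x$ (as noted after (\ref{eee-12})), this is exactly the claim. Finally, for general initial data in $L^q_\omega L^q_x$ no further step is needed, but if one wanted the statement also for renormalized solutions (Definition \ref{dfn-2}) one would approximate $\vartheta\in L^1_x$ by truncations $\vartheta_n\in L^q_x$, use the $L^1_{w;x}$-contraction from Theorem \ref{thm-8} to get that $u(\cdot;\vartheta_n)$ is Cauchy in $C([0,T];L^1_\omega L^1_{w;x})$ uniformly in $t$, and pass to the limit.

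The main obstacle I anticipate is Step 1: obtaining the modulus-of-continuity estimate \emph{uniformly in $\epsilon$} while handling the boundary. Testing against $w\,\mathrm{sgn}(u^\epsilon(t)-u^\epsilon(s))$ is formal — one must regularize $\mathrm{sgn}$ by $\beta_\delta'$, control the defect terms as $\delta\to0$, and crucially check that the Dirichlet boundary condition does not generate an uncontrolled boundary contribution when integrating $\mathrm{div}(A(u^\epsilon))$ and $\epsilon\Delta u^\epsilon$ by parts against $w$ (here the specific affine form of $w$ in (\ref{eee-12}), with $\nabla w$ constant, and the sign of $a_j\ge0$ from (H1), should be what makes the boundary and gradient-of-weight terms have the right sign or be absorbable). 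A secondary technical point is justifying the It\^o formula for $\|u^\epsilon(t)-u^\epsilon(s)\|_{L^1_{w;x}}$-type functionals, which requires the generalized It\^o formula of \cite{DHV} and is exactly where the moment assumption $q>2(q_0+1)$ in (H2) enters, as flagged in Remark 2.3.
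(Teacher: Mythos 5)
Your overall strategy -- vanishing viscosity, a uniform-in-viscosity equicontinuity estimate in $L^1_{\omega}L^1_{w;x}$, then passage to the limit along the a.e.\ convergence $u^{\epsilon}\to u$ -- is exactly the architecture of the paper's proof (which uses $\epsilon=1/n$, proves the convergence $u^n\to u$ in $L^1(\Omega\times[0,T];L^1_{w;x})$ as Proposition \ref{prp-12}, and concludes by uniform continuity on a full-measure set $\mathcal{I}$). However, your Step 1 as written has a genuine gap. You propose to apply It\^o's formula to $\int_D\beta_{\delta}\big(u^{\epsilon}(t,x)-u^{\epsilon}(s,x)\big)w(x)\,dx$, i.e.\ to compare the two time slices \emph{at the same spatial point}, and you claim the flux term is controlled, after integration by parts, by $|t-s|$ times moments of $A(u^{\epsilon})$. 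This is not so: writing $\mathrm{div}\,A(u^{\epsilon}(r))=a(u^{\epsilon}(r))\cdot\nabla u^{\epsilon}(r)$ and trying to absorb $\beta_{\delta}'\big(u^{\epsilon}(r,x)-u^{\epsilon}(s,x)\big)a(u^{\epsilon}(r,x))\cdot\nabla u^{\epsilon}(r,x)$ into a divergence $\mathrm{div}_x H(x,u^{\epsilon}(r,x))$ with $H(x,\xi)=\int_0^{\xi}\beta_{\delta}'(\zeta-u^{\epsilon}(s,x))a(\zeta)\,d\zeta$ produces a commutator term proportional to $\nabla_x u^{\epsilon}(s,x)$ (and similarly the viscous term produces a cross term $\nabla u^{\epsilon}(r)\cdot\nabla u^{\epsilon}(s)$ that is not sign-definite). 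The only available bound on $\nabla u^{\epsilon}(s,\cdot)$ is (\ref{rr-5-1}), which gives $\epsilon\,\mathbb{E}\int_0^T\|\nabla u^{\epsilon}\|_{L^2_x}^2\,dt\le C$, i.e.\ the frozen-time gradient blows up like $\epsilon^{-1/2}$; so the resulting estimate is not uniform in $\epsilon$ and the whole Step 1 collapses. The obstacle is not the boundary (which you flagged) but the spatial irregularity of the frozen slice $u^{\epsilon}(s,\cdot)$.

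The paper circumvents precisely this by doubling the \emph{spatial} variable as well: it estimates $\mathbb{E}\|u^n(t)-u^n(t')\|_{L^1_{w;x}}\le K^n_1(\gamma)+K^n_2(\gamma)$, where $K^n_1$ compares $u^n(t,x)$ with $u^n(t',y)$ through the mollifier $\rho^{\lambda}_{\gamma}(y-x)$ -- so that the frozen datum $u^n(t',y)$ is constant in $x$ and the flux term integrates by parts onto $\rho^{\lambda}_{\gamma}$, costing only factors $\gamma^{-1}\varepsilon^{-1}|t-t'|$ -- and $K^n_2$ is the spatial modulus of continuity of $u^n(t',\cdot)$ in $L^1_{w;x}$. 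The price is that $K^n_2(\gamma)$ must be shown to vanish as $\gamma\to0$ \emph{uniformly in $n$}; this is done not with gradient bounds but with the kinetic doubling-of-variables estimate for the viscous equation (Lemma \ref{lem-3}), reducing it to quantities like $\mathcal{E}^{N,\lambda}_0$, $r^{N,\lambda}_{t'}$, $\tilde r^{N,\lambda}_{t'}$ that vanish as $\gamma,\delta\to0$. If you want to repair your argument, you need to insert this spatial mollification and the accompanying uniform control of $K^n_2$; without it the scheme does not close. Your final remarks about passing from the equicontinuity on $\mathcal{I}$ to a continuous representative, and about extending to $L^1$ data via the contraction (\ref{ee-34}), are fine and match Proposition \ref{prpo-1}.
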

Then,
with the help of (\ref{ee-34}) and Theorem \ref{thm-10}, we derive the following extension of $u$ with respect to the initial value. That is,
\begin{prp}\label{prpo-1}
  Under Hypotheses (H1)-(H3), the mapping
  \[
  L^{q}_{\omega}L^{q}_x\ni \vartheta\mapsto u(\cdot; \vartheta)\in C([0,T];L^{1}_{\omega}L^{1}_{w;x})
  \]
  extends uniquely to a continuous map $v$ from $L^{1}_{w;x}$  to $C([0,T];L^{1}_{\omega}L^{1}_{w;x})$. Furthermore, for all $\vartheta, \tilde{\vartheta}\in L^{1}_{w;x}$,
  \begin{eqnarray}\label{e-43}
   \sup_{t\in [0,T]}\mathbb{E}\|v(t;\vartheta)-v(t;\tilde{\vartheta})\|_{L^1_{w;x}}\leq \mathbb{E}\|\vartheta-\tilde{\vartheta}\|_{L^1_{w;x}}.
  \end{eqnarray}
\end{prp}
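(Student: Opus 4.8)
The plan is a standard density/completeness argument. The space $C([0,T];L^1_\omega L^1_{w;x})$ is a complete metric space (it is a Banach space, with norm $\sup_{t\in[0,T]}\mathbb{E}\|\cdot(t)\|_{L^1_{w;x}}$), and by Theorem \ref{thm-8} the map $\vartheta\mapsto u(\cdot;\vartheta)$, defined on the dense subset $L^q_\omega L^q_x$ of $L^1_\omega L^1_{w;x}$, is Lipschitz (indeed a contraction) for this norm; so it extends uniquely by continuity to the closure. The subtlety is that the domain in the statement is not $L^1_\omega L^1_{w;x}$ but the \emph{deterministic} space $L^1_{w;x}$ — i.e. we restrict attention to (nonrandom) initial data. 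First I would fix $\vartheta\in L^1_{w;x}$ and pick a sequence $\vartheta_n\in L^q_\omega L^q_x$ (one may even take $\vartheta_n\in L^q_x$ deterministic, e.g. truncations $\vartheta_n=(\vartheta\wedge n)\vee(-n)$, which lie in $L^\infty_x\subset L^q_x$ since $D$ is bounded) with $\|\vartheta_n-\vartheta\|_{L^1_{w;x}}\to 0$; then $\mathbb{E}\|\vartheta_n-\vartheta_m\|_{L^1_{w;x}}=\|\vartheta_n-\vartheta_m\|_{L^1_{w;x}}\to 0$, so by \eqref{ee-34} the sequence $u(\cdot;\vartheta_n)$ is Cauchy in $C([0,T];L^1_\omega L^1_{w;x})$ and converges to a limit which I define to be $v(\cdot;\vartheta)$. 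Independence of the limit from the approximating sequence is immediate from \eqref{ee-34} applied to two such sequences.

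Next I would verify that $v$ is well-defined on all of $L^1_{w;x}$, consistent with $u$ on $L^q_\omega L^q_x\cap L^1_{w;x}$ (trivially, since for such $\vartheta$ the constant sequence works and $u(\cdot;\vartheta)\in C([0,T];L^1_\omega L^1_{w;x})$ by Theorem \ref{thm-10}), and that the estimate \eqref{e-43} passes to the limit: for $\vartheta,\tilde\vartheta\in L^1_{w;x}$ with approximations $\vartheta_n,\tilde\vartheta_n$, apply \eqref{ee-34} to $u(\cdot;\vartheta_n)$ and $u(\cdot;\tilde\vartheta_n)$ and let $n\to\infty$, using that $\sup_{t}\mathbb{E}\|\cdot\|_{L^1_{w;x}}$-convergence implies convergence of $\sup_{t}\mathbb{E}\|u(t;\vartheta_n)-u(t;\tilde\vartheta_n)\|_{L^1_{w;x}}$ (the triangle inequality gives $|\,\sup_t\mathbb{E}\|u(t;\vartheta_n)-u(t;\tilde\vartheta_n)\|-\sup_t\mathbb{E}\|v(t;\vartheta)-v(t;\tilde\vartheta)\|\,|$ bounded by the two one-sided convergences). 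Uniqueness of the continuous extension is then automatic: any two continuous extensions agree on the dense set $L^q_\omega L^q_x\cap L^1_{w;x}$ (which contains, e.g., $L^\infty_x$, dense in $L^1_{w;x}$), hence everywhere.

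The one genuine point requiring care — and the main obstacle — is that the objects $v(\cdot;\vartheta)$ produced by this abstract limit are a priori just elements of the Banach space $C([0,T];L^1_\omega L^1_{w;x})$, and one must check they are the "right" objects, i.e. that for each $t$, $v(t;\vartheta)$ is (the equivalence class of) an actual measurable function $\Omega\times D\to\mathbb{R}$ arising as a renormalized kinetic solution in the sense of Definition \ref{dfn-2}. Concretely: one needs to extract an a.e.-convergent subsequence $u(t;\vartheta_{n_k})\to v(t;\vartheta)$ (possible since $L^1$-convergence yields a.e.-convergent subsequences, combined with a diagonal argument over a countable dense set of times and the time-continuity), identify the limiting Young measure $\nu_{t,x}=\delta_{v(t;\vartheta)=\xi}$, pass to the limit in the kinetic formulation \eqref{P-22}--\eqref{P-23} (the stochastic integral terms via $L^2(\Omega)$ isometry and Hypothesis (H4)/(H3), the flux terms via dominated convergence using the polynomial-growth bound \eqref{qeq-22}, and the kinetic/boundary measures via weak-$\ast$ compactness of bounded nonnegative measures together with the uniform-integrability bounds that come from \eqref{equ-37}-type controls), and check the vanishing-at-infinity conditions survive. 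Much of this machinery is already in place in the proof of Theorem \ref{thm-10} (continuity extension) and in \cite{KN16,Dai-1}, so I would carry it out by invoking those results; the remaining work is bookkeeping to ensure the limit is independent of all choices. If one is content to \emph{define} $v(\cdot;\vartheta)$ as the abstract Banach-space limit and only later (in Section 6, where the Markov semigroup and invariant measure are constructed) identify it with a renormalized kinetic solution, then the proof of Proposition \ref{prpo-1} itself reduces to the routine completeness argument sketched above, and the main obstacle is deferred.
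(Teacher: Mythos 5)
Your proposal is correct and follows essentially the same route as the paper: the paper's proof is exactly the density/completeness argument, approximating $\vartheta\in L^1_{w;x}$ by $\vartheta_n\in L^q_x$, invoking Theorem \ref{thm-10} for continuity of each $u(\cdot;\vartheta_n)$ and Theorem \ref{thm-8} for the Cauchy property, and passing \eqref{ee-34} to the limit. Your closing concern — identifying the abstract limit $v$ as a renormalized kinetic solution — is indeed deferred by the paper to the separate Proposition \ref{prp-16}, so it is not part of the proof of this statement.
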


  Using a similar method as in the proof of Proposition 3.2 in \cite{Dai-1}, the following can be proved.
\begin{prp}\label{prp-16}
Assume Hypotheses (H1) and (H4) hold. Then the extension $v(t;\vartheta)$ established by Proposition \ref{prpo-1} is the unique renormalized kinetic solution to (\ref{P-19})-(\ref{P-19-2}) on $[0,T]$ in the sense of Definition \ref{dfn-2}.
\end{prp}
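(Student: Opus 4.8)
The plan is to upgrade the continuous extension $v(t;\vartheta)$ from Proposition \ref{prpo-1} to a genuine renormalized kinetic solution in the sense of Definition \ref{dfn-2}, following the approximation scheme of Proposition 3.2 in \cite{Dai-1} but adapted to the Dirichlet boundary setting of (\ref{P-19})-(\ref{P-19-2}). First I would fix $\vartheta\in L^1_{w;x}$ and choose a truncation sequence $\vartheta_n:=(\vartheta\wedge n)\vee(-n)$, or more simply $\vartheta_n:=\vartheta\,I_{|\vartheta|\le n}$, so that $\vartheta_n\in L^\infty(\Omega\times D)\subset L^q_\omega L^q_x$ for every $q$ and $\vartheta_n\to\vartheta$ in $L^1_{w;x}$ (hence in $L^1_x$). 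By Theorem \ref{thm-9} each $\vartheta_n$ generates a unique kinetic solution $u^n=u(\cdot;\vartheta_n)$, which by Theorem \ref{thm-10} lies in $C([0,T];L^1_\omega L^1_{w;x})$; by the contraction estimate (\ref{e-43}) the sequence $u^n$ is Cauchy in $C([0,T];L^1_\omega L^1_{w;x})$, and its limit is exactly $v(\cdot;\vartheta)$. Associated to each $u^n$ is a kinetic measure $m^n$ (a fortiori a weak kinetic measure) and boundary functions $\bar m^{n,\pm}_N$, and each satisfies the kinetic identities (\ref{P-21})-(\ref{P-21-1}).

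The core of the argument is to pass to the limit $n\to\infty$ in these identities. For the drift, stochastic-integral and $G^2$ terms this is routine: $u^n\to u:=v(\cdot;\vartheta)$ in $L^1_{\omega;t}L^1_x$ along a subsequence gives convergence $dt\otimes dx\otimes\mathbb P$-a.e. (after extracting), and Hypotheses (H1), (H4) (in particular $|g_k(x,u)|\le C_k(1+|u|)$ with $\sum_k C_k^2<\infty$, which gives the $L^1$-type bounds needed here, unlike (H3) which only controls the $L^2$ sum) together with the uniform bound $\mathbb E\,\mathrm{ess\,sup}_t\|u^n(t)\|_{L^1_x}\le C$ allow one to invoke dominated/Vitali convergence; the stochastic term is handled by the Burkholder-Davis-Gundy inequality and the $L^1$ bound on $u^n$. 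The delicate points are (i) the kinetic measures: I would show $\sup_n\mathbb E\, m^n(D\times[0,T)\times B_R^c)/R$ stays controlled, or rather that $\{m^n\}$ is tight/bounded in a suitable sense so that, up to a subsequence, $m^n\rightharpoonup m$ weakly-$*$ as measures and the limit $m$ is a weak kinetic measure — here the passage from "vanishes at infinity" to "vanishes at infinity in average" is exactly what allows the weaker $L^1$ truncation to survive; and (ii) the boundary terms $M_N\int_{\Sigma\times\mathbb R}f^n_b\varphi$ and $\int_{\Sigma\times\mathbb R}\partial_\xi\varphi\,\bar m^{n,\pm}_N$, where one must control the boundary kinetic defect measures uniformly in $n$ and identify their limits. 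For the latter I would use the localization/partition-of-unity machinery recalled before Proposition \ref{prp-7}, together with the boundary trace identities (\ref{eqq-38})-(\ref{eqq-39}), which express $\partial_\xi\bar m^{n,\pm}_N$ in terms of the weak traces $\tilde f^n$, $\tilde{\bar f}^n$; uniform $L^1(\Omega\times\Sigma\times(-N,N))$ bounds on $\bar m^{n,\pm}_N$ then follow from the a priori estimates, and weak compactness yields limits $\bar m^{\pm}_N$ with the required predictability and vanishing-at-$N$ properties.

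After passing to the limit, I would verify that the triple $(v(\cdot;\vartheta), m, \bar m^{\pm}_N)$ satisfies Definition \ref{dfn-2}: the integrability $v\in L^1_{\omega;t}L^1_x$ with $\mathbb E\,\mathrm{ess\,sup}_t\|v(t)\|_{L^1_x}<\infty$ is inherited from the uniform bounds and the convergence in $C([0,T];L^1_\omega L^1_{w;x})$ (using equivalence of $L^1_{w;x}$ and $L^1_x$); $m$ is a weak kinetic measure by (i); the functions $\bar m^\pm_N$ have the stated properties by (ii); and the identities (\ref{P-22})-(\ref{P-23}) hold by the limit passage. Uniqueness of the renormalized kinetic solution then reduces to the contraction principle: any two renormalized kinetic solutions with the same datum satisfy an $L^1_{w;x}$-contraction (this is the content of Theorem \ref{thm-8} extended to the renormalized level, obtained by the doubling-of-variables argument of Section 4 applied to renormalized solutions, or by approximating them by kinetic solutions as above), so they coincide; in particular the limit is independent of the truncation sequence and equals $v(\cdot;\vartheta)$.

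I expect the main obstacle to be step (ii): controlling the Dirichlet boundary kinetic defect measures $\bar m^{n,\pm}_N$ uniformly in $n$ and identifying their weak limits, because the boundary contributions are genuinely new relative to \cite{Dai-1} (which treats periodic boundary conditions and hence has no such terms) and because the weak traces $\tilde f^n$ are only defined up to subsequences in the weak-$*$ topology. Reconciling the subsequence extractions for $u^n$, $m^n$, $\bar m^{n,\pm}_N$ and the traces simultaneously — so that all limits are compatible and the limiting boundary identities (\ref{eqq-38})-(\ref{eqq-39}) still hold for $v(\cdot;\vartheta)$ — will require some care, using the uniqueness of the limit (guaranteed a posteriori by the contraction estimate) to conclude that the full sequences converge.
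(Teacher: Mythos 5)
Your proposal is correct and follows essentially the same route the paper takes: the paper gives no detailed argument for Proposition \ref{prp-16}, deferring entirely to the approximation scheme of Proposition 3.2 in \cite{Dai-1} (truncation of the datum, passage to the limit in the kinetic formulation under (H4), and uniqueness via the $L^1_{w;x}$-contraction), which is exactly the plan you lay out. Your identification of the Dirichlet boundary defect measures $\bar m^{n,\pm}_N$ and their weak traces as the genuinely new difficulty relative to the periodic setting of \cite{Dai-1} is apt and is precisely the point the paper leaves implicit.
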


\subsection{Ergodicity for renormalized kinetic solutions}
Let $B_b(L^1_{w;x})$ be the space of bounded measurable functions from $L^1_{w;x}$ to $\mathbb{R}$ and $C_b(L^1_{w;x})$
 the space of continuous bounded measurable functions from $L^1_{w;x}$ to $\mathbb{R}$. From Proposition \ref{prp-16}, we know that for any $\vartheta\in L^1_{\omega}L^1_{w;x}$, the extension $v(t;\vartheta)$ defined by Proposition \ref{prpo-1} is the unique renormalized kinetic solution to (\ref{P-19})-(\ref{P-19-2}) on $(0,T]$.
Now, we can define the Markovian semigroup associated with $v(t;\vartheta)$ as follows
\begin{dfn}
For any $t\geq 0$, define $P_t: B_b(L^1_{w;x})\rightarrow B_b(L^1_{w;x})$ by
  \[
  P_tF(\vartheta):=\mathbb{E}F(v(t;\vartheta)), \quad F\in B_b(L^1_{w;x}),\ \vartheta\in L^1_{w;x}.
  \]
\end{dfn}

\begin{prp}\label{prp-4}(Feller)
Assume Hypotheses (H1), (H2) and (H4) are in force. Then the family $(P_t)_{t\geq 0}$ is a Feller semigroup, that is, $P_t$ maps $C_b(L^{1}_{w;x})$ into $C_b(L^{1}_{w;x})$.
\end{prp}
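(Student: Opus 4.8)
The plan is to prove the Feller property by showing that whenever $\vartheta_n\to\vartheta$ in $L^1_{w;x}$, one has $P_tF(\vartheta_n)\to P_tF(\vartheta)$ for every $F\in C_b(L^1_{w;x})$, i.e. $\mathbb{E}F(v(t;\vartheta_n))\to\mathbb{E}F(v(t;\vartheta))$. The natural route is: first establish convergence of the solutions $v(t;\vartheta_n)\to v(t;\vartheta)$ in a suitable mode, then pass to the limit under the expectation using boundedness and continuity of $F$ together with dominated convergence.

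First I would invoke Proposition \ref{prpo-1}: the extended solution map $v:L^1_{w;x}\to C([0,T];L^1_\omega L^1_{w;x})$ is continuous, and in fact the contraction estimate (\ref{e-43}) gives the quantitative bound
\[
\sup_{t\in[0,T]}\mathbb{E}\|v(t;\vartheta_n)-v(t;\vartheta)\|_{L^1_{w;x}}\le \mathbb{E}\|\vartheta_n-\vartheta\|_{L^1_{w;x}}=\|\vartheta_n-\vartheta\|_{L^1_{w;x}}\to 0,
\]
since the initial data here are deterministic. In particular, for each fixed $t$, $v(t;\vartheta_n)\to v(t;\vartheta)$ in $L^1(\Omega;L^1_{w;x})$, hence (along a subsequence) $\mathbb{P}$-almost surely in $L^1_{w;x}$. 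Since $F$ is continuous on $L^1_{w;x}$, $F(v(t;\vartheta_n))\to F(v(t;\vartheta))$ $\mathbb{P}$-a.s.\ along that subsequence, and since $F$ is bounded, the dominated convergence theorem yields $\mathbb{E}F(v(t;\vartheta_n))\to\mathbb{E}F(v(t;\vartheta))$. A standard subsequence argument (every subsequence has a further subsequence converging to the same limit) promotes this to convergence of the full sequence, giving $P_tF(\vartheta_n)\to P_tF(\vartheta)$. Boundedness of $P_tF$ is immediate from $\|P_tF\|_\infty\le\|F\|_\infty$, and measurability of $P_tF$ follows from the measurable dependence of $v(t;\cdot)$ on $\vartheta$; together with the continuity just shown this gives $P_tF\in C_b(L^1_{w;x})$.

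The main obstacle is ensuring that Proposition \ref{prpo-1} applies in the present $L^1$ setting, i.e.\ that the object $v(t;\vartheta)$ appearing in the definition of $P_t$ is exactly the extension produced there and that the contraction (\ref{e-43}) is available for all $\vartheta,\tilde\vartheta\in L^1_{w;x}$ (not merely for regular data). This is precisely the content of Propositions \ref{prpo-1} and \ref{prp-16}, which identify $v(t;\vartheta)$ with the unique renormalized kinetic solution and furnish the contraction; under Hypotheses (H1), (H2), (H4) these are in force. One should also check that taking a deterministic $\vartheta\in L^1_{w;x}$ is legitimate in the framework of Proposition \ref{prpo-1} (where initial data lie in $L^1_{w;x}$, viewed as deterministic elements, so $\mathbb{E}\|\vartheta_n-\vartheta\|_{L^1_{w;x}}=\|\vartheta_n-\vartheta\|_{L^1_{w;x}}$), and that the equivalence of $L^1_{w;x}$ and $L^1_x$ noted in Section 2 lets one move freely between the weighted and unweighted norms. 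No delicate new estimate is needed: the Feller property here is essentially a soft consequence of the $L^1_{w;x}$-contraction established earlier, so the proof is short once the cited results are correctly assembled.
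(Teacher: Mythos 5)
Your proof is correct and follows essentially the same route as the paper: the paper's own proof merely states that, after the preparations already made, the Feller property follows from standard arguments (citing Theorem 9.14 in \cite{DPZ}), and those standard arguments are precisely the contraction estimate (\ref{e-43}) combined with the a.s.-subsequence and dominated-convergence step you spell out. Your additional check that Hypothesis (H4) implies (H3), so that Proposition \ref{prpo-1} remains applicable for deterministic data in $L^1_{w;x}$, is a detail the paper leaves implicit but which you handle correctly.
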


The following result not only reveals the existence and uniqueness of the invariant measures but also provides a mixing rate uniformly with respect to the initial condition.
\begin{thm}\label{thm-3}
Under Hypotheses (H1), (H2) and (H4), there exists a unique invariant measure $\mu\in \mathcal{M}_1(L^1_{w;x})$ for the semigroup $P_t$. Furthermore, there exists $C>0$, depending only on $q_0$, such that for all $t>0$,
\begin{eqnarray}
\sup_{\vartheta\in L^1_{w;x}}\sup_{\|F\|_{Lip(L^1_{w;x})}\leq 1} \left|P_tF(\vartheta)-\int_{L^1_{w;x}}F(\xi)\mu(d\xi)\right|\leq C\|w\|^{q^*}_{L^{q^*}_x}t^{-\frac{1}{q_0}},
\end{eqnarray}
where $q^*=\frac{q_0+1}{q_0}$ and $Lip(L^1_{w;x})$ is the space of Lipschitz continuous functions from $L^1_{w;x}$ to $\mathbb{R}$.
\end{thm}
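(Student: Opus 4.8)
The plan is to combine the super-$L^1_{w;x}$-contraction principle established in Section~4 --- the sharpening of Theorem~\ref{thm-8} that carries an additional dissipative term, namely, for two renormalized kinetic solutions $v(\cdot;\vartheta),v(\cdot;\tilde{\vartheta})$ of $\mathcal{E}(A,\Phi,\vartheta)$ and $\mathcal{E}(A,\Phi,\tilde{\vartheta})$ driven by the \emph{same} noise,
\[
\mathbb{E}\|v(t;\vartheta)-v(t;\tilde{\vartheta})\|_{L^1_{w;x}}+C_{q_0}\int_0^t\mathbb{E}\|v(r;\vartheta)-v(r;\tilde{\vartheta})\|^{q_0+1}_{L^{q_0+1}_x}\,dr\le \mathbb{E}\|\vartheta-\tilde{\vartheta}\|_{L^1_{w;x}},
\]
which is produced by the Kruzhkov-type inequality together with $\nabla w\equiv(-1,\dots,-1)$, the strict monotonicity of each $A_j$, and the non-degeneracy bound (\ref{eqq-36}) --- with a purely deterministic bootstrap on the scalar quantity $y(t)=y(t;\vartheta,\tilde{\vartheta}):=\mathbb{E}\|v(t;\vartheta)-v(t;\tilde{\vartheta})\|_{L^1_{w;x}}$, and then to read off existence, uniqueness and the mixing rate in the Wasserstein-$1$ metric $W_1$ on $\mathcal{M}_1(L^1_{w;x})$.

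First I would record the elementary properties of $y$: it is finite (by the $L^1_x$-bound in Definition~\ref{dfn-2}), continuous in $t$ (Theorem~\ref{thm-10}), and non-increasing (the contraction (\ref{e-43}) composed with the time-homogeneous flow property, applied conditionally on $\mathcal{F}_s$ with the shifted noise). Applying the super-contraction on the interval $[t/2,t]$ --- legitimate once the flow/Markov property of the renormalized solutions is in place --- discarding the boundary term, using Jensen in $\omega$ together with H\"older on the bounded domain $D$ in the form $\|\cdot\|_{L^1_{w;x}}\le\|w\|_{L^{q^*}_x}\|\cdot\|_{L^{q_0+1}_x}$ with $q^*=\frac{q_0+1}{q_0}$, and finally the monotonicity of $y$ (so that $\int_{t/2}^t y(r)^{q_0+1}\,dr\ge \frac{t}{2}\,y(t)^{q_0+1}$), I obtain the recursion
\[
y(t)^{q_0+1}\le \frac{2\,\|w\|^{q_0+1}_{L^{q^*}_x}}{C_{q_0}\,t}\,y(t/2),\qquad t>0.
\]
Iterating this $n$ times and invoking the a priori bound $y(t/2^n)\le y(0)<\infty$ coming from the contraction, the factor $y(t/2^n)$ enters only through the exponent $(q_0+1)^{-n}\to0$, so $y(t/2^n)^{(q_0+1)^{-n}}\to 1$; summing the geometric series $\sum_{k\ge1}(q_0+1)^{-k}=q_0^{-1}$ and $\sum_{k\ge1}(k-1)(q_0+1)^{-k}=q_0^{-2}$ then yields
\[
y(t;\vartheta,\tilde{\vartheta})\le C(q_0)\,\|w\|^{q^*}_{L^{q^*}_x}\,t^{-1/q_0}\qquad\text{for all }\vartheta,\tilde{\vartheta}\in L^1_{w;x},\ t>0,
\]
with a constant depending only on $q_0$; the essential point is that this bound is \emph{uniform in the initial data}.

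For the existence and uniqueness of the invariant measure I would work in $(\mathcal{M}_1(L^1_{w;x}),W_1)$, which is a complete metric space because $L^1_{w;x}\cong L^1(D)$ is a separable Banach space, and in which every law $P_t^*\delta_\vartheta$ lives since $\mathbb{E}\|v(t;\vartheta)\|_{L^1_{w;x}}<\infty$. Because $P_sF$ is again $1$-Lipschitz whenever $F$ is (by (\ref{e-43})), the operators $P_s^*$ are $W_1$-contractions; combined with $W_1(P_{t+s}^*\delta_\vartheta,P_t^*\delta_\vartheta)\le\int y(t;\vartheta,\xi)\,(P_s^*\delta_\vartheta)(d\xi)\le C(q_0)\|w\|^{q^*}_{L^{q^*}_x}t^{-1/q_0}$, this shows $\{P_t^*\delta_\vartheta\}_t$ is $W_1$-Cauchy as $t\to\infty$; its limit $\mu$ is independent of $\vartheta$ (again by the uniform decay of $y$), is invariant --- pass to the limit in $P_s^*P_t^*\delta_\vartheta=P_{t+s}^*\delta_\vartheta$ using the $W_1$-continuity of $P_s^*$, together with the Feller property of Proposition~\ref{prp-4} to identify $\mu$ as a genuine invariant probability measure --- and is the only invariant measure, since any invariant $\nu$ obeys $W_1(\nu,\mu)=W_1(P_t^*\nu,P_t^*\mu)\le\sup_{\xi,\xi'}y(t;\xi,\xi')\to0$. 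Finally, for $F$ with $\|F\|_{Lip(L^1_{w;x})}\le1$, invariance of $\mu$ gives
\[
\Big|P_tF(\vartheta)-\int_{L^1_{w;x}}F\,d\mu\Big|=\Big|\int_{L^1_{w;x}}\big(P_tF(\vartheta)-P_tF(\xi)\big)\,\mu(d\xi)\Big|\le\int_{L^1_{w;x}}y(t;\vartheta,\xi)\,\mu(d\xi)\le C(q_0)\|w\|^{q^*}_{L^{q^*}_x}t^{-1/q_0},
\]
which is the claimed polynomial mixing property.

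The main obstacle I anticipate is step one: rigorously transferring the super-contraction, proved in Section~4 for kinetic solutions with $L^q$ data, to the renormalized solutions / extensions $v(\cdot;\vartheta)$ with merely $L^1$ data, and establishing the time-homogeneous flow (Markov) property together with the conditional form of the contraction that legitimizes restarting the estimate on $[t/2,t]$. Secondary difficulties are purely bookkeeping: verifying that the $y(0)$-dependent factor truly disappears in the iteration so that the final estimate is uniform over all $\vartheta$, and tracking the exact power $\|w\|^{q^*}_{L^{q^*}_x}$ through the geometric sums. The existence step is comparatively soft once one works in the complete Wasserstein space (it could alternatively be obtained by Krylov--Bogolyubov from Proposition~\ref{prp-4} and the uniform moment bounds), but one must still check that $P_s^*$ preserves $\mathcal{M}_1(L^1_{w;x})$ and is compatible with weak convergence.
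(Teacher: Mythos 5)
Your proposal is correct in substance and reaches the theorem by a route that differs from the paper's in two places, both legitimate. First, for the uniform decay $\mathbb{E}\|u(t;\vartheta)-u(t;\tilde{\vartheta})\|_{L^1_{w;x}}\le C_{q_0}\|w\|^{q^*}_{L^{q^*}_x}t^{-1/q_0}$, the paper (Proposition \ref{prp-6}) feeds the super-contraction of Lemma \ref{lem-2} through exactly the H\"older step you describe to obtain the integral inequality $f(t)-f(s)\le -C_{q_0}\|w\|^{-(q_0+1)}_{L^{q^*}_x}\int_s^t f^{q_0+1}(r)\,dr$ and then invokes an ODE comparison argument (as in \cite{DGT20}); your dyadic recursion $y(t)^{q_0+1}\le \tfrac{2\|w\|^{q_0+1}}{C_{q_0}t}\,y(t/2)$ with the geometric-series bookkeeping is a discrete version of the same comparison and yields the identical constant structure, with the virtue of making the disappearance of the $y(0)$-dependence completely explicit. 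Your anticipated ``main obstacle'' (restarting the estimate at time $t/2$) is in fact already resolved in the paper at the level of kinetic solutions by Lemma \ref{lem-2}, which is proved directly from the restarted kinetic formulation (Lemma \ref{lem-5}) rather than via the Markov property; the passage to $L^1_{w;x}$ data is then done by density exactly as in Corollary \ref{cor-5}. Second, for the construction of $\mu$, the paper extends the noise to negative times, proves the cocycle property (Proposition \ref{prp-5}), and shows the pullback trajectories $\eta_s(\vartheta)=u_s(0;\vartheta)$ are Cauchy in $L^1_{\omega}L^1_{w;x}$ as $s\to-\infty$, so that $\mu$ arises as the law of a genuine limiting random variable $X$; you instead run forward in time and show $\{P_t^*\delta_\vartheta\}$ is Cauchy in the complete space $(\mathcal{M}_1(L^1_{w;x}),\mathcal{W}_1)$. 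The forward Wasserstein argument is softer (no two-sided noise, no cocycle property at negative times needed beyond the semigroup identity $P_{t+s}=P_tP_s$), while the paper's pullback gives slightly more information (an explicit stationary random variable). Your uniqueness and mixing computations coincide with the paper's final estimates. The only points you should make explicit to close the argument are (i) the semigroup property $P_{t+s}=P_tP_s$, which underlies your Cauchy estimate and which the paper secures through uniqueness of solutions and Proposition \ref{prp-5}, and (ii) the extension of the invariance identity from Lipschitz test functions (where $\mathcal{W}_1$-convergence operates) to all of $C_b(L^1_{w;x})$, which is routine.
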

Let $\mathcal{D}_{\mathbb{P}}(v(t; \vartheta))=\mathbb{P}\circ v(t; \vartheta)^{-1}$ be the law of $v(t; \vartheta)$ under $\mathbb{P}$. In view of the equivalence between $\|\cdot\|_{L^1_{w;x}}$ and $\|\cdot\|_{L^1_{x}}$, using Kantorovich-Rubinstein formula (see Theorem 5.10 in \cite{V08}), it follows immediately that
\begin{cor}
  Under Hypotheses (H1), (H2) and (H4), there exists a unique invariant measure $\mu\in \mathcal{M}_1(L^1_{x})$ for the semigroup $P_t$. Furthermore, there exists $C>0$, depending only on $q_0$, such that for all $t>0$,
\begin{eqnarray}
\sup_{\vartheta\in L^1_{x}} \mathcal{W}_1\Big(\mathcal{D}_{\mathbb{P}}(v(t; \vartheta)),\mu\Big)\leq C\|w\|^{q^*}_{L^{q^*}_x}t^{-\frac{1}{q_0}},
\end{eqnarray}
where $\mathcal{W}_1$ is the $L^1-$Wasserstein distance.
\end{cor}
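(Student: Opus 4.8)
The plan is to follow the classical scheme for ergodicity of Markov semigroups, but to extract the polynomial decay rate from the "super $L^1_{w;x}$-contraction principle" of Theorem \ref{thm-8}. First I would establish existence of an invariant measure. Since the Feller property is already available (Proposition \ref{prp-4}), by the Krylov–Bogoliubov argument it suffices to prove tightness of the family of time averages $\frac{1}{t}\int_0^t P_s^*\delta_0\,ds$ in $\mathcal{M}_1(L^1_{w;x})$. The key input here is a uniform moment bound: from the kinetic formulation together with Hypothesis (H4) one derives, for the renormalized solution started at $\vartheta=0$, an a priori estimate of the form $\mathbb{E}\|v(t;0)\|_{L^1_{w;x}} \leq C$ uniformly in $t$, and more importantly a uniform-in-time bound involving the dissipation term coming from the non-degeneracy condition (\ref{eqq-36}). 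Because $L^1_{w;x}$ is not compactly embedded anywhere trivial, tightness must be obtained by exhibiting a tighter quantity controlled uniformly in time — one expects a bound on $\mathbb{E}\,\mathrm{ess\,sup}_{[0,t]}\|v(s;0)\|_{L^r_x}$ for some $r>1$, or on a fractional Sobolev norm via the averaging lemma / the non-degeneracy of the flux, which then gives tightness in $L^1_{w;x}$ by a standard interpolation/compactness argument.

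Next I would prove uniqueness of the invariant measure together with the quantitative mixing rate in one stroke. The mechanism is as follows. Let $F$ be $1$-Lipschitz on $L^1_{w;x}$; then for any two initial data $\vartheta,\tilde\vartheta$,
\[
|P_tF(\vartheta)-P_tF(\tilde\vartheta)| \leq \mathbb{E}\|v(t;\vartheta)-v(t;\tilde\vartheta)\|_{L^1_{w;x}}.
\]
The plain contraction (\ref{e-43}) only gives boundedness of the right-hand side, not decay; the improvement comes from the extra strictly negative term in the "super $L^1_{w;x}$-contraction principle" announced in the introduction (the term referred to as (\ref{e-16-1})). Writing $\Delta(t):=\mathbb{E}\|v(t;\vartheta)-v(t;\tilde\vartheta)\|_{L^1_{w;x}}$, that principle should yield a differential inequality of the form $\frac{d}{dt}\Delta(t) \leq -c\,\bigl(\Delta(t)\bigr)^{1+q_0}/(\text{something involving }\|w\|_{L^{q^*}_x})$, where the power $1+q_0$ and the weight $\|w\|_{L^{q^*}_x}^{q^*}$ with $q^*=\frac{q_0+1}{q_0}$ arise from combining (\ref{eqq-36}) with Hölder's inequality in the weighted space. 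Integrating this nonlinear ODE gives $\Delta(t)\leq C\|w\|_{L^{q^*}_x}^{q^*} t^{-1/q_0}$, uniformly over all pairs of initial data — this is exactly the rate claimed. Uniqueness of $\mu$ follows: if $\mu_1,\mu_2$ are invariant, then $|\int F d\mu_1 - \int F d\mu_2| = |\int\int (P_tF(\vartheta)-P_tF(\tilde\vartheta))\,\mu_1(d\vartheta)\mu_2(d\tilde\vartheta)| \to 0$ as $t\to\infty$, so $\mu_1=\mu_2$. Finally, for the stated bound I would combine $|P_tF(\vartheta)-\int F d\mu| = |\int(P_tF(\vartheta)-P_tF(\tilde\vartheta))\mu(d\tilde\vartheta)| \leq \sup_{\tilde\vartheta}\Delta(t)$ with the decay estimate.

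The main obstacle, I expect, is twofold. The first difficulty is rigorously justifying the differential inequality for $\Delta(t)$: the "super contraction" is proven by the doubling-of-variables method for kinetic solutions in $L^1_{w;x}$ (Section 4), and one must pass from the resulting integrated inequality to a genuine pointwise-in-time nonlinear ODE, which requires care about the time regularity of $\Delta$ (using Theorem \ref{thm-10}, i.e. $v\in C([0,T];L^1_\omega L^1_{w;x})$), and about localizing the negative dissipation term uniformly — in particular one needs the lower bound (\ref{eqq-36}) to survive the weighting by $w$ and the Hölder step, which is where the exponent $q^* = \frac{q_0+1}{q_0}$ and the norm $\|w\|_{L^{q^*}_x}$ enter. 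The second difficulty is the tightness needed for Krylov–Bogoliubov: since the natural a priori bounds are in $L^1_x$ (the space the measure lives in) and $L^1$-balls are not tight, one must upgrade to a strictly stronger norm uniformly in time, which relies on a careful uniform-in-time version of the kinetic estimates and the averaging/non-degeneracy structure of the equation rather than on any smoothing from the (merely first-order) hyperbolic operator.
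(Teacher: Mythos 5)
Your derivation of the polynomial decay rate is essentially the paper's argument: Lemma \ref{lem-2} combined with (\ref{eqq-36}) and H\"older's inequality (which is exactly where $\|w\|_{L^{q^*}_x}$ with $q^*=\tfrac{q_0+1}{q_0}$ enters) gives the integrated inequality $f(t)-f(s)\leq -C_{q_0}\|w\|_{L^{q^*}_x}^{-(q_0+1)}\int_s^t f^{q_0+1}(r)\,dr$ for $f(t)=\mathbb{E}\|u(t;\vartheta)-u(t;\tilde\vartheta)\|_{L^1_{w;x}}$, and a comparison argument (using the continuity of $f$ from Theorem \ref{thm-10}) yields the uniform bound $f(t)\leq C_{q_0}\|w\|^{q^*}_{L^{q^*}_x}t^{-1/q_0}$; your treatment of uniqueness and of the final Wasserstein/Lipschitz-dual bound from this estimate is also what the paper does.

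The genuine gap is in your existence step. You propose Krylov--Bogoliubov and correctly identify tightness of the time averages in $L^1_{w;x}$ as the obstacle, but you do not resolve it, and the resolution you sketch would not work as stated: a uniform-in-time bound on $\mathbb{E}\,\mathrm{ess\,sup}\,\|v(s;0)\|_{L^r_x}$ for some $r>1$ gives only weak compactness, not relative compactness of sublevel sets in the norm topology of $L^1_x$; genuine tightness would require spatial compactness (e.g.\ a uniform $W^{s,1}_x$ bound via averaging lemmas for the boundary-value problem), which is a substantial undertaking and is nowhere carried out. The paper avoids tightness altogether: it solves the equation from initial time $s<0$ (equation (\ref{e-23})), sets $\eta_s(\vartheta)=u_s(0;\vartheta)$, and uses the cocycle property (Proposition \ref{prp-5}) together with the \emph{same} uniform polynomial decay you already derived to show that $\mathbb{E}\|\eta_{s_2}(\vartheta)-\eta_{s_1}(\vartheta)\|_{L^1_{w;x}}\lesssim \|w\|^{q^*}_{L^{q^*}_x}|s_2|^{-1/q_0}$ for $s_1\leq s_2\leq -1$, so that $\eta_s(0)$ is Cauchy in $L^1_\omega L^1_{w;x}$ as $s\to-\infty$; the invariant measure is then the law of the limit $X$, and invariance is checked from the Feller property and the relations $P_{s,r}P_{r,t}=P_{s,t}$, $P_{s,t}=P_{s+r,t+r}$. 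Since the uniform decay over \emph{all} pairs of initial data is exactly what makes this pullback construction work, you already possess the key ingredient and should replace the Krylov--Bogoliubov step by this argument rather than attempt the tightness bound.
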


\section{Proof of the contraction inequality in the weighted space}\label{well-posedness}
In this section, we will prove the contraction inequality (\ref{ee-34}) for kinetic solutions of $\mathcal{E}(A,\Phi,\vartheta)$ and $\mathcal{E}(A,\Phi,\tilde{\vartheta})$. Firstly, we prove a technical proposition using the doubling variables method applied in several papers, e.g. \cite{D-V-1,DWZZ}.
\begin{prp}\label{prp-1} Assume Hypotheses (H1)-(H3) are in force.
Let $u(t;\vartheta)$ and $u(t;\tilde{\vartheta})$ be kinetic solutions of $\mathcal{E}(A,\Phi,\vartheta), \mathcal{E}(A,\Phi,\tilde{\vartheta})$, respectively. Then, for any $0\leq t< T$, $\gamma, \delta>0$, $N\in \mathbb{D}$, and for any element $\lambda$ of the partition of unity $\{\lambda_i\}_{i=0,1,\dots,M}$ on $\bar{D}$, the functions $f_1(t):=f_1(t,x,\xi)=I_{u(t,x;\vartheta)>\xi}$ and $f_2(t):=f_2(t,y,\zeta)=I_{u(t,y;\tilde{\vartheta})>\zeta}$ with data $(f_{i,0}, f_{i,b})$, $i=1,2,$ satisfy
 \begin{eqnarray}\notag
 &&\mathbb{E} \int_{D^{\lambda}_x}\int_{D_y}\int^N_{-N}\int^N_{-N}(f^{\pm}_1(t)\bar{f}^{\pm}_2(t)+\bar{f}^{\pm}_1(t)f^{\pm}_2(t))\rho^{\lambda}_{\gamma}(y-x)\psi_{\delta}(\xi-\zeta)\lambda(x)w(x)d\xi d\zeta dy dx\\ \notag
 &\leq& \mathbb{E} \int_{D^{\lambda}_x}\int_{D_y}\int^N_{-N}\int^N_{-N}(f_{1,0}\bar{f}_{2,0}+\bar{f}_{1,0}f_{2,0})\rho^{\lambda}_{\gamma}(y-x)\psi_{\delta}(\xi-\zeta)\lambda(x)w(x)d\xi d\zeta dy dx\\
\label{e-4}
&& +\sum_{i=1}^5J_i+I_N,
  \end{eqnarray}
  where
\begin{eqnarray*}
J_1&=& \mathbb{E}\int^t_0\int_{\partial D^{\lambda}_x}\int_{D_y}\int^N_{-N}\int^N_{-N}(\tilde{f}^{(\lambda)}_1(s)\bar{f}_2(s)+\tilde{\bar{f}}^{(\lambda)}_1(s)f_2(s))(-a(\xi)\cdot\mathbf{ n})\\
&& \times\rho^{\lambda}_{\gamma}(y-x)\psi_{\delta}(\xi-\zeta)\lambda(x)w(x)d\xi d\zeta dy d\sigma(x) ds,\\
 J_2&=&\mathbb{E}\int^t_0\int_{D^{\lambda}_x}\int_{D_y}\int^N_{-N}\int^N_{-N}( f_1(s)\bar{f}_2(s)+\bar{f}_1(s)f_2(s)) (a(\xi)-a(\zeta))\cdot \nabla_x\rho^{\lambda}_{\gamma}(y-x)\lambda(x)w(x)\psi_{\delta}(\xi-\zeta)d\xi d\zeta dy dxds,\\
J_3&=&\mathbb{E}\int^t_0\int_{D^{\lambda}_x}\int_{D_y}\int^N_{-N}\int^N_{-N} (f_1(s)\bar{f}_2(s)+\bar{f}_1(s)f_2(s)) a(\xi)\cdot \nabla_x\lambda(x) \rho^{\lambda}_{\gamma}(y-x)w(x)\psi_{\delta}(\xi-\zeta)d\xi d\zeta dy dx ds,\\
J_4&=&-\sum^d_{j=1}\mathbb{E}\int^t_0\int_{D^{\lambda}_x}\int_{D_y}\int^N_{-N}\int^N_{-N} (f_1(s)\bar{f}_2(s)+\bar{f}_1(s)f_2(s)) a_j(\xi) \rho^{\lambda}_{\gamma}(y-x)\lambda(x)\psi_{\delta}(\xi-\zeta)d\xi d\zeta dy dx ds,\\
J_5&=&\mathbb{E}\int^t_0\int_{D^{\lambda}_x}\int_{D_y}\int^N_{-N}\int^N_{-N}\lambda(x)w(x)\rho^{\lambda}_{\gamma}(y-x)\psi_{\delta}(\xi-\zeta)
\sum_{k\geq 1}|g_k(x,\xi)-g_k(y,\zeta)|^2d\nu^{1}_{s,x}\otimes \nu^{2}_{s,y}(\xi,\zeta)dy dxds,
\end{eqnarray*}
\begin{eqnarray}\label{eqq-32}
\limsup_{N\rightarrow \infty}I_N=0,
\end{eqnarray}
with $ I_N$ being defined by (\ref{eqq-16}), $f_{1,0}=I_{\vartheta>\xi}$,
$f_{2,0}=I_{\tilde{\vartheta}>\zeta}$, $f_{i,b}=I_{0>\xi}$ for $i=1,2$, $\nu^1_{x,s}(\xi)=\delta_{u(s,x;\vartheta)=\xi}$ and $\nu^2_{y,s}(\zeta)=\delta_{u(s,y;\tilde{\vartheta})=\zeta}$.
\end{prp}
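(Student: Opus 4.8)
\medskip
\noindent\textbf{Proof strategy.} The plan is to run the doubling-of-variables argument in the spirit of \cite{D-V-1,DWZZ,KN16}, but integrating everything against the extra weight $\lambda(x)w(x)$. Fix $\lambda$, $\gamma,\delta>0$, $N\in\mathbb{D}$ and $0\le t<T$. I would start from the strengthened kinetic formulation of Lemma \ref{lem-5} (i.e.\ (\ref{eq-1})) applied to $f_1=I_{u(\cdot;\vartheta)>\xi}$ in the variables $(x,\xi)$ with the cutoff $\Psi_\eta$ in $\xi$, and from the conjugate formulation (\ref{eq-1-1}) applied to $\bar f_2=I_{u(\cdot;\tilde\vartheta)\le\zeta}$ in the variables $(y,\zeta)$ with the cutoff $\Psi_\eta$ in $\zeta$. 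Since the mollifier $\rho^\lambda$ is centred at the $x_d$-shift $(L_\lambda+1)\gamma>0$, on the support of $\rho^\lambda_\gamma(y-x)$ the $y$-variable stays in the interior of $D$, so the $\bar f_2$-equation contributes no boundary term. I would then apply the Itô product formula to $s\mapsto\langle\!\langle f_1^\pm(s)\bar f_2^\pm(s),\Theta\rangle\!\rangle$ for the coupling kernel $\Theta(x,\xi,y,\zeta)=\rho^\lambda_\gamma(y-x)\psi_\delta(\xi-\zeta)\lambda(x)w(x)$, where $\langle\!\langle\cdot\rangle\!\rangle$ denotes integration over $(x,\xi,y,\zeta)\in D^\lambda\times(-N,N)\times D\times(-N,N)$; this is made rigorous through the regularized kinetic functions $f^{\lambda,\gamma}$ and the left/right weak limits of Proposition \ref{prp-3}, exactly as in \cite{D-V-1}, and the two choices of sign $\pm$ are handled identically. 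Taking expectations kills the martingale terms; finally I would symmetrize by adding the analogous identity for $\bar f_1^\pm f_2^\pm$ and pass to the limit $\eta\downarrow0$.

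\noindent\textbf{Identification of the right-hand side.} The $s$-boundary of the Itô formula produces the left-hand side of (\ref{e-4}) together with the initial-data term. For the transport terms, expanding $a(\xi)\cdot\nabla_x\big(\rho^\lambda_\gamma(y-x)\lambda(x)w(x)\big)$ by the product rule and using $\nabla_x\rho^\lambda_\gamma(y-x)=-\nabla_y\rho^\lambda_\gamma(y-x)$ to cancel the $a(\zeta)\cdot\nabla_y$ contribution coming from the $\bar f_2$-equation, what remains is exactly $J_2$ (the $(a(\xi)-a(\zeta))\cdot\nabla_x\rho^\lambda_\gamma$ term), $J_3$ (the $a(\xi)\cdot\nabla_x\lambda$ term), and --- since $\nabla_x w\equiv-(1,\dots,1)$ --- the term $-\sum_{j=1}^d\int a_j(\xi)(\cdots)=J_4$; this last one is the genuinely new, sign-definite term that will later yield the strict contraction. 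The bulk kinetic-measure contributions, after integration by parts in $\zeta$ (resp.\ $\xi$) against $\nu^2$ (resp.\ $\nu^1$), carry the favourable sign and are simply discarded. The two Itô corrections $\tfrac12 G^2_1\partial_\xi\Theta$, $\tfrac12 G^2_2\partial_\zeta\Theta$ and the quadratic-covariation term $-\sum_k g_k(x,\xi)g_k(y,\zeta)\partial_\xi\partial_\zeta\Theta$ recombine, via $\partial_\xi\psi_\delta(\xi-\zeta)=-\partial_\zeta\psi_\delta(\xi-\zeta)$ and integration against $\nu^1_{s,x}\otimes\nu^2_{s,y}$, into the single term $J_5$ involving $\sum_k|g_k(x,\xi)-g_k(y,\zeta)|^2$. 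The boundary integral of the $f_1$-equation over $\partial D^\lambda_x$ survives as $J_1$ (and, where needed, the Dirichlet datum is reinterpreted through $f_b$ and the non-negative defect measures $\bar m^\pm_N$ by (\ref{eqq-38})--(\ref{eqq-39})).

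\noindent\textbf{The remainder $I_N$.} All contributions localized near $\xi=\pm N$ or $\zeta=\pm N$ --- namely the $\eta\downarrow0$ limits of the terms carrying $\psi_\eta(N-\xi-\eta)-\psi_\eta(N+\xi-\eta)$ and its $\zeta$-counterpart, which by Lemma \ref{lem-4}(ii) converge to quantities bounded by $\mu'_m(\mp N)$ and $(1+N^2)\mu'_\nu(\mp N)$, together with the boundary terms at $\xi=\pm N$, $\zeta=\pm N$ left over from the integrations by parts above --- are collected into the single quantity $I_N$ defined in (\ref{eqq-16}). Lemma \ref{lem-4}(i) then yields $\limsup_{N\to\infty}I_N=0$, which is (\ref{eqq-32}).

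\noindent\textbf{Main obstacle.} The most delicate point is the bookkeeping at the boundary combined with the fact that the weight $w$ neither vanishes on $\partial D$ nor is compactly supported: one must check that the only boundary integral created by the $x$-integration by parts is the admissible $J_1$, that the $\nabla_x w$ contribution is precisely $J_4$, and that everything genuinely tied to the truncation level $N$ can be absorbed into $I_N$ and shown to vanish. The second, more technical, difficulty is the rigorous justification of the Itô product formula for the discontinuous kinetic functions (via $f^{\lambda,\gamma}$ and Proposition \ref{prp-3}) and keeping the order of limits correct --- $\eta\downarrow0$ first, with $\gamma,\delta,N$ held fixed.
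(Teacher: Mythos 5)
Your proposal follows essentially the same route as the paper: the doubling-of-variables identity from (\ref{eq-1})--(\ref{eq-1-1}) tested against $\rho^{\lambda}_{\gamma}(y-x)\psi_{\delta}(\xi-\zeta)\lambda(x)w(x)$, the observation that the shifted mollifier kills the $y$-boundary term, the splitting of the transport term into $J_2,J_3,J_4$ via $\nabla_x\rho^{\lambda}_{\gamma}=-\nabla_y\rho^{\lambda}_{\gamma}$ and $\partial_{x_j}w=-1$, the recombination of the It\^o corrections and quadratic covariation into $J_5$, the discarding of the sign-definite kinetic-measure products, and the absorption of all $\psi_{\eta}$-cutoff remainders into $I_N$ controlled by Lemma \ref{lem-4}. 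The argument is correct and matches the paper's proof in both structure and the order of limits.
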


\begin{proof}
 Denote by $m_1$ and $m_2$ the two kinetic measures associated to $\mathcal{E}(A,\Phi,\vartheta)$ and $ \mathcal{E}(A,\Phi,\tilde{\vartheta})$, respectively.
Let $\varphi_1\in C^{\infty}_c( \mathbb{R}^d_x\times \mathbb{R}_{\xi})$ and
$\varphi_2\in C^{\infty}_c( \mathbb{R}^d_y\times \mathbb{R}_{\zeta})$.
Set $\alpha(x,\xi,y,\zeta)=\varphi_1(x,\xi)\varphi_2(y,\zeta)$.

Employing the same method as in \cite{D-V-1} and \cite{KN16},  using (\ref{eq-1})-(\ref{eq-1-1}), we obtain
\begin{eqnarray}\notag
 &&\mathbb{E} \int_{D^{\lambda}_x}\int_{D_y}\int^N_{-N}\int^N_{-N}\Psi_{\eta}(\xi,\zeta)f^+_1(t)\bar{f}^+_2(t)\alpha^{\lambda}w(x)d\xi d\zeta dy dx \\ \notag
 &=&\mathbb{E} \int_{D^{\lambda}_x}\int_{D_y}\int^N_{-N}\int^N_{-N}\Psi_{\eta}(\xi,\zeta)f_{1,0}\bar{f}_{2,0}\alpha^{\lambda}w(x)d\xi d\zeta dy dx\\ \notag
 &&+\mathbb{E}\int^t_0\int_{D^{\lambda}_x}\int_{D_y}\int^N_{-N}\int^N_{-N}\Psi_{\eta}(\xi,\zeta) f_1(s)\bar{f}_2(s) (a(\xi)\cdot \nabla_x+a(\zeta)\cdot \nabla_y) (\alpha^{\lambda}w(x))d\xi d\zeta dy dx ds\\ \notag
&&+\frac{1}{2}\mathbb{E}\int^t_0\int_{D^{\lambda}_x}\int_{D_y}\int^N_{-N}\int^N_{-N}\Psi_{\eta}(\xi,\zeta)\bar{f}_2(s)\partial_{\xi}\alpha^{\lambda} w(x)G^2(x,\xi)d\nu^1_{x,s}(\xi)d\zeta dy dx ds\\ \notag
&&+\mathbb{E}\int^t_0\int_{\partial D^{\lambda}_x}\int_{D_y}\int^N_{-N}\int^N_{-N}\Psi_{\eta}(\xi,\zeta)\tilde{f}^{(\lambda)}_1(s,x,\xi)\bar{f}_2(s)(-a(\xi)\cdot\mathbf{ n})\alpha^{\lambda}w(x)d\xi d\zeta dy d\sigma(x)ds\\ \notag
&&-\mathbb{E}\int_{(0,t]\times D^{\lambda}_x\times (-N,N)}\int_{D_y}\int^N_{-N}\Psi_{\eta}(\xi,\zeta)\bar{f}^+_2(s)\partial_{\xi}\alpha^{\lambda}w(x)d\zeta dydm_1(s,x,\xi)\\ \notag
&&-\frac{1}{2}\mathbb{E}\int^t_0\int_{D^{\lambda}_x}\int_{D_y}\int^N_{-N}\int^N_{-N}\Psi_{\eta}(\zeta)\bar{f}_2(s)\Big(\psi_{\eta}(-\eta+N-\xi)-\psi_{\eta}(-\eta+N+\xi)\Big)\alpha^{\lambda} w(x)G^2(x,\xi)d\nu^1_{s,x}(\xi)d\zeta dy dxds\\ \notag
&&+\mathbb{E}\int_{(0,t]\times D^{\lambda}_x\times (-N,N)}\int_{D_y}\int^N_{-N}\Psi_{\eta}(\zeta)\bar{f}^+_2(s)\Big(\psi_{\eta}(-\eta+N-\xi)-\psi_{\eta}(-\eta+N+\xi)\Big)\alpha^{\lambda} w(x)d\zeta dydm_1(s,x,\xi)\\ \notag
&&-\frac{1}{2}\mathbb{E}\int^t_0\int_{D^{\lambda}_x}\int_{ D_y}\int^N_{-N}\int^N_{-N}\Psi_{\eta}(\xi,\zeta)f_1(s)\partial_{\zeta}\alpha^{\lambda} w(x)G^2(y,\zeta)d\xi d\nu^2_{y,s}(\zeta)dydxds\\ \notag
&&+\mathbb{E}\int^t_0\int_{D^{\lambda}_x}\int_{\partial D_y}\int^N_{-N}\int^N_{-N}\Psi_{\eta}(\xi,\zeta)f_1(s)\tilde{\bar{f}}_2(s,y,\zeta)(-a(\zeta)\cdot \mathbf{n})\alpha^{\lambda} w(x)d\xi d\zeta d \sigma(y) dx ds\\ \notag
&&+\mathbb{E}\int_{(0,t]\times D_y\times (-N,N)}\int_{D^{\lambda}_x}\int^N_{-N}\Psi_{\eta}(\xi,\zeta)f^{-}_1(s)\partial_{\zeta}\alpha^{\lambda} w(x)d\xi dx dm_2(s,y,\zeta)\\ \notag
&&+\frac{1}{2}\mathbb{E}\int^t_0\int_{D^{\lambda}_x}\int_{ D_y}\int^N_{-N}\int^N_{-N}\Psi_{\eta}(\xi)f_1(s)\Big(\psi_{\eta}(-\eta+N-\zeta)-\psi_{\eta}(-\eta+N+\zeta)\Big)\alpha^{\lambda}w(x)G^2(y,\zeta)d\xi d\nu^2_{s,y}(\zeta)dy dx ds\\ \notag
&&-\mathbb{E}\int_{(0,t]\times D_y\times (-N,N)}\int_{D^{\lambda}_x}\int^N_{-N}\Psi_{\eta}(\xi)f^{-}_1(s)\Big(\psi_{\eta}(-\eta+N-\zeta)-\psi_{\eta}(-\eta+N+\zeta)\Big)\alpha^{\lambda}w(x)d\xi dxdm_2(s,y,\zeta)\\ \notag
&&-\sum_{k\geq 1}\mathbb{E}\int^t_0\int_{D^{\lambda}_x}\int_{ D_y}\int^N_{-N}\int^N_{-N}\Psi_{\eta}(\xi,\zeta) g_{k,1}(x,\xi)g_{k,2}(y,\zeta)\alpha^{\lambda}w(x)d\nu^1_{x,s}\otimes \nu^2_{y,s}(\xi,\zeta)dy dxds\\ \label{eqq-14}
&=:&\mathbb{E} \int_{D^{\lambda}_x}\int_{D_y}\int^N_{-N}\int^N_{-N}\Psi_{\eta}(\xi,\zeta)f_{1,0}\bar{f}_{2,0}\alpha^{\lambda}w(x)d\xi d\zeta dy dx+\sum^{12}_{i=1}I_i,
\end{eqnarray}
where $\Psi_{\eta}(\xi,\zeta)=\Psi_{\eta}(\xi)\Psi_{\eta}(\zeta)$ and $\alpha^{\lambda}=\alpha(x,\xi,y,\zeta)\lambda(x)$.
By a density argument, (\ref{eqq-14}) remains true for any test function $\alpha\in C^{\infty}_c( D_x\times \mathbb{R}_\xi\times  D_y\times \mathbb{R}_\zeta)$. Thanks to (\ref{equ-37}) and (\ref{eqq-31}), the assumption that $\alpha$ is compactly supported can be relaxed.
By a truncation argument,
we now will  take $\alpha(x,\xi,y,\zeta)=\rho^{\lambda}_{\gamma}(y-x)\psi_{\delta}(\xi-\zeta)\in C^{\infty}_b( D_x\times \mathbb{R}_\xi\times  D_y\times \mathbb{R}_\zeta)$. In this case, $\alpha^{\lambda}=\lambda(x)\rho^{\lambda}_{\gamma}(y-x)\psi_{\delta}(\xi-\zeta)$. Note that $\rho^{\lambda}_{\gamma}(y-x)$ are equal to 0 on $D^{\lambda}_x\times \partial D_y$, which yields that $I_8=0$. Also we have
\begin{eqnarray}\label{P-11}
(\partial_{\xi}+\partial_{\zeta})\alpha^{\lambda}=0, \quad \nabla_x\rho^{\lambda}_{\gamma}(y-x)=-\nabla_y\rho^{\lambda}_{\gamma}(y-x).
\end{eqnarray}
Since $\partial_{x_i}w(x)=-1$, by (\ref{P-11}), we have
\begin{eqnarray*}
I_1&=&\mathbb{E}\int^t_0\int_{D^{\lambda}_x}\int_{D_y}\int^N_{-N}\int^N_{-N}\Psi_{\eta}(\xi,\zeta) f_1(s)\bar{f}_2(s) (a(\xi)-a(\zeta))\cdot \nabla_x\rho^{\lambda}_{\gamma}(y-x)\lambda(x)w(x)\psi_{\delta}(\xi-\zeta)d\xi d\zeta dy dxds\\
&& +\mathbb{E}\int^t_0\int_{D^{\lambda}_x}\int_{D_y}\int^N_{-N}\int^N_{-N}\Psi_{\eta}(\xi,\zeta) f_1(s)\bar{f}_2(s) a(\xi)\cdot \nabla_x\lambda(x) \rho^{\lambda}_{\gamma}(y-x)w(x)\psi_{\delta}(\xi-\zeta)d\xi d\zeta dy dx ds\\
&& -\sum^d_{j=1}\mathbb{E}\int^t_0\int_{D^{\lambda}_x}\int_{D_y}\int^N_{-N}\int^N_{-N}\Psi_{\eta}(\xi,\zeta) f_1(s)\bar{f}_2(s) a_j(\xi) \rho^{\lambda}_{\gamma}(y-x)\lambda(x)\psi_{\delta}(\xi-\zeta)d\xi d\zeta dy dx ds.
\end{eqnarray*}
Utilizing (\ref{P-11}) again, we deduce that
\begin{eqnarray*}
I_2&=&-\frac{1}{2}\mathbb{E}\int^t_0\int_{D^{\lambda}_x}\int_{D_y}\int^N_{-N}\int^N_{-N}\Psi_{\eta}(\xi,\zeta)\bar{f}_2(s)\partial_{\zeta}\alpha^{\lambda} w(x)G^2(x,\xi)d\nu^1_{x,s}(\xi)d\zeta dy dx ds\\
&=&\frac{1}{2}\mathbb{E}\int^t_0\int_{D^{\lambda}_x}\int_{D_y}\int^N_{-N}\int^N_{-N}\partial_{\zeta}\Big[\Psi_{\eta}(\xi,\zeta)\bar{f}_2(s)\Big]\alpha^{\lambda} w(x)G^2(x,\xi)d\nu^1_{x,s}(\xi)d\zeta dy dx ds\\
&=&\frac{1}{2}\mathbb{E}\int^t_0\int_{D^{\lambda}_x}\int_{D_y}\int^N_{-N}\int^N_{-N}\Psi_{\eta}(\xi,\zeta)\alpha^{\lambda} w(x)G^2(x,\xi)d\nu^1_{x,s}\otimes \nu^{2}_{y,s}(\xi,\zeta) dy dx ds\\
&& -\frac{1}{2}\mathbb{E}\int^t_0\int_{D^{\lambda}_x}\int_{D_y}\int^N_{-N}\int^N_{-N}\Psi_{\eta}(\xi)(\psi_{\eta}(N-\eta-\zeta)-\psi_{\eta}(N-\eta+\zeta))\bar{f}_2(s)\alpha^{\lambda} w(x)G^2(x,\xi)d\nu^1_{x,s}(\xi)d\zeta dy dx ds\\
&=:& I_{2,1}+ I_{2,2}.
\end{eqnarray*}
Similarly, it follows that
\begin{eqnarray*}
I_7&=& -\frac{1}{2}\mathbb{E}\int^t_0\int_{D^{\lambda}_x}\int_{ D_y}\int^N_{-N}\int^N_{-N}\partial_{\xi}[\Psi_{\eta}(\xi,\zeta)f_1(s)]\alpha^{\lambda} w(x)G^2(y,\zeta)d\xi d\nu^2_{y,s}(\zeta)dy dxds\\
&=&\frac{1}{2}\mathbb{E}\int^t_0\int_{D^{\lambda}_x}\int_{ D_y}\int^N_{-N}\int^N_{-N}\Psi_{\eta}(\xi,\zeta)\alpha^{\lambda} w(x)G^2(y,\zeta)d\nu^1_{x,s}\otimes \nu^2_{y,s}(\xi,\zeta)dy dxds\\
&& +\frac{1}{2}\mathbb{E}\int^t_0\int_{D^{\lambda}_x}\int_{ D_y}\int^N_{-N}\int^N_{-N}\Psi_{\eta}(\zeta)(\psi_{\eta}(N-\xi-\eta)-\psi_{\eta}(N+\xi-\eta))f_1(s)\alpha^{\lambda} w(x)G^2(y,\zeta)d\xi d\nu^2_{y,s}(\zeta)dy dxds\\
&=:& I_{7,1}+ I_{7,2}.
\end{eqnarray*}
By integration by parts formula, we have
\begin{eqnarray*}
I_4&=&-\mathbb{E}\int_{[0,t]\times D^{\lambda}_x\times (-N,N)}\int_{D_y}\int^N_{-N}\partial_{\zeta}\Big[\Psi_{\eta}(\xi,\zeta)\bar{f}^+_2(s)\Big]\alpha^{\lambda}w(x)d\zeta dy dm_1(s,x,\xi) \\
&=& \mathbb{E}\int_{[0,t]\times D^{\lambda}_x\times (-N,N)}\int_{D_y}\int^N_{-N}\Psi_{\eta}(\xi)(\psi_{\eta}(N-\zeta-\eta)-\psi_{\eta}(N+\zeta-\eta))\bar{f}^+_2(s)\alpha^{\lambda}w(x)d\zeta dydm_1(s,x,\xi)\\
&& -\mathbb{E}\int_{[0,t]\times D^{\lambda}_x\times (-N,N)}\int_{D_y}\int^N_{-N}\Psi_{\eta}(\xi,\zeta)\alpha^{\lambda}w(x)d\nu^{2,+}_{x,s}(\zeta)dydm_1(s,x,\xi) \\
&\leq& \mathbb{E}\int_{[0,t]\times D^{\lambda}_x\times (-N,N)}\int_{D_y}\int^N_{-N}\Psi_{\eta}(\xi)(\psi_{\eta}(N-\zeta-\eta)-\psi_{\eta}(N+\zeta-\eta))\bar{f}^+_2(s)\alpha^{\lambda}w(x)d\zeta dydm_1(s,x,\xi)\\
&=:& I_{4,1}.
\end{eqnarray*}
Similarly, we can bound $I_9$ as follows
\begin{eqnarray*}
 I_9
 &\leq& -\mathbb{E}\int_{[0,t]\times D_y\times (-N,N)}\int_{D^{\lambda}_x}\int^N_{-N}\Psi_{\eta}(\zeta)(\psi_{\eta}(N-\xi-\eta)-\psi_{\eta}(N+\xi-\eta))
 f^-_1(s)\alpha^{\lambda} w(x)d\xi dx dm_2(s,y,\zeta)\\
&=:& I_{9,1}.
\end{eqnarray*}
By a similar argument as in the proof of Proposition 2 in \cite{KN16} and using Lemma \ref{lem-4}, we obtain
\begin{eqnarray*}
|I_5|&\leq& C\mathbb{E}\int^t_0\int_{D^{\lambda}_x}\int_{D_y}\int^N_{-N}\int^N_{-N}
\Big(\psi_{\eta}(-\eta+N-\xi)+\psi_{\eta}(-\eta+N+\xi)\Big)\\
&& \times\rho^{\lambda}_{\gamma} (y-x)\psi_{\delta}(\xi-\zeta) w(x)(1+|\xi|^2)d\nu^1_{s,x}(\xi)d\zeta dy dx ds\\
&\leq& C\int_{\mathbb{R}}\Big(\psi_{\eta}(-\eta+N-\xi)+\psi_{\eta}(-\eta+N+\xi)\Big)(1+|\xi|^2)\mathbb{E}\int^T_0\int_Dd\nu^1_{s,x}(\xi) dx ds\\
&\leq& C\int_{\mathbb{R}}\Big(\psi_{\eta}(-\eta+N-\xi)+\psi_{\eta}(-\eta+N+\xi)\Big)(1+|\xi|^2)d\mu_{\nu^1}(\xi)\\ &\rightarrow& C(1+N^2)(\mu'_{\nu^1}(N)+\mu'_{\nu^1}(-N)),
\end{eqnarray*}
as $\eta\rightarrow 0+$ , where $\mu_{\nu^1}$ is defined by (\ref{eqq-20}).
Similarly,
\begin{eqnarray*}
|I_6|&\leq&\mathbb{E}\int_{[0,t]\times D^{\lambda}_x\times (-N,N)}\int_{D_y}\int^N_{-N}\Psi_{\eta}(\zeta)\bar{f}^+_2(s)\Big(\psi_{\eta}(-\eta+N-\xi)+\psi_{\eta}(-\eta+N+\xi)\Big)\alpha^{\lambda} w(x)d\zeta dydm_1(s,x,\xi)\\
&\leq& C\mathbb{E}\int_{[0,t]\times D^{\lambda}_x\times (-N,N)}\int_{D_y}\int^N_{-N}(\psi_{\eta}(-\eta+N-\xi)+\psi_{\eta}(-\eta+N+\xi))\rho^{\lambda}_{\gamma} (y-x)\psi_{\delta}(\xi-\zeta)d\zeta dydm_1(s,x,\xi)\\
&\leq& C\int_{\mathbb{R}}(\psi_{\eta}(-\eta+N-\xi)+\psi_{\eta}(-\eta+N+\xi))\mathbb{E}\int^T_0\int_Ddm_1(s,x,\xi)\\
&=& C\int_{\mathbb{R}}(\psi_{\eta}(-\eta+N-\xi)+\psi_{\eta}(-\eta+N+\xi))d\mu_{m_1}(\xi)\\
&\rightarrow& C(\mu'_{m_1}(N)+\mu'_{m_1}(-N)),
\end{eqnarray*}
as $\eta\rightarrow 0+$ , where $\mu_{m_1}$ is defined by (\ref{a-2}).
 By the similar arguments as above, all the  terms  $I_{10}, I_{11}, I_{2,2}, I_{7,2}, I_{4,1}, I_{9,1}$ containing $\psi_{\eta}$, can be estimated from above as $\eta\rightarrow 0+$ by
\begin{eqnarray}\label{eqq-16}
I_N:=C(\mu'_{m_1}(\pm N)+\mu'_{m_2}(\pm N)+(1+N^2)(\mu'_{\nu^1}(\pm N)+\mu'_{\nu^2}(\pm N))).
\end{eqnarray}
Due to Lemma \ref{lem-4}, we see that  $\underset{N\rightarrow \infty}{{\rm{\limsup}}}\ I_N=0$.

Moreover,
 \begin{eqnarray*}
&&I_{12}+ I_{2,1}+I_{7,1}\\
&=&\frac{1}{2}\mathbb{E}\int^t_0\int_{D^{\lambda}_x}\int_{D_y}\int^N_{-N}\int^N_{-N}\lambda(x)w(x)\rho^{\lambda}_{\gamma}(y-x)\psi_{\delta}(\xi-\zeta)
\sum_{k\geq 1}|g_k(x,\xi)-g_k(y,\zeta)|^2d\nu^{1}_{s,x}\otimes \nu^{2}_{s,y}(\xi,\zeta)dy dxds.
\end{eqnarray*}
Combining all the previous estimates and letting $\eta\downarrow0$ in (\ref{eqq-14}), we get
\begin{eqnarray*}
 &&\mathbb{E} \int_{D^{\lambda}_x}\int_{D_y}\int^N_{-N}\int^N_{-N}f^+_1(t)\bar{f}^+_2(t)\alpha^{\lambda}w(x)d\xi d\zeta dy dx\\
 &\leq& \mathbb{E} \int_{D^{\lambda}_x}\int_{D_y}\int^N_{-N}\int^N_{-N}f_{1,0}\bar{f}_{2,0}\alpha^{\lambda}w(x)d\xi d\zeta dy dx \\
 && +\mathbb{E}\int^t_0\int_{\partial D^{\lambda}_x}\int_{D_y}\int^N_{-N}\int^N_{-N}\tilde{f}^{(\lambda)}_1(s)\bar{f}^+_2(s)(-a(\xi)\cdot\mathbf{ n})\alpha^{\lambda}w(x)d\xi d\zeta dy d\sigma(x)ds\\
 &&+\mathbb{E}\int^t_0\int_{D^{\lambda}_x}\int_{D_y}\int^N_{-N}\int^N_{-N} f_1(s)\bar{f}_2(s) (a(\xi)-a(\zeta))\cdot \nabla_x\rho^{\lambda}_{\gamma}(y-x)\lambda(x)w(x)\psi_{\delta}(\xi-\zeta)d\xi d\zeta dxdyds\\
&& +\mathbb{E}\int^t_0\int_{D^{\lambda}_x}\int_{D_y}\int^N_{-N}\int^N_{-N} f_1(s)\bar{f}_2(s) a(\xi)\cdot \nabla_x\lambda(x) \rho^{\lambda}_{\gamma}(y-x)w(x)\psi_{\delta}(\xi-\zeta)d\xi d\zeta  dy dx ds\\
&& -\sum^d_{j=1}\mathbb{E}\int^t_0\int_{D^{\lambda}_x}\int_{D_y}\int^N_{-N}\int^N_{-N} f_1(s)\bar{f}_2(s) a_j(\xi) \rho^{\lambda}_{\gamma}(y-x)\lambda(x)\psi_{\delta}(\xi-\zeta)d\xi d\zeta dy dx ds\\
&& + \frac{1}{2}\mathbb{E}\int^t_0\int_{D^{\lambda}_x}\int_{D_y}\int^N_{-N}\int^N_{-N}\lambda(x)w(x)\rho^{\lambda}_{\gamma}(y-x)\psi_{\delta}(\xi-\zeta)
\sum_{k\geq 1}|g_k(x,\xi)-g_k(y,\zeta)|^2d\nu^{1}_{s,x}\otimes \nu^{2}_{s,y}(\xi,\zeta)dy dxds\\
&& +I_N.
\end{eqnarray*}

A similar bound can be obtained  for
$\mathbb{E} \int_{D^{\lambda}_x}\int_{D_y}\int^N_{-N}\int^N_{-N}\bar{f}^+_1(t)f^+_2(t)\alpha^{\lambda}w(x)d\xi d\zeta dy dx.
$
Adding the above two bounds, we get the desired result (\ref{e-4}) for $f^+_i$.
To obtain the result for  $f^-_i$, we simply take $t_n\uparrow t$, write (\ref{e-4}) for $f^+_i(t_n)$ and let $n\rightarrow \infty$.

\end{proof}
The following  is the so called ``super $L^1_{w;x}-$contration principle'' mentioned in the introduction.
\begin{thm}\label{thm-1}
The kinetic solutions $u(t;\vartheta)$, $u(t;\tilde{\vartheta})$ of $\mathcal{E}(A,\Phi,\vartheta)$ and $\mathcal{E}(A,\Phi,\tilde{\vartheta})$ satisfy
  \begin{eqnarray}\label{e-16-1}
  \mathbb{E}\|u(t;\vartheta)-u(t;\tilde{\vartheta})\|_{L^1_{w;x}}\leq \mathbb{E}\|\vartheta-\tilde{\vartheta}\|_{L^1_{w;x}}-\sum^d_{j=1}\mathbb{E}\int^t_0\int_{ D}|A_j(u(s;\vartheta))-A_j(u(s;\tilde{\vartheta}))| dxds.
  \end{eqnarray}
\end{thm}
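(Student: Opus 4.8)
The plan is to pass to the limit in the doubling-variables estimate (\ref{e-4}) of Proposition \ref{prp-1}: first I would send $N\to\infty$, then let $\gamma\to0$ and $\delta\to0$ along a coupled scaling with $\gamma^{2}\ll\delta\ll\gamma$ (e.g.\ $\delta=\gamma^{3/2}$), and finally sum over the partition of unity $\{\lambda_{i}\}_{i=0,\dots,M}$. Since by Theorem \ref{thm-9} the kinetic solutions have almost surely continuous trajectories in $L^{q}_{x}$, combining this with Proposition \ref{prp-3} identifies the left and right weak limits as $f^{\pm}_{1}(t)=I_{u(t;\vartheta)>\xi}$ and $f^{\pm}_{2}(t)=I_{u(t;\tilde{\vartheta})>\zeta}$ for every $t\in[0,T]$, so one may work directly with the $f^{\pm}_{i}$. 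On the left-hand side of (\ref{e-4}) the integrand is non-negative, so $N\to\infty$ is legitimate by monotone convergence; as $\delta\to0$ the kernel $\psi_{\delta}(\xi-\zeta)$ concentrates on $\{\xi=\zeta\}$ and as $\gamma\to0$ the mollifier $\rho^{\lambda}_{\gamma}(y-x)$ concentrates on $\{y=x\}$ (while keeping $y\in D^{\lambda}$, thanks to its one-sided construction), so the left-hand side converges to $\mathbb{E}\int_{D^{\lambda}_{x}}|u(t,x;\vartheta)-u(t,x;\tilde{\vartheta})|\lambda(x)w(x)\,dx$, and summing over $i$ (using $\sum_{i}\lambda_{i}\equiv1$ on $\bar{D}$) gives the left-hand side of (\ref{e-16-1}). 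The same computation applied to the first term on the right of (\ref{e-4}) yields $\mathbb{E}\|\vartheta-\tilde{\vartheta}\|_{L^{1}_{w;x}}$, and $I_{N}\to0$ by (\ref{eqq-32}). It then remains to identify the limits of $J_{1},\dots,J_{5}$.

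The term $J_{4}$ is the one producing the extra negative contribution in (\ref{e-16-1}), and this is precisely where the choice $\partial_{x_{j}}w\equiv-1$ pays off. Sending $\delta,\gamma\to0$ turns $J_{4}$ into $-\sum_{j=1}^{d}\mathbb{E}\int_{0}^{t}\int_{D^{\lambda}_{x}}\lambda(x)\big(\int_{\mathbb{R}}a_{j}(\xi)(f_{1}\bar{f}_{2}+\bar{f}_{1}f_{2})(s,x,\xi)\,d\xi\big)dx\,ds$. Since $f_{1}(x,\xi)\bar{f}_{2}(x,\xi)=I_{u(s,x;\tilde{\vartheta})\le\xi<u(s,x;\vartheta)}$ and $\bar{f}_{1}(x,\xi)f_{2}(x,\xi)=I_{u(s,x;\vartheta)\le\xi<u(s,x;\tilde{\vartheta})}$, and $a_{j}\ge0$ with $A_{j}$ strictly increasing by (H1), the inner $\xi$-integral equals $|A_{j}(u(s,x;\vartheta))-A_{j}(u(s,x;\tilde{\vartheta}))|$; summing over $i$ gives exactly $-\sum_{j=1}^{d}\mathbb{E}\int_{0}^{t}\int_{D}|A_{j}(u(s;\vartheta))-A_{j}(u(s;\tilde{\vartheta}))|\,dx\,ds$.

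For the remaining terms, $J_{3}$ carries $a(\xi)\cdot\nabla_{x}\lambda(x)$; it survives the $\delta,\gamma\to0$ limits, but its summand depends on $i$ only through $\nabla\lambda_{i}$, so summing and using $\sum_{i}\nabla\lambda_{i}=\nabla(\sum_{i}\lambda_{i})=0$ on $\bar{D}$ makes it vanish. The terms $J_{2}$ and $J_{5}$ tend to $0$: for $J_{5}$, (\ref{equ-29}) bounds its kernel by $C_{0}(|x-y|^{2}+|\xi-\zeta|^{2})$, and on the support of $\rho^{\lambda}_{\gamma}(y-x)\psi_{\delta}(\xi-\zeta)$ one has $|x-y|\le C\gamma$ and $|\xi-\zeta|\le\delta$, which together with $|\xi-\zeta|^{2}\psi_{\delta}(\xi-\zeta)\le C\delta$, $\psi_{\delta}\le C\delta^{-1}$ and $\int\rho^{\lambda}_{\gamma}(y-x)\,dy\le1$ (the measures $\nu^{1},\nu^{2}$ being probability measures) gives $|J_{5}|\le C(\delta+\gamma^{2}\delta^{-1})\to0$; for $J_{2}$, (\ref{qeq-22}) gives $|a(\xi)-a(\zeta)|\le C(1+|\xi|^{q_{0}-1}+|\zeta|^{q_{0}-1})|\xi-\zeta|$, integrating $|\xi-\zeta|\psi_{\delta}(\xi-\zeta)$ in $\zeta$ produces a factor $C\delta$, the factor $f_{1}\bar{f}_{2}+\bar{f}_{1}f_{2}$ confines $\xi$ to a bounded interval governed by $u(s,x;\vartheta)$ and $u(s,y;\tilde{\vartheta})$, and $\int|\nabla_{x}\rho^{\lambda}_{\gamma}(y-x)|\,dy\le C\gamma^{-1}$, so by (\ref{eqq-31}) (which applies since $q_{0}-1<q$) one gets $|J_{2}|\le C\delta\gamma^{-1}\to0$ under the coupling $\delta\ll\gamma$.

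The boundary term $J_{1}$ is where the Dirichlet condition really enters, and it is the step I expect to be the main obstacle. Here I would use the kinetic boundary relations (\ref{eqq-38})--(\ref{eqq-39}) to replace $(-a(\xi)\cdot\mathbf{n})\tilde{f}^{(\lambda)}_{1}$ by $M_{N}f_{1,b}+\partial_{\xi}\bar{m}^{+}_{1,N}$ and $(-a(\xi)\cdot\mathbf{n})\tilde{\bar{f}}^{(\lambda)}_{1}$ by $M_{N}\bar{f}_{1,b}-\partial_{\xi}\bar{m}^{-}_{1,N}$. The contributions of $\partial_{\xi}\bar{m}^{\pm}_{1,N}$ are handled by integration by parts in $\xi$, exploiting $\bar{m}^{\pm}_{1,N}\ge0$ and the vanishing $\bar{m}^{+}_{1,N}\to0$ as $\xi\uparrow N$, $\bar{m}^{-}_{1,N}\to0$ as $\xi\downarrow-N$, so that the residual pieces are absorbed into an $I_{N}$-type quantity and disappear as $N\to\infty$. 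For the $M_{N}$-terms the key point is that both problems carry the \emph{same} boundary datum $f_{1,b}=f_{2,b}=I_{0>\xi}$: after $\gamma\to0$ the interior averages $\int_{D_{y}}\rho^{\lambda}_{\gamma}(y-x)f_{2}(s,y,\cdot)\,dy$ converge to the trace $\tilde{f}^{(\lambda)}_{2}$ (and likewise for $\bar{f}_{2}$), and invoking (\ref{eqq-38})--(\ref{eqq-39}) once more on the second problem turns the remainder into a manifestly non-positive expression (the positive weight $w(x)$ not affecting the sign), exactly as in the uniqueness argument of \cite{KN16}. This yields $\limsup_{\gamma\to0}\limsup_{N\to\infty}J_{1}\le0$. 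Collecting the limits of all the terms in (\ref{e-4}) and discarding the non-positive limit of $J_{1}$ gives (\ref{e-16-1}).
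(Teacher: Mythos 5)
Your proposal is correct and follows essentially the same route as the paper: pass to the limit in (\ref{e-4}), identify $J_4$ (via $\partial_{x_j}w\equiv-1$, the monotonicity of $A_j$ and the identity $\int I_{u_2\le\xi<u_1}a_j(\xi)\,d\xi=|A_j(u_1)-A_j(u_2)|$) as the source of the negative term, kill $J_3$ by $\sum_i\nabla\lambda_i=0$, bound $J_2$ and $J_5$ by $C\delta\gamma^{-1}$ and $C(\gamma^2\delta^{-1}+\delta)$ under a coupled scaling, show the boundary term is non-positive because both problems share the datum $I_{0>\xi}$, and verify that the mollified off-diagonal quantities converge to the diagonal ones (the paper's error terms $\mathcal{E}^{N,\lambda}_t$ and $r^{N,\lambda}_t$, which you assert rather than prove but correctly identify). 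The one point where your stated plan differs from the paper and would need repair is the order of limits: you announce $N\to\infty$ \emph{before} $\gamma,\delta\to0$, but the boundary decomposition (\ref{eqq-38})--(\ref{eqq-39}) carries the constant $M_N=\max_{|\xi|\le N}|a(\xi)|\to\infty$, and the replacement of the interior average $\bar f_2\ast\rho^{\lambda}_{\gamma}$ by the trace $\tilde{\bar f}^{(\lambda)}_2$ (needed to make the $M_N$-terms cancel against the common boundary data) only happens in the limit $\gamma\to0$; so $J_1$ must be handled at fixed $N$ with $\gamma\to0$ first, exactly as the paper does, and your own discussion of $J_1$ in fact implicitly reverts to that order.
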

\begin{proof}
For any $t\geq 0$, $N\in \mathbb{D}$ and any element $\lambda$ of the partition of unity $\{\lambda_i\}$ on $\bar{D}$, define the error term
\begin{eqnarray}\notag
&&\mathcal{E}^{N,\lambda}_t(\gamma,\delta)\\ \notag
&:=&\mathbb{E}\int_{D^{\lambda}_x}\int_{D_y}\int^N_{-N}\int^N_{-N}(f^{\pm}_1(t)\bar{f}^{\pm}_2(t)+\bar{f}^{\pm}_1(t)f^{\pm}_2(t))\rho^{\lambda}_{\gamma}(y-x)\psi_{\delta}(\xi-\zeta)\lambda(x)w(x)d\xi d\zeta dy dx\\
\label{qq-3}
&&-\mathbb{E}\int_{ D^{\lambda}_x}\int^N_{-N}(f^{\pm}_1(x,t,\xi)\bar{f}^{\pm}_2(x,t,\xi)+\bar{f}^{\pm}_1(x,t,\xi)f^{\pm}_2(x,t,\xi))\lambda(x)w(x)d\xi dx.
\end{eqnarray}
Clearly, it can be written as
\begin{eqnarray*}
\mathcal{E}^{N,\lambda}_t(\gamma,\delta)=: H^{N,\lambda}_1(t)+H^{N,\lambda}_2(t),
\end{eqnarray*}
where
\begin{eqnarray}\notag
H^{N,\lambda}_1(t)&=& \mathbb{E}\int_{D^{\lambda}_x}\int_{D_y}\int^N_{-N}\int^N_{-N}(f^{\pm}_1(t)\bar{f}^{\pm}_2(t)+\bar{f}^{\pm}_1(t)f^{\pm}_2(t))\rho^{\lambda}_{\gamma}(y-x)\psi_{\delta}(\xi-\zeta)\lambda(x)w(x)d\xi d\zeta dy dx\\ \notag
&&-\mathbb{E}\int_{D^{\lambda}_x}\int_{D_y}\int^N_{-N}(f^{\pm}_1(t,x,\xi)\bar{f}^{\pm}_2(t,y,\xi)+\bar{f}^{\pm}_1(t,x,\xi){f}^{\pm}_2(t,y,\xi))\rho^{\lambda}_{\gamma}(y-x)\lambda(x)w(x)d\xi dy dx,\\ \notag
H^{N,\lambda}_2(t)&=& \mathbb{E}\int_{D^{\lambda}_x}\int_{D_y}\int^N_{-N}(f^{\pm}_1(t,x,\xi)\bar{f}^{\pm}_2(t,y,\xi)+\bar{f}^{\pm}_1(t,x,\xi){f}^{\pm}_2(t,y,\xi))\rho^{\lambda}_{\gamma}(y-x)\lambda(x)w(x)d\xi dy dx\\ \notag
&& -\mathbb{E}\int_{ D^{\lambda}_x}\int^N_{-N}(f^{\pm}_1(t,x,\xi)\bar{f}^{\pm}_2(t,x,\xi)+\bar{f}^{\pm}_1(t,x,\xi)f^{\pm}_2(t,x,\xi))\lambda(x)w(x)d\xi dx.
\end{eqnarray}
We start with the estimate of $H^{N,\lambda}_1(t)$. Notice that
\begin{eqnarray*}
  &&\int_{D^{\lambda}_x}\int_{D_y}\int^{N}_{-N}\rho^{\lambda}_{\gamma}(y-x)f^{\pm}_1(t,x,\xi)\bar{f}^{\pm}_2(t,y,\xi)\lambda(x)w(x)d\xi dy dx\\
  &=& \int_{D^{\lambda}_x}\int_{D_y}\int^{N}_{-N}\int^{N}_{-N}\rho^{\lambda}_{\gamma}(y-x)\psi_{\delta}(\xi-\zeta)f^{\pm}_1(t,x,\xi)\bar{f}^{\pm}_2(t,y,\xi)\lambda(x)w(x)d\xi d\zeta dy dx\\
  && +\int_{D^{\lambda}_x}\int_{D_y}\int^{N}_{-N}\rho^{\lambda}_{\gamma}(y-x)f^{\pm}_1(t,x,\xi)\bar{f}^{\pm}_2(t,y,\xi)\lambda(x)w(x)\Big(1-\int^{N}_{-N}\psi_{\delta}(\xi-\zeta)d\zeta\Big)d\xi dy dx,
\end{eqnarray*}
which implies
\begin{eqnarray*}\notag
&&\Big|\int_{D^{\lambda}_x}\int_{D_y}\int^{N}_{-N}\rho^{\lambda}_{\gamma}(y-x)
f^{\pm}_1(t,x,\xi)\bar{f}^{\pm}_2(t,y,\xi)\lambda(x)w(x)d\xi dy dx\\ \notag
&&\ -\int_{D^{\lambda}_x}\int_{D_y}\int^{N}_{-N}\int^{N}_{-N}
f^{\pm}_1(t,x,\xi)\bar{f}^{\pm}_2(t,y,\zeta)\rho^{\lambda}_{\gamma}(y-x)\psi_{\delta}(\xi-\zeta)\lambda(x)w(x)d\xi d\zeta dy dx\Big|\\ \notag
&\leq&\Big|\int_{D^{\lambda}_x}\int_{D_y}\rho^{\lambda}_{\gamma}(y-x)\lambda(x)w(x)
\int^N_{-N}I_{u^{\pm}(t,x;\vartheta)>\xi}\int^N_{-N}\psi_{\delta}(\xi-\zeta)(I_{u^{\pm}(t,y;\tilde{\vartheta})\leq\xi}-I_{u^{\pm}(t,y;\tilde{\vartheta})\leq\zeta})d\zeta d\xi dy dx\Big|\\
&& +C\int^{N}_{-N}\Big(1-\int^{N}_{-N}\psi_{\delta}(\xi-\zeta)d\zeta\Big)d\xi\\ \notag
&=:& K^{N,\lambda}_1(\gamma,\delta)+\Upsilon^{N}(\delta).
\end{eqnarray*}

Applying the dominated convergence theorem, we see that
 $\lim_{\delta\rightarrow 0}\Upsilon^{N}(\delta)=0$.
Moreover, by the fact that $\int^{\delta}_0\psi_{\delta}(\zeta')d\zeta'=\int^{0}_{-\delta}\psi_{\delta}(\zeta')d\zeta'=\frac{1}{2}$,  we deduce that
\begin{eqnarray*}\notag
&&K^{N,\lambda}_1(\gamma,\delta)\\ \notag
&\leq&\int_{D^{\lambda}_x}\int_{D_y}\rho^{\lambda}_{\gamma}(y-x)\lambda(x)w(x)\int^N_{-N}I_{u^{\pm}(t,x;\vartheta)>\xi}\int^{\xi}_{(\xi-\delta)\vee (-N)}\psi_{\delta}(\xi-\zeta)I_{\zeta<u^{\pm}(t,y;\tilde{\vartheta})<\xi} d\zeta d\xi dy dx\\ \notag
&&+\int_{D^{\lambda}_x}\int_{D_y}\rho^{\lambda}_{\gamma}(y-x)\lambda(x)w(x)\int^N_{-N}I_{u^{\pm}(t,x;\vartheta)>\xi}\int^{(\xi+\delta)\wedge N}_{\xi}\psi_{\delta}(\xi-\zeta)I_{\xi<u^{\pm}(t,y;\tilde{\vartheta})<\zeta} d\zeta d\xi dy dx\\ \notag
&\leq&\int_{D^{\lambda}_x}\int_{D_y}\rho^{\lambda}_{\gamma}(y-x)\lambda(x)w(x)\int^{N\wedge u^{\pm}(t,x;\vartheta)\wedge (u^{\pm}(t,y;\tilde{\vartheta})+\delta)}_{u^{\pm}(t,y;\tilde{\vartheta})}\int^{\xi}_{(\xi-\delta)\vee (-N)}\psi_{\delta}(\xi-\zeta) d\zeta d\xi dy dx\\ \notag
&& +\int_{D^{\lambda}_x}\int_{D_y}\rho^{\lambda}_{\gamma}(y-x)\lambda(x)w(x)\int^{ u^{\pm}(t,x;\vartheta)\wedge u^{\pm}(t,y;\tilde{\vartheta})}_{-N\vee(u^{\pm}(t,y;\tilde{\vartheta})-\delta)}\int^{(\xi+\delta)\wedge N}_{\xi}\psi_{\delta}(\xi-\zeta) d\zeta d\xi dy dx\\ \notag
&\leq&\int_{D^{\lambda}_x}\int_{D_y}\rho^{\lambda}_{\gamma}(y-x)\lambda(x)w(x)\int^{N\wedge u^{\pm}(t,x;\vartheta)\wedge (u^{\pm}(t,y;\tilde{\vartheta})+\delta)}_{u^{\pm}(t,y;\tilde{\vartheta})}\int^{\delta\wedge (\xi+N)}_{0}\psi_{\delta}(r) dr d\xi dy dx\\ \notag
&& +\int_{D^{\lambda}_x}\int_{D_y}\rho^{\lambda}_{\gamma}(y-x)\lambda(x)w(x)\int^{ u^{\pm}(t,x;\vartheta)\wedge u^{\pm}(t,y;\tilde{\vartheta})}_{-N\vee(u^{\pm}(t,y;\tilde{\vartheta})-\delta)}\int^{0}_{(-\delta)\vee (\xi-N)}\psi_{\delta}(r) drd\xi dy dx\\ \notag
&\leq& \delta\int_{D^{\lambda}_x}\int_{D_y}\rho^{\lambda}_{\gamma}(y-x)\lambda(x)w(x)dy dx\\
&\leq&  C\delta\int_{D^{\lambda}_x}\lambda(x)dx, \quad a.s..
\end{eqnarray*}
Hence, we get
\begin{eqnarray}\notag
&&\Big|\int_{D^{\lambda}_x}\int_{D_y}\int^{N}_{-N}\rho^{\lambda}_{\gamma}(y-x)f^{\pm}_1(t,x,\xi)\bar{f}^{\pm}_2(t,y,\xi)\lambda(x)w(x)d\xi dy dx\\ \notag
&&\ -\int_{D^{\lambda}_x}\int_{D_y}\int^{N}_{-N}\int^{N}_{-N}f^{\pm}_1(t,x,\xi)\bar{f}^{\pm}_2(t,y,\zeta)\rho^{\lambda}_{\gamma}(y-x)\psi_{\delta}(\xi-\zeta)\lambda(x)w(x)d\xi d\zeta dydx\Big|\\
 \label{qeq-14}
&\leq& C\delta\int_{D^{\lambda}_x}\lambda(x)dx+\Upsilon^{N}(\delta), \quad a.s..
\end{eqnarray}
Similarly, it follows that
\begin{eqnarray}\notag
&&\Big|\int_{D^{\lambda}_x}\int_{D_y}\int^{N}_{-N}\rho^{\lambda}_{\gamma}(y-x)\bar{f}^{\pm}_1(t,x,\xi)f^{\pm}_2(t,y,\xi)\lambda(x)w(x)d\xi dy dx\\ \notag
&&\ -\int_{D^{\lambda}_x}\int_{D_y}\int^{N}_{-N}\int^{N}_{-N}\bar{f}^{\pm}_1(t,x,\xi)f^{\pm}_2(t,y,\zeta)\rho^{\lambda}_{\gamma}(y-x)\psi_{\delta}(\xi-\zeta)\lambda(x)w(x)d\xi d\zeta dydx\Big|\\
\label{qeq-14-1}
&\leq & C\delta\int_{D^{\lambda}_x}\lambda(x)dx+\Upsilon^N(\delta), \quad a.s..
\end{eqnarray}
Based on (\ref{qeq-14}) and (\ref{qeq-14-1}), by using the dominated convergence theorem, we obtain
\begin{eqnarray}\notag
  \sum^M_{i=0}|H^{N,\lambda_i}_1(t)|&\leq& C\delta  \sum^M_{i=0}\int_{D^{\lambda_i}_x}\lambda_i(x)dx+2M\Upsilon^{N}(\delta)\\
  \notag
  &=& C\delta  \int_{D}\sum^M_{i=0}\lambda_i(x)dx+2M\Upsilon^{N}(\delta)\\
  \label{rrr-2}
  &\leq& C\delta+2M\Upsilon^{N}(\delta).
\end{eqnarray}
Moreover, by utilizing $\rho^{\lambda}_{\gamma}(y-x)=0$ on $D^{\lambda}_x\times D^c$, it follows that
\begin{eqnarray*}
&&\Big|\int_{D^{\lambda}_x}\int_{D_y}\int^{N}_{-N}\rho^{\lambda}_{\gamma}(y-x)\lambda(x)w(x)f^{\pm}_1(t,x,\xi)\bar{f}^{\pm}_2(t,y,\xi)d\xi dydx\\
&& -\int_{D^{\lambda}_x}\int^N_{-N}f^{\pm}_1(t,x,\xi)\bar{f}^{\pm}_2(t,x,\xi)\lambda(x)w(x)d\xi dx\Big|\\ \notag
&=&\Big|\int_{D^{\lambda}_x}\int_{D}\int^N_{-N}\rho^{\lambda}_{\gamma}(y-x)\lambda(x)w(x)f^{\pm}_1(t,x,\xi)(\bar{f}^{\pm}_2(t,y,\xi)-\bar{f}^{\pm}_2(t,x,\xi))d\xi dydx\Big|\\
&=&\Big|\int_{D^{\lambda}_x}\int_{\{z_i\in (-\gamma, \gamma), z_d\in (\gamma L_{\lambda},\gamma L_{\lambda}+2\gamma)\}}\int^N_{-N}\rho^{\lambda}_{\gamma}(z)\lambda(x)w(x)f^{\pm}_1(t,x,\xi)(\bar{f}^{\pm}_2(t,x+z,\xi)-\bar{f}^{\pm}_2(t,x,\xi))d\xi dzdx\Big|\\
&\leq&\sup_{\{z_i\in (-\gamma, \gamma), z_d\in (\gamma L_{\lambda},\gamma L_{\lambda}+2\gamma)\}}\int_{D^{\lambda}_x}\lambda(x)w(x)\int_{\mathbb{R}}f^{\pm}_1(t,x,\xi)|\bar{f}^{\pm}_2(t,x+z,\xi)-\bar{f}^{\pm}_2(t,x,\xi)|d\xi dx\\ \notag
&\leq& C \sup_{\{z_i\in (-\gamma, \gamma), z_d\in (\gamma L,\gamma L+2\gamma)\}}\int_{D^{\lambda}_x}\lambda(x)\int_{\mathbb{R}}|-f^{\pm}_2(t,x+z,\xi)+I_{0>\xi}-I_{0>\xi}+f^{\pm}_2(t,x,\xi)|d\xi dx\\
&\leq& C \sup_{\{z_i\in (-\gamma, \gamma), z_d\in (\gamma L,\gamma L+2\gamma)\}}\int_{D^{\lambda}_x}\lambda(x)\int_{\mathbb{R}}|\Lambda_{f^{\pm}_2}(t,x+z,\xi)-\Lambda_{f^{\pm}_2}(t,x,\xi)|d\xi dx,
\end{eqnarray*}
where we have used the boundedness of $w$. Hence,
\begin{eqnarray*}
&&\sum^M_{i=0}\Big|\int_{D^{\lambda_i}_x}\int_{D_y}\int^{N}_{-N}\rho^{\lambda_i}_{\gamma}
(y-x)\lambda_i(x)w(x)f^{\pm}_1(t,x,\xi)\bar{f}^{\pm}_2(t,y,\xi)d\xi dydx\\
&& -\int_{D^{\lambda_i}_x}\int^N_{-N}f^{\pm}_1(t,x,\xi)\bar{f}^{\pm}_2(t,x,\xi)\lambda_i(x)w(x)d\xi dx\Big|\\ \notag
&\leq& C \sup_{\{z_i\in (-\gamma, \gamma), z_d\in (\gamma L,\gamma L+2\gamma)\}}\sum^M_{i=0}\int_{D^{\lambda_i}_x}\lambda_i(x)\int_{\mathbb{R}}|\Lambda_{f^{\pm}_2}(t,x+z,\xi)-\Lambda_{f^{\pm}_2}(t,x,\xi)|d\xi dx\\
&=&C \sup_{\{z_i\in (-\gamma, \gamma), z_d\in (\gamma L,\gamma L+2\gamma)\}}\int_{D}\sum^M_{i=0}\lambda_i(x)\int_{\mathbb{R}}|\Lambda_{f^{\pm}_2}(t,x+z,\xi)-\Lambda_{f^{\pm}_2}(t,x,\xi)|d\xi dx\\
&=&C \sup_{\{z_i\in (-\gamma, \gamma), z_d\in (\gamma L,\gamma L+2\gamma)\}}\int_{D}\int_{\mathbb{R}}|\Lambda_{f^{\pm}_2}(t,x+z,\xi)-\Lambda_{f^{\pm}_2}(t,x,\xi)|d\xi dx, \quad a.s..
\end{eqnarray*}
The integrability of $\Lambda_{f^{\pm}_2}$ on $D\times \mathbb{R}$ implies that
\begin{eqnarray*}
&&\lim_{\gamma\rightarrow 0}\sum^M_{i=0}\Big|\int_{D^{\lambda_i}_x}\int_{D_y}\int^{N}_{-N}\rho^{\lambda_i}_{\gamma}(y-x)\lambda_i(x)w(x)f^{\pm}_1(t,x,\xi)\bar{f}^{\pm}_2(t,y,\xi)d\xi dydx\\ \notag
&&\quad\quad\quad -\int_{D^{\lambda_i}_x}\int^N_{-N}f^{\pm}_1(t,x,\xi)\bar{f}^{\pm}_2(t,x,\xi)\lambda_i(x)w(x)d\xi dx\Big|= 0, \quad a.s..
\end{eqnarray*}
Consequently, by the  dominated convergence theorem, we obtain
\begin{eqnarray*}\notag
&&\lim_{\gamma\rightarrow 0}\sum^M_{i=0}\mathbb{E}\Big|\int_{D^{\lambda_i}_x}\int_{D_y}\int^{N}_{-N}\rho^{\lambda_i}_{\gamma}(y-x)\lambda_i(x)w(x)f^{\pm}_1(t,x,\xi)\bar{f}^{\pm}_2(t,y,\xi)d\xi dydx\\
&&\quad\quad\quad -\int_{D^{\lambda_i}_x}\int^N_{-N}f^{\pm}_1(t,x,\xi)\bar{f}^{\pm}_2(t,x,\xi)\lambda_i(x)w(x)d\xi dx\Big|= 0.
\end{eqnarray*}
Similarly, we have
\begin{eqnarray*}\notag
&&\lim_{\gamma\rightarrow 0}\sum^M_{i=0}\mathbb{E}\Big|\int_{D^{\lambda_i}_x}\int_{D_y}\int^{N}_{-N}\rho^{\lambda_i}_{\gamma}(y-x)\lambda_i(x)w(x)\bar{f}^{\pm}_1(t,x,\xi)f^{\pm}_2(t,y,\xi)d\xi dy dx\\
&& \quad\quad\quad-\int_{D^{\lambda_i}_x}\int^N_{-N}\bar{f}^{\pm}_1(t,x,\xi)f^{\pm}_2(t,x,\xi)\lambda_i(x)w(x)d\xi dx\Big|= 0.
\end{eqnarray*}
Thus, we can conclude that
\begin{eqnarray}\label{eqq-7}
\lim_{\gamma\rightarrow 0}\sum^M_{i=0}|H^{N,\lambda_i}_2|(t)=0.
\end{eqnarray}
Combining  (\ref{rrr-2}) with (\ref{eqq-7}), we have for any $t\in [0,T]$
\begin{eqnarray}\label{qq-4}
\lim_{\gamma, \delta\rightarrow 0}\sum^M_{i=0}|\mathcal{E}^{N,\lambda_i}_t(\gamma,\delta)|=0.
\end{eqnarray}
In particular,
\begin{eqnarray}\label{qq-5}
\lim_{\gamma, \delta\rightarrow 0}\sum^M_{i=0}|\mathcal{E}^{N,\lambda_i}_0(\gamma,\delta)|=0.
\end{eqnarray}
Using the dominated convergence theorem, it follows from (\ref{qq-4}) that
\begin{eqnarray}\label{eqq-8}
\lim_{\gamma, \delta\rightarrow 0}\sum^M_{i=0}\int^T_0|\mathcal{E}^{N,\lambda_i}_t(\gamma,\delta)|dt=0.
\end{eqnarray}
For any $t\in [0,T]$ and $N>0$, define the error term
\begin{eqnarray}\notag
&&r^{N,\lambda}_t(\gamma,\delta)\\ \notag
&:=& \mathbb{E}\int^t_0\int_{\partial D^{\lambda}_x}\int_{D_y}\int^N_{-N}\int^N_{-N}(\tilde{f}^{(\lambda)}_1(s)\bar{f}_2(s)+\tilde{\bar{f}}^{(\lambda)}_1(s)f_2(s))(-a(\xi)\cdot\mathbf{ n})\rho^{\lambda}_{\gamma}(y-x)\\ \notag
&& \times \psi_{\delta}(\xi-\zeta)\lambda(x)w(x)d\xi d\zeta dy d\sigma(x)ds\\
\label{eqq-1}
&& - \mathbb{E}\int^t_0\int_{\partial D^{\lambda}_x}\int^N_{-N}(\tilde{f}^{(\lambda)}_1(s)\tilde{\bar{f}}^{(\lambda)}_2(s)+\tilde{\bar{f}}^{(\lambda)}_1(s)\tilde{f}^{(\lambda)}_2(s))(-a(\xi)\cdot\mathbf{ n})\lambda(x)w(x)d\xi d\sigma(x)ds.
\end{eqnarray}
According to Proposition \ref{prp-7}, there exists a subsequence, still denoted by $\{\gamma\}_{\gamma>0}$, such that
 $ \bar{f}_2\ast \rho^{\lambda}_{\gamma}\rightarrow \tilde{\bar{f}}^{(\lambda)}_2$ and $f_2\ast \rho^{\lambda}_{\gamma}\rightarrow \tilde{f}^{(\lambda)}_2$ in the weak star topology in $L^{\infty}(\Omega\times\Sigma^{\lambda}\times \mathbb{R})$, as $\gamma\rightarrow 0$.
 Using similar methods as the estimates of $H^{N,\lambda}_1$, we deduce that for each $N>0, t>0$,
\begin{eqnarray}\label{eqq-2}
  \lim_{\gamma, \delta\rightarrow 0}\sum^M_{i=0}r^{N,\lambda_i}_t(\gamma,\delta)=0.
\end{eqnarray}
Applying the  dominated convergence theorem again, it follows that
\begin{eqnarray}\label{eqq-9}
  \lim_{\gamma, \delta\rightarrow 0}\sum^M_{i=0}\int^T_0r^{N,\lambda_i}_t(\gamma,\delta)dt=0.
\end{eqnarray}
The above estimates imply
\begin{eqnarray}\notag
 &&\mathbb{E}\int_{ D^{\lambda}}\int^N_{-N}(f^{\pm}_1(t,x,\xi)\bar{f}^{\pm}_2(t,x,\xi)+\bar{f}^{\pm}_1(t,x,\xi)f^{\pm}_2(t,x,\xi))\lambda(x)w(x)d\xi dx\\ \notag
 &\leq& \mathbb{E} \int_{D^{\lambda}}\int^N_{-N}(f_{1,0}\bar{f}_{2,0}+\bar{f}_{1,0}f_{2,0})\lambda(x)w(x)d\xi  dx \\ \notag
 && +\mathbb{E}\int^t_0\int_{\partial D^{\lambda}}\int^N_{-N}(\tilde{f}^{(\lambda)}_1(s)\tilde{\bar{f}}^{(\lambda)}_2(s)+\tilde{\bar{f}}^{(\lambda)}_1(s)\tilde{f}^{(\lambda)}_2(s))
 (-a(\xi)\cdot\mathbf{ n})\lambda(x)w(x)d\xi d\sigma(x)ds\\
 \label{eqq-3}
&& +\sum_{i=2}^5J_i+r^{N,\lambda}_t(\gamma,\delta)+\mathcal{E}^{N,\lambda}_t(\gamma,\delta)+\mathcal{E}^{N,\lambda}_0(\gamma,\delta)+I_N,
\end{eqnarray}
where $J_i, i=2,\dots,5$ and $I_N$ were defined in the statement of Proposition 4.1, $I_N$ was defined in (4.5).
  Noting that $a(\xi)\cdot\mathbf{ n}\tilde{\bar{f}}^{(\lambda)}_2=a(\xi)\cdot\mathbf{ n}\tilde{\bar{f}}_2$ a.e. on $[0,T)\times \partial D^{\lambda}\times (-N,N)$, it follows that
  \begin{eqnarray*}
   &&\sum^M_{i=0}\mathbb{E}\int^t_0\int_{\partial D^{\lambda_i}}\int^N_{-N}(\tilde{f}^{(\lambda_i)}_1(s)\tilde{\bar{f}}^{(\lambda_i)}_2(s)+\tilde{\bar{f}}^{(\lambda_i)}_1(s)\tilde{f}^{(\lambda_i)}_2(s))
 (-a(\xi)\cdot\mathbf{n})\lambda_i(x)w(x)d\xi d\sigma(x)ds \\
 &=&\sum^M_{i=0}\mathbb{E}\int^t_0\int_{\partial D^{\lambda_i}}\int^N_{-N}(\tilde{f}^{(\lambda_i)}_1(s)\tilde{\bar{f}}_2(s)+\tilde{\bar{f}}^{(\lambda_i)}_1(s)\tilde{f}_2(s))
 (-a(\xi)\cdot\mathbf{n})\lambda_i(x)w(x)d\xi d\sigma(x)ds \\
 &=& \mathbb{E}\int^t_0\int_{\partial D}\int^N_{-N}\sum^M_{i=0}\lambda_i(x)(\tilde{f}^{(\lambda_i)}_1(s)\tilde{\bar{f}}_2(s)+\tilde{\bar{f}}^{(\lambda_i)}_1(s)\tilde{f}_2(s))
 (-a(\xi)\cdot\mathbf{n})w(x)d\xi d\sigma(x)ds \\
 &=& \mathbb{E}\int^t_0\int_{\partial D}\int^N_{-N}(\tilde{f}_1(s)\tilde{\bar{f}}_2(s)+\tilde{\bar{f}}_1(s)\tilde{f}_2(s))
 (-a(\xi)\cdot\mathbf{n})w(x)d\xi d\sigma(x)ds,
  \end{eqnarray*}
  where we have used the facts that $\sum^M_{i=0}\lambda_i(x)\tilde{f}^{(\lambda_i)}_1=\tilde{f}_1$ and $\sum^M_{i=0}\lambda_i(x)\tilde{\bar{f}}^{(\lambda_i)}_1=\tilde{\bar{f}}_1$.

  Thus, summing (\ref{eqq-3}) over $i=0, \dots, M$, and using $\sum^{M}_{i=0}\lambda_i=1$, we get
  \begin{eqnarray}\notag
 &&\mathbb{E}\int_{ D}\int^N_{-N}(f^{\pm}_1(t,x,\xi)\bar{f}^{\pm}_2(t,x,\xi)+\bar{f}^{\pm}_1(t,x,\xi)f^{\pm}_2(t,x,\xi))w(x)d\xi dx\\ \notag
 &\leq& \mathbb{E} \int_{D}\int^N_{-N}(f_{1,0}\bar{f}_{2,0}+\bar{f}_{1,0}f_{2,0})w(x)d\xi dx\\ \notag
 && +\mathbb{E}\int^t_0\int_{\partial D}\int^N_{-N}(\tilde{f}_1(s)\tilde{\bar{f}}_2(s)+\tilde{\bar{f}}_1(s)\tilde{f}_2(s))
 (-a(\xi)\cdot\mathbf{n})w(x)d\xi d\sigma(x)ds\\
&& +\sum^M_{i=0}
\Big(J_2+J_3+J_4+J_5+r^{N,\lambda_i}_t(\gamma,\delta)
+\mathcal{E}^{N,\lambda_i}_t(\gamma,\delta)+\mathcal{E}^{N,\lambda_i}_0(\gamma,\delta)+I_N\Big).\label{eqq-3-1}
\end{eqnarray}
From the proof of Theorem 15 in \cite{D-V-1}, it is known that
\begin{eqnarray*}
\sum^M_{i=0}|J_2|\leq CM\delta\gamma^{-1},\quad \sum^M_{i=0}|J_5|\leq CM(\gamma^2\delta^{-1}+\delta).
\end{eqnarray*}
For the boundary term, according to (3.11) in \cite{KN16},
\begin{eqnarray*}
\int^N_{-N}(\tilde{f}_1(s)\tilde{\bar{f}}_2(s)+\tilde{\bar{f}}_1(s)\tilde{f}_2(s))
 (-a(\xi)\cdot\mathbf{n})d\xi
 \leq |a(0)|\int_{\mathbb{R}}(f_{1,b}\bar{f}_{2,b}+\bar{f}_{1,b} f_{2,b})d\xi.
\end{eqnarray*}
Thus, we deduce from (\ref{eqq-3-1}) that
 \begin{eqnarray*}\notag
 &&\mathbb{E}\int_{ D}\int^N_{-N}(f^{\pm}_1(t)\bar{f}^{\pm}_2(t)+\bar{f}^{\pm}_1(t)f^{\pm}_2(t))w(x)d\xi dx\\ \notag
 &\leq& \mathbb{E} \int_{D}\int^N_{-N}(f_{1,0}\bar{f}_{2,0}+\bar{f}_{1,0}f_{2,0})w(x)d\xi dx \\ \notag
 && +|a(0)|\mathbb{E}\int^t_0\int_{\partial D}\int_{\mathbb{R}}(f_{1,b}\bar{f}_{2,b}+\bar{f}_{1,b} f_{2,b})w(x)d\xi d\sigma(x)ds\\
&& +CM\delta\gamma^{-1}+CM(\gamma^2\delta^{-1}+\delta)
+\sum^M_{i=0}\Big(J_3+J_4+r^{N,\lambda_i}_t(\gamma,\delta)+\mathcal{E}^{N,\lambda_i}_t(\gamma,\delta)+\mathcal{E}^{N,\lambda_i}_0(\gamma,\delta)+I_N\Big).
\end{eqnarray*}
On the other hand, by  (3.9) in \cite{KN16}, we have
\begin{eqnarray}\label{eqq-34}
\lim_{\gamma,\delta\rightarrow 0}\sum^M_{i=0}J_3=\mathbb{E}\int^t_0\int_{D}\int^N_{-N} (f_1(s)\bar{f}_2(s)+\bar{f}_1(s)f_2(s)) a(\xi)\cdot \nabla_x\Big(\sum^M_{i=0}\lambda_i(x)\Big) d\xi dx ds=0.
\end{eqnarray}
Applying the similar method as in the proof of (\ref{qq-4}), and utilizing (\ref{eqq-31}) and $\max_{\xi\in[-N,N]}|a(\xi)|= M_N$, we obtain
\begin{eqnarray}\label{eqq-35}
\lim_{\gamma,\delta\rightarrow 0}\sum^M_{i=0}J_4=-\sum^d_{j=1}\mathbb{E}\int^t_0\int_{D}\int^N_{-N} (f_1(s)\bar{f}_2(s)+\bar{f}_1(s)f_2(s)) a_j(\xi) d\xi dx ds.
\end{eqnarray}
Now, taking $\delta=\gamma^{\frac{4}{3}}$ and letting $\gamma\rightarrow 0$, we deduce from (\ref{qq-4}), (\ref{qq-5}), (\ref{eqq-2}), (\ref{eqq-34}) and (\ref{eqq-35}) that
\begin{eqnarray}\notag
 &&\mathbb{E}\int_{ D}\int^N_{-N}(f^{\pm}_1(t)\bar{f}^{\pm}_2(t)+\bar{f}^{\pm}_1(t)f^{\pm}_2(t))w(x)d\xi dx\\ \notag
 &\leq& \mathbb{E} \int_{D}\int^N_{-N}(f_{1,0}\bar{f}_{2,0}+\bar{f}_{1,0}f_{2,0})w(x)d\xi dx \\ \notag
 && -\sum^d_{j=1}\mathbb{E}\int^t_0\int_{D}\int^N_{-N} (f_1(s)\bar{f}_2(s)+\bar{f}_1(s)f_2(s)) a_j(\xi) d\xi dx ds\\
 \label{a-7}
 && +|a(0)|\mathbb{E}\int^t_0\int_{\partial D}\int_{\mathbb{R}}(f_{1,b}\bar{f}_{2,b}+\bar{f}_{1,b} f_{2,b})w(x)d\xi d\sigma(x)ds
+\sum^M_{i=0}I_N.
\end{eqnarray}
Recall that $f_1(t,x,\xi)=I_{u(t,x;\vartheta)>\xi}$,  $f_2(t,x,\xi)=I_{u(t,x;\tilde{\vartheta})>\xi}$. Denote by $u_1=u_1(s)=u(s;\vartheta), u_2=u_2(s)=u(s;\tilde{\vartheta})$. Next, we prove that for any $1\leq j\leq d$,
\begin{eqnarray}\label{a-6}
  &&\lim_{N\rightarrow \infty}\mathbb{E}\int^t_0\int_{D}\int^N_{-N} (f_1(s)\bar{f}_2(s)+\bar{f}_1(s){f}_2(s)) a_j(\xi) d\xi dx ds\nonumber\\
  &=&\lim_{N\rightarrow \infty}\mathbb{E}\int^t_0\int_{D}\int^N_{-N} (I_{u_1>\xi}\bar{I}_{u_2>\xi} +\bar{I}_{u_1>\xi}I_{u_2>\xi}) a_j(\xi) d\xi dx ds\nonumber\\
  &=&\mathbb{E}\int^t_0\int_{D}|A_j(u_1)-A_j(u_2)|dxds.
\end{eqnarray}
As $A_j$ is increasing, we have
\begin{eqnarray*}
\int^N_{-N} I_{u_1>\xi}\bar{I}_{u_2>\xi}a_j(\xi) d\xi&=&(A_j(u_1\wedge N)-A_j(u_2\vee (-N)))^+, \\
\int^N_{-N}\bar{I}_{u_1>\xi}I_{u_2>\xi} a_j(\xi) d\xi&=&(A_j(u_1\vee (-N))-A_j(u_2\wedge N))^-.
\end{eqnarray*}
On the other hand, it follows that
\begin{eqnarray*}
|(A_j(u_1\wedge N)-A_j(u_2\vee (-N)))^+|&\leq & C(|u_1|^{q_0+1}+|u_1|^{q_0+1}) , \\
|(A_j(u_1\vee (-N))-A_j(u_2\wedge N))^-|&\leq& C(|u_1|^{q_0+1}+|u_1|^{q_0+1}).
\end{eqnarray*}
Now, (\ref{a-6}) follows from the dominated convergence theorem.

Letting $N\rightarrow \infty$ in (\ref{a-7}), using (\ref{eqq-32}) and (\ref{a-6}), and  the dominated convergence theorem, we obtain
\begin{eqnarray}\notag
 &&\mathbb{E}\int_{ D}\int_{\mathbb{R}}(f^{\pm}_1(t)\bar{f}^{\pm}_2(t)+\bar{f}^{\pm}_1(t)f^{\pm}_2(t))w(x)d\xi dx\\ \notag
 &\leq& \mathbb{E} \int_{D}\int_{\mathbb{R}}(f_{1,0}\bar{f}_{2,0}+\bar{f}_{1,0}f_{2,0})w(x)d\xi dx \\ \notag
 && -\sum^d_{j=1}\mathbb{E}\int^t_0\int_{ D}|A_j(u(s;\vartheta))-A_j(u(s;\tilde{\vartheta}))|dx ds\\
 \label{eqq-21}
 && +|a(0)|\mathbb{E}\int^t_0\int_{\partial D}\int_{\mathbb{R}}(f_{1,b}\bar{f}_{2,b}+\bar{f}_{1,b} f_{2,b})w(x)d\xi d\sigma(x)ds.
\end{eqnarray}
Since $f_1(t,x,\xi)=I_{u(t,x;\vartheta)>\xi}$,  $f_2(t,x,\xi)=I_{u(t,x;\tilde{\vartheta})>\xi}$, $f_{i,b}=I_{0>\xi}$, $i=1,2$, $f_{1,0}=I_{\vartheta>\xi}$, $f_{2,0}=I_{\tilde{\vartheta}>\xi} $, using
\begin{eqnarray}\label{equ-1}
\int_{\mathbb{R}}I_{u>\xi}\overline{I_{v>\xi}}d\xi=(u-v)^+,
\quad
\int_{\mathbb{R}}\overline{I_{u>\xi}}I_{v>\xi}d\xi=(u-v)^-,
\end{eqnarray}
we deduce from (\ref{eqq-21}) that
\begin{eqnarray*}
&&\mathbb{E}\int_{ D}|u(t;\vartheta)-u(t;\tilde{\vartheta})|w(x)dx\\
&\leq& \mathbb{E}\int_{ D}|\vartheta-\tilde{\vartheta}|w(x) dx-\sum^d_{j=1}\mathbb{E}\int^t_0\int_{ D}|A_j(u(s;\vartheta))-A_j(u(s;\tilde{\vartheta}))|dx ds.
\end{eqnarray*}



\end{proof}

\begin{proof}[\textbf{Proof of Theorem \ref{thm-8}}]
As a consequence of Theorem 4.2, we have
\begin{eqnarray*}
  \mathbb{E}\|u(t;\vartheta)-u(t;\tilde{\vartheta})\|_{L^1_{w;x}}\leq \mathbb{E}\|\vartheta-\tilde{\vartheta}\|_{L^1_{w;x}}.
  \end{eqnarray*}

\end{proof}
\

At the end of this section, we mention that with the help of Lemma \ref{lem-5}, along the same arguments as in the proof of Proposition \ref{prp-1} and Theorem \ref{thm-1}, we also can prove the following result.
\begin{lemma}\label{lem-2}
 Let $u(t;\vartheta), u(t;\tilde{\vartheta})$ be kinetic solutions of $\mathcal{E}(A,\Phi,\vartheta)$ and $ \mathcal{E}(A,\Phi,\tilde{\vartheta})$ on $[0,T]$, respectively. Under Hypotheses (H1)-(H3), for almost every $0\leq s<t\leq T$, we have
  \begin{eqnarray}\label{e-19}
  \mathbb{E}\|u(t;\vartheta)-u(t;\tilde{\vartheta})\|_{L^1_{w;x}}\leq \mathbb{E}\|u(s;\vartheta)-u(s;\tilde{\vartheta})\|_{L^1_{w;x}}-\sum^d_{j=1}\mathbb{E}\int^t_s\int_{ D}|A_j(u(r;\vartheta))-A_j(u(r;\tilde{\vartheta}))| dxdr.
  \end{eqnarray}

\end{lemma}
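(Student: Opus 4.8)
The plan is to reproduce, essentially verbatim, the doubling–of–variables argument used in Proposition \ref{prp-1} and Theorem \ref{thm-1}, but with the kinetic formulation initialized at time $s$ rather than at $0$; the only genuinely new ingredient is Lemma \ref{lem-5}, which is precisely the $s$-shifted analogue of (\ref{eq-1}) (together with its conjugate, and the corresponding identities for $f_2,\bar f_2$, obtained by the same substitution of the starting point). So I would first fix $s<t$ in the full $dt$-measure subset of $[0,T]$ on which, by Proposition \ref{prp-3}, $f_i^{s\pm}=f_i(s)$ and $f_i^{t\pm}=f_i(t)$ for $i=1,2$; on this set the weak limits appearing below coincide with the genuine solution values $u_i(s),u_i(t)$. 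Then, with the doubled test function $\alpha^{\lambda}=\lambda(x)\rho^{\lambda}_{\gamma}(y-x)\psi_{\delta}(\xi-\zeta)$ and the cutoff $\Psi_{\eta}$, one combines the Lemma \ref{lem-5} identity for the pair $(f_1,\bar f_2)$ with the one for $(\bar f_1,f_2)$. The algebraic steps are the ones already carried out after (\ref{eqq-14}): the relations (\ref{P-11}) together with $\partial_{x_i}w=-1$ split the transport term into $J_2,J_3,J_4$; the It\^{o}-correction and kinetic-measure contributions carrying a factor $\psi_{\eta}(N\mp\,\cdot\,-\eta)$ are collected, after $\eta\downarrow 0$, into a single quantity $I_N$ of the form (\ref{eqq-16}), for which $\limsup_{N\to\infty}I_N=0$ by Lemma \ref{lem-4}; and the remaining noise contributions recombine into the term $J_5$ built from $\tfrac12\sum_k|g_k(x,\xi)-g_k(y,\zeta)|^2$. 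The outcome is, for every $N\in\mathbb{D}$ and every $\lambda_i$, the inequality (\ref{e-4}) with the time integrals now running over $(s,t]$ and with $f_{1,0}\bar f_{2,0}+\bar f_{1,0}f_{2,0}$ replaced by $f^{+}_{1,s}\bar f^{+}_{2,s}+\bar f^{+}_{1,s}f^{+}_{2,s}$.

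Next I would pass to the limit exactly as in the proof of Theorem \ref{thm-1}: first $\eta\downarrow 0$, then $\gamma,\delta\to 0$ with $\delta=\gamma^{4/3}$. The error terms $\mathcal{E}^{N,\lambda}_t(\gamma,\delta)$, $\mathcal{E}^{N,\lambda}_s(\gamma,\delta)$ and the boundary error $r^{N,\lambda}_t(\gamma,\delta)$ tend to $0$ by the very estimates that produced (\ref{qq-4}), (\ref{qq-5}) and (\ref{eqq-2}), since those estimates only use the integrability of $\Lambda_{f_i^{\pm}}$ at a fixed time and hence apply verbatim at $s$ and at $t$. Summing over the partition of unity and using $\sum_i\lambda_i\equiv 1$, $\sum_i\lambda_i\tilde f_1^{(\lambda_i)}=\tilde f_1$ and $\sum_i\lambda_i\tilde{\bar{f}}_1^{(\lambda_i)}=\tilde{\bar{f}}_1$, the boundary contribution is bounded by $|a(0)|\,\mathbb{E}\int_s^t\int_{\partial D}\int_{\mathbb{R}}(f_{1,b}\bar f_{2,b}+\bar f_{1,b}f_{2,b})\,w\,d\xi\,d\sigma(x)\,dr$, which vanishes because $f_{1,b}=f_{2,b}=I_{0>\xi}$ forces $f_{1,b}\bar f_{2,b}=\bar f_{1,b}f_{2,b}=0$. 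The bounds $\sum_i|J_2|\le CM\delta\gamma^{-1}$ and $\sum_i|J_5|\le CM(\gamma^2\delta^{-1}+\delta)$ then kill $J_2$ and $J_5$ in the limit $\gamma\to 0$, while $\sum_iJ_3\to 0$ and $\sum_iJ_4\to-\sum_{j=1}^d\mathbb{E}\int_s^t\int_D\int_{-N}^N(f_1\bar f_2+\bar f_1 f_2)a_j\,d\xi\,dx\,dr$, exactly as in (\ref{eqq-34})--(\ref{eqq-35}).

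Finally I would let $N\to\infty$, using $\limsup_N I_N=0$ together with the identities $\int_{-N}^N I_{u_1>\xi}\bar I_{u_2>\xi}a_j\,d\xi=(A_j(u_1\wedge N)-A_j(u_2\vee(-N)))^+$ and $\int_{-N}^N \bar I_{u_1>\xi}I_{u_2>\xi}a_j\,d\xi=(A_j(u_1\vee(-N))-A_j(u_2\wedge N))^-$, whose limits are obtained by dominated convergence from (\ref{eqq-31}) and the polynomial growth assumed in (H1), exactly as in (\ref{a-6}). This yields the kinetic inequality integrated against $w(x)$ with terminal and initial data evaluated at $t$ and at $s$, and converting it back by means of $\int_{\mathbb{R}}I_{u>\xi}\overline{I_{v>\xi}}\,d\xi=(u-v)^+$, $\int_{\mathbb{R}}\overline{I_{u>\xi}}I_{v>\xi}\,d\xi=(u-v)^-$ and $f_i^{t\pm}=f_i(t)$, $f_i^{s\pm}=f_i(s)$ produces precisely (\ref{e-19}).

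I do not expect a serious obstacle here: the entire estimate is a line-by-line rerun of Proposition \ref{prp-1} and Theorem \ref{thm-1}. The only point deserving care is the identification of the time slices at the generic times $s$ and $t$ — one must restrict to the full-measure set of Proposition \ref{prp-3} so that $f_i^{s\pm}=u_i(s)$ and $f_i^{t\pm}=u_i(t)$, and one must confirm that the $\gamma,\delta\to 0$ reduction of $\mathcal{E}^{N,\lambda}_s(\gamma,\delta)$ (and of $\mathcal{E}^{N,\lambda}_t(\gamma,\delta)$) still goes through; but this is literally the computation behind (\ref{qq-4})--(\ref{qq-5}), which does not depend on the choice of time point, so it carries over unchanged. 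Hence the restriction to almost every $0\le s<t\le T$ in the statement, and nothing more delicate is needed.
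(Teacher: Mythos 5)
Your proposal is correct and follows essentially the same route as the paper, which itself only remarks that the lemma is obtained "with the help of Lemma \ref{lem-5}, along the same arguments as in the proof of Proposition \ref{prp-1} and Theorem \ref{thm-1}." You have simply made explicit the points the paper leaves implicit — in particular the restriction to the full-measure set of times where $f_i^{s\pm}=f_i(s)$ and $f_i^{t\pm}=f_i(t)$, and the observation that $f_{1,b}\bar f_{2,b}=\bar f_{1,b}f_{2,b}=0$ — and these are exactly the right points.
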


\section{Proof of the continuity extension in the weighted space}

In this section, we will prove that the kinetic solution admits a continuous extension in the time variable. To this end,
for $n\geq 1$, we consider the following approximating equation:
\begin{eqnarray}\label{ee-2-1}
\left\{
  \begin{array}{ll}
  du^{n}-\frac{1}{n} \Delta u^{n}dt+div(A(u^{n}))dt=\Phi(u^{n}) dW(t) \quad \text{in}\ \ \Omega\times D\times (0,T),\\
u^{n}(\cdot,0)=\vartheta\in L^{q}(\Omega\times D) \quad \text{in}\ \  \Omega\times D,\\
u^{n}=0\quad \text{on}\  \ \Omega\times\Sigma.
  \end{array}
\right.
\end{eqnarray}
According to \cite{GR00}, for any $n\geq 1$, (\ref{ee-2-1}) admits a unique continuous solution $u^{n}\in L^2(\Omega;C([0,T];H)\cap L^2([0,T];H^1( D)))$ satisfying that for all $p\in [2,q]$,
\begin{eqnarray}\label{rr-5-1}
 \mathbb{E}\sup_{t\in [0,T]}\|u^{n}(t)\|^p_{L^p_x}+\frac{1}{n}\mathbb{E}\int^T_0\|\nabla u^{n} \|^2_{L^2_x}dt\leq C(1+\mathbb{E}\|\vartheta\|^p_{L^p_x})\leq C(1+\mathbb{E}\|\vartheta\|^{q}_{L^{q}_x}),
\end{eqnarray}
where $C$ is independent of $n$.

On the other hand, employing the techniques in \cite{KN16} and \cite{DHV}, we can derive the following kinetic formulation satisfied by $f=I_{u^n>\xi}$. Firstly, for any $\phi\in C_b(\mathbb{R})$, a chain rule formula holds true:
\begin{eqnarray}\label{P-24}
\partial_{x_i} \int^{u^n}_0\phi(r)dr=\phi(u^n)\partial_{x_i} u^n,\quad  \text{in}\  \mathcal{D}'(D)\ \ a.e.\ (\omega, t).
\end{eqnarray}
Moreover, there exists a kinetic measure $m^{n}$ and for any $N>0$, there exist nonnegative functions $\bar{m}^{\pm}_{N,n} \in L^1(\Omega\times \Sigma\times (-N,N))$ such that $\{\bar{m}^{\pm}_{N,n}(t)\}$ are predictable,  $\underset{\xi\uparrow N}{{\rm{\lim}}}\ \bar{m}^{+}_{N,n}(t,x,\xi)=\underset{\xi\downarrow -N}{{\rm{\lim}}}\ \bar{m}^{-}_{N,n}(t,x,\xi)=0$, and for all $\varphi\in C^{\infty}_c( [0,T)\times \bar{D}\times (-N,N))$, $f=I_{u^n>\xi}$ satisfies
\begin{eqnarray}\notag
&&\int^T_0\langle f(t), \partial_t \varphi(t)\rangle dt+\langle f_0, \varphi(0)\rangle +\int^T_0\langle f(t), a(\xi)\cdot \nabla \varphi (t)+\frac{1}{n} \Delta \varphi (t)\rangle dt+M_N\int_{\Sigma\times \mathbb{R}}f_b\varphi d\xi d\sigma(x)dt \\ \notag
&=& -\sum_{k\geq 1}\int^T_0\int_{ D}\int_{\mathbb{R}} g_k(x,\xi)\varphi (t,x,\xi)d\nu_{t,x}(\xi)dxd\beta_k(t) \\ \notag
&& -\frac{1}{2}\int^T_0\int_{ D}\int_{\mathbb{R}}\partial_{\xi}\varphi (t,x,\xi)G^2(x,\xi)d\nu_{t,x}(\xi)dxdt+q^{n}(\partial_{\xi} \varphi)\\
\label{ee-7-1}
&& +m^{n}(\partial_{\xi} \varphi)+\int_{\Sigma\times \mathbb{R}}\partial_{\xi} \varphi\bar{m}^+_{N,n} d\xi d\sigma(x)dt,\  a.s.,
\end{eqnarray}
and $\bar{f}:=1-f$ satisfies
\begin{eqnarray}\notag
&&\int^T_0\langle \bar{f}(t), \partial_t \varphi(t)\rangle dt+\langle \bar{f}_0, \varphi(0)\rangle +\int^T_0\langle \bar{f}(t), a(\xi)\cdot \nabla \varphi (t)+\frac{1}{n} \Delta \varphi (t)\rangle dt+M_N\int_{\Sigma\times \mathbb{R}}\bar{f}_b\varphi d\xi d\sigma(x)dt \\ \notag
&=& \sum_{k\geq 1}\int^T_0\int_{ D}\int_{\mathbb{R}} g_k(x,\xi)\varphi (t,x,\xi)d\nu_{t,x}(\xi)dxd\beta_k(t) \\ \notag
&& +\frac{1}{2}\int^T_0\int_{ D}\int_{\mathbb{R}}\partial_{\xi}\varphi (t,x,\xi)G^2(x,\xi)d\nu_{t,x}(\xi)dxdt+q^{n}(\partial_{\xi} \varphi)\\
\label{eqq-17}
&& -m^{n}(\partial_{\xi} \varphi)-\int_{\Sigma\times \mathbb{R}}\partial_{\xi} \varphi\bar{m}^-_{N,n} d\xi d\sigma(x)dt,\  a.s.,
\end{eqnarray}
where $f_0=I_{\vartheta>\xi}$, $f_b=I_{0>\xi}$, $\nu=-\partial_{\xi}f=\partial_{\xi}\bar{f}=\delta_{u^n=\xi}$ and $q^n: \Omega\rightarrow M^+_0(D\times[0,T]\times \mathbb{R})$ is defined as follows:
for any $\phi\in C_b(D\times[0,T]\times \mathbb{R})$
\[
q^n(\phi)=\frac{1}{n}\int^T_0\int_{D}\int_{\mathbb{R}}\phi(t,x,\xi)|\nabla u^{n}|^2d\delta_{u^n=\xi}dxdt.
\]
For simplicity, we write $q^n=\frac{1}{n}|\nabla u^n|^2\delta_{u^n=\xi}$.
For any $\vartheta_1, \vartheta_2\in L^{q}_{\omega}L^{q}_{x}$, denote by $u^n(t;\vartheta_1), u^n(t;\vartheta_2)$ the solutions of (\ref{ee-2-1}), respectively. Applying the doubling variables method again, similar to the proof of Proposition 4.1 we can show the following comparison theorem associated to $u^n(t;\vartheta_1)$ and $u^n(t;\vartheta_2)$.
\begin{lemma}\label{lem-3}
For any $t\in (0,T)$, $\gamma,\delta>0$, $N\in \mathbb{D}$, and any element $\lambda$ of the partition of unity $\{\lambda_i\}_{i=0,1,\dots,M}$ on $\bar{D}$, the functions $f_1(t):=f_1(t,x,\xi)=I_{u^{n}(t,x;\vartheta_1)>\xi}$ and $f_2(t):=f_2(t,y,\zeta)=I_{u^{n}(t,y;\vartheta_2)>\zeta}$ with data $f_{1,0}=I_{\vartheta_1>\xi}, f_{2,0}=I_{\vartheta_2>\zeta}, f_{i,b}=I_{0>\xi}, i=1,2,$ satisfy
\begin{eqnarray}\notag
 &&\mathbb{E} \int_{D^{\lambda}_x}\int_{D_y}\int^N_{-N}\int^N_{-N}(f^{\pm}_1(t)\bar{f}^{\pm}_2(t)+\bar{f}^{\pm}_1(t)f^{\pm}_2(t))\rho^{\lambda}_{\gamma}(y-x)\psi_{\delta}(\xi-\zeta)\lambda(x)w(x)d\xi d\zeta dy dx\\ \notag
 &\leq& \mathbb{E} \int_{D^{\lambda}_x}\int^N_{-N}(f_{1,0}\bar{f}_{2,0}+\bar{f}_{1,0}f_{2,0})\lambda(x)w(x)d\xi dx +\mathcal{E}^{N,\lambda}_0(\gamma,\delta)\\
\notag
&& +|a(0)|\mathbb{E} \int^{t}_0\int_{\partial D^{\lambda}_{x}}\int_{\mathbb{R}}(f_{1,b}\bar{f}_{2,b}+\bar{f}_{1,b}f_{2,b})w(x)d\xi d\sigma(x)ds\\ \notag
&& -2 n^{-1}\sum^d_{j=1} \mathbb{E}\int^t_0\int_{D^{\lambda}_x}\int^N_{-N} (f_1(s)\bar{f}_2(s)+\bar{f}_1(s)f_2(s)) \partial_{x_j} \lambda(x) d\xi dxds\\ \notag
&& +n^{-1} \mathbb{E}\int^t_0\int_{D^{\lambda}_x}\int^N_{-N} (f_1(s)\bar{f}_2(s)+\bar{f}_1(s)f_2(s))  w(x)\Delta_x \lambda(x) d\xi dxds\\
 \label{e-22}
&&+r^{N,\lambda}_t(\gamma,\delta)+\tilde{r}^{N,\lambda}_t(\gamma,\delta)+C\delta \gamma^{-1}+C(\gamma^2\delta^{-1}+\delta)+\tilde{L}^{N,\lambda}_1+\tilde{L}^{N,\lambda}_2+I_N,
  \end{eqnarray}
  where
  \begin{eqnarray*}
\tilde{L}^{N,\lambda}_1&=&\mathbb{E}\int^t_0\int_{D^{\lambda}_x}\int_{D_y}\int^N_{-N}\int^N_{-N} (f_1(s)\bar{f}_2(s)+\bar{f}_1(s)f_2(s)) a(\xi)\cdot \nabla_x\lambda(x) \rho^{\lambda}_{\gamma}(y-x)w(x)\psi_{\delta}(\xi-\zeta)d\xi d\zeta dy dx ds,\\
\tilde{L}^{N,\lambda}_2&=&-\sum^d_{j=1}\mathbb{E}\int^t_0\int_{D^{\lambda}_x}\int_{D_y}\int^N_{-N}\int^N_{-N} (f_1(s)\bar{f}_2(s)+\bar{f}_1(s)f_2(s)) a_j(\xi) \rho^{\lambda}_{\gamma}(y-x)\lambda(x)\psi_{\delta}(\xi-\zeta)d\xi d\zeta dy dx ds,\\
  I_N&=&C(\mu'_{q^n_1}(\pm N)+\mu'_{m^n_1}(\pm N)+\mu'_{q^n_2}(\pm N)+\mu'_{m^n_2}(\pm N)+(1+N^2)\mu'_{\nu^1}(\pm N)+(1+N^2)\mu'_{\nu^2}(\pm N)).
\end{eqnarray*}
Here, $\tilde{r}^{N,\lambda}_t(\gamma,\delta)$ is defined by (\ref{s-3}), the Young measures $\nu_1=\delta_{u^{n}(x;\vartheta_1)=\xi}$, $\nu_2=\delta_{u^{n}(y;\vartheta_2)=\zeta}$ and $m^n_1$, $q^{n}_1=\frac{1}{n}|\nabla_x u^{n}(x;\vartheta_1)|^2\delta_{u^{n}(x;\vartheta_1)=\xi}$, $m^n_2$, $q^{n}_2=\frac{1}{n}|\nabla_y u^{n}(y;\vartheta_2)|^2\delta_{u^{n}(y;\vartheta_2)=\zeta}$ are two pairs kinetic measures corresponding to $u^n(t,x;\vartheta_1)$ and $u^n(t,y;\vartheta_2)$, respectively. In addition, $\mu'_{q^n_i}, i=1,2$ are defined by the same way as $m^n_i$ in (\ref{a-2}) satisfying Lemma \ref{lem-4}, which implies that $\underset{N\rightarrow \infty}{{\rm{\limsup}}}\ I_N=0$.

\end{lemma}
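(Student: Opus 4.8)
The plan is to rerun the doubling-of-variables computation that underlies Proposition \ref{prp-1} and the proof of Theorem \ref{thm-1}, now with the viscous kinetic formulations (\ref{ee-7-1})--(\ref{eqq-17}) in place of (\ref{P-21})--(\ref{P-21-1}). Writing $u^n_i:=u^n(\cdot;\vartheta_i)$, I would first record, exactly as in Propositions \ref{prp-3} and \ref{prp-7} applied to $u^n$ (which lies in $C([0,T];H)$), the strengthened kinetic formulation for $f=I_{u^n>\xi}$, which in the viscous case additionally carries the parabolic transport term $\tfrac{1}{n}\langle f,\Delta\varphi\rangle$ and the non-negative dissipation measure $q^n=\tfrac{1}{n}|\nabla u^n|^2\delta_{u^n=\xi}$ on the right-hand side. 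Then I would take $f_1=I_{u^n_1>\xi}$ in the variables $(x,\xi)$ and $\bar f_2=I_{u^n_2\le\zeta}$ in $(y,\zeta)$, form the product, pass to expectations, and specialize the test function to $\alpha^{\lambda}w=\lambda(x)w(x)\rho^{\lambda}_{\gamma}(y-x)\psi_{\delta}(\xi-\zeta)$, adding the analogous identity for the conjugate product $\bar f_1 f_2$. Just as in Theorem 15 of \cite{D-V-1}, the two stochastic integrals cancel up to a quadratic-covariation term which, together with the two It\^o corrections, reassembles into $\tfrac12\sum_k|g_k(x,\xi)-g_k(y,\zeta)|^2$ and is bounded by $C(\gamma^2\delta^{-1}+\delta)$ via (\ref{equ-29}); the term $(a(\xi)-a(\zeta))\cdot\nabla_x\rho^{\lambda}_{\gamma}$ contributes $C\delta\gamma^{-1}$; the transport pieces carrying $\nabla_x\lambda$ and $-a_j(\xi)$ are kept unchanged as $\tilde L^{N,\lambda}_1,\tilde L^{N,\lambda}_2$; the bulk and boundary mollification discrepancies become $\mathcal{E}^{N,\lambda}_0(\gamma,\delta)$ and $r^{N,\lambda}_t(\gamma,\delta)$; the boundary flux, via the trace identities (\ref{eqq-38})--(\ref{eqq-39}), inequality (3.11) of \cite{KN16}, and the fact that $u^n\equiv 0$ on $\Sigma$, is controlled by the $|a(0)|$ term; and the $\xi$-tails are collected into $I_N$, which in the viscous setting additionally contains $\mu'_{q^n_1}(\pm N)$ and $\mu'_{q^n_2}(\pm N)$.

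The genuinely new input is the parabolic one. In the doubled identity the parabolic transport contributes $\tfrac{1}{n}\int f_1\bar f_2\,(\Delta_x+\Delta_y)(\alpha^{\lambda}w)$. Since $\rho^{\lambda}_{\gamma}(y-x)$ depends on $y-x$ alone, $\Delta_x\rho^{\lambda}_{\gamma}(y-x)=\Delta_y\rho^{\lambda}_{\gamma}(y-x)$, and since $\partial_{x_i}w\equiv-1$ and $\Delta_x w\equiv0$, the diagonal part of $(\Delta_x+\Delta_y)(\alpha^{\lambda}w)$ gives
\[
\frac{1}{n}\int f_1(s)\bar f_2(s)\,w\,\big(\Delta_x\lambda\big)\,\rho^{\lambda}_{\gamma}\psi_{\delta}\;-\;\frac{2}{n}\sum_{j=1}^d\int f_1(s)\bar f_2(s)\,\big(\partial_{x_j}\lambda\big)\,\rho^{\lambda}_{\gamma}\psi_{\delta},
\]
which, after the same passage from a double integral to a single one that was used to bound $H^{N,\lambda}_1,H^{N,\lambda}_2$ in the proof of Theorem \ref{thm-1}, becomes precisely the two $n^{-1}$ terms of (\ref{e-22}), with the $\bar f_1 f_2$ contributions coming from the conjugate product. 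The remaining, off-diagonal pieces $\tfrac{2}{n}\lambda w\,\Delta_x\rho^{\lambda}_{\gamma}\psi_{\delta}$ and $\tfrac{2}{n}\nabla_x(\lambda w)\cdot\nabla_x\rho^{\lambda}_{\gamma}\psi_{\delta}$ I would pair with the dissipation measures $q^n_1,q^n_2$ from the right-hand sides of (\ref{ee-7-1})--(\ref{eqq-17}): using the chain rule (\ref{P-24}) to rewrite $\Delta_x\rho^{\lambda}_{\gamma}(y-x)\psi_{\delta}(u^n_1-u^n_2)$ as a double divergence in $x$ and $y$ and integrating by parts, the combination $-q^n_1-q^n_2+\tfrac{2}{n}(\text{cross term})$ collapses, by completion of squares, to $-\tfrac{1}{n}\int|\nabla_x u^n_1-\nabla_y u^n_2|^2\,\lambda w\,\rho^{\lambda}_{\gamma}\psi_{\delta}\le0$, up to a remainder of order $\gamma,\delta$; dropping the non-positive square and gathering the remainder is what defines $\tilde r^{N,\lambda}_t(\gamma,\delta)$, which tends to $0$ as $\gamma,\delta\to0$ along $\delta=\gamma^{4/3}$. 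This bookkeeping, the parabolic analogue of the computation in \cite{DHV}, is the step I expect to be the main obstacle; everything else is a transcription of Proposition \ref{prp-1}.

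Finally I would assemble all of the above over the single region $D^{\lambda}_x\times D_y$, arriving at (\ref{e-22}) at time $t$; because $u^n\in C([0,T];H)$ there is no left/right-limit issue at the terminal time, so the fully mollified quantity may be kept on the left exactly as in the derivation of (\ref{qq-4}). Applying Lemma \ref{lem-4} to $m^n_i,q^n_i$ and $\nu^i$ then gives $\limsup_{N\to\infty}I_N=0$, and summing over the partition of unity $\{\lambda_i\}_{i=0}^{M}$ with $\sum_i\lambda_i\equiv1$ completes the argument.
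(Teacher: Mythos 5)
Your proposal follows essentially the same route as the paper: doubling of variables with the viscous kinetic formulation, the cancellations $\Delta_x\rho^{\lambda}_{\gamma}=\Delta_y\rho^{\lambda}_{\gamma}$, $\Delta_x w=0$, $\partial_{x_j}w=-1$, the chain rule (\ref{P-24}) together with the generalized It\^{o} formula of \cite{DHV} to rewrite the cross term, and the completion of squares with $q^n_1,q^n_2$ yielding $-\tfrac{1}{n}\int|\nabla_x u^n_1-\nabla_y u^n_2|^2\le 0$, with the residual $\Delta_x\lambda$ and $\partial_{x_j}\lambda$ contributions producing the two $n^{-1}$ terms and the mollification discrepancy $\tilde r^{N,\lambda}_t$. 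The only difference is organizational (you group the Laplacian of the test function into diagonal and off-diagonal parts, whereas the paper splits $J^{\sharp}_1$ into $I_1,\dots,I_4$ and then $K_1,K_2,K_3$), which does not change the argument.
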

\begin{proof}
Similar to Proposition \ref{prp-7}, the equations (\ref{ee-7-1}) and (\ref{eqq-17}) satisfied by $f$ and $\bar{f}$ can be strengthened to be only weak in $(x,\xi)$.
Compared with the proof of Proposition \ref{prp-1}, we only need to handle the additional terms $J^{\sharp}_1, J^{\sharp}_2$ generated by $n^{-1} \Delta_x u^{n}(t,x;\vartheta_1)$ and $n^{-1} \Delta_y u^{n}(t,y;\vartheta_2)$, respectively.

By integration by parts formula, we have
\begin{eqnarray*}
  J^{\sharp}_1&=& n^{-1} \mathbb{E}\int^t_0\int_{D^{\lambda}_x}\int_{D_y}\int^N_{-N}\int^N_{-N}\Psi_{\eta}(\xi,\zeta) f_1(s)\bar{f}_2(s) \Delta_x (\alpha^{\lambda}w(x))d\xi d\zeta dx dy ds\\
  && +n^{-1} \mathbb{E}\int^t_0\int_{D^{\lambda}_x}\int_{D_y}\int^N_{-N}\int^N_{-N}\Psi_{\eta}(\xi,\zeta) f_1(s)\bar{f}_2(s) \Delta_y \alpha^{\lambda}w(x)d\xi d\zeta dx dy ds\\
  && -\mathbb{E}\int^t_0\int_{D^{\lambda}_x}\int_{D_y}\int^N_{-N}\int^N_{-N}\Psi_{\eta}(\xi,\zeta)\alpha^{\lambda}w(x)d\nu^{1,-}_{s,x}(\xi)dxdq^n_2(s,y,\zeta)\\
 && -\mathbb{E}\int^t_0\int_{D^{\lambda}_x}\int_{D_y}\int^N_{-N}\int^N_{-N}\Psi_{\eta}(\xi,\zeta)\alpha^{\lambda}w(x)d\nu^{2,+}_{s,y}(\zeta)dydq^n_1(s,x,\xi)\\
  &=:&  \sum_{i=1}^4I_i,
\end{eqnarray*}
and
\begin{eqnarray*}
  J^{\sharp}_2&=& n^{-1} \mathbb{E}\int^t_0\int_{D^{\lambda}_x}\int_{D_y}\int^N_{-N}\int^N_{-N}\Psi_{\eta}(\xi,\zeta) \bar{f}_1(s)f_2(s) \Delta_x (\alpha^{\lambda}w(x))d\xi d\zeta dy dx ds\\
  && +n^{-1} \mathbb{E}\int^t_0\int_{D^{\lambda}_x}\int_{D_y}\int^N_{-N}\int^N_{-N}\Psi_{\eta}(\xi,\zeta) \bar{f}_1(s)f_2(s) \Delta_y \alpha^{\lambda}w(x)d\xi d\zeta dy dx ds\\
  && -\mathbb{E}\int^t_0\int_{D^{\lambda}_x}\int_{D_y}\int^N_{-N}\int^N_{-N}\Psi_{\eta}(\xi,\zeta)\alpha^{\lambda}w(x)d\nu^{1,-}_{s,x}(\xi)dxdq^n_2(s,y,\zeta)\\
 && -\mathbb{E}\int^t_0\int_{D^{\lambda}_x}\int_{D_y}\int^N_{-N}\int^N_{-N}\Psi_{\eta}(\xi,\zeta)\alpha^{\lambda}w(x)d\nu^{2,+}_{s,y}(\zeta)dydq^n_1(s,x,\xi).
\end{eqnarray*}
%
Clearly, by the definition of $q^n_1$ and $q^n_2$, we have
\begin{eqnarray}\notag
&&I_3+I_4\\ \notag
&=& -n^{-1} \mathbb{E}\int^t_0\int_{D^{\lambda}_x}\int_{D_y}\Psi_{\eta}(u^n(s,x;\vartheta_1),u^n(s,y;\vartheta_2))
\lambda(x)w(x)\rho^{\lambda}_{\gamma}(y-x)\psi_{\delta}(u^n(s,x;\vartheta_1)-u^n(s,y;\vartheta_2))\\ \notag
&&\quad \times|\nabla_x u^n(s,x;\vartheta_1)|^2dydxds\\
\notag
&& -n^{-1} \mathbb{E}\int^t_0\int_{D^{\lambda}_x}\int_{D_y}\Psi_{\eta}(u^n(s,x;\vartheta_1),u^n(s,y;\vartheta_2))
\lambda(x)w(x)\rho^{\lambda}_{\gamma}(y-x)\psi_{\delta}(u^n(s,x;\vartheta_1)-u^n(s,y;\vartheta_2))\\
\label{ee-20}
&& \quad \times|\nabla_y u^n(s,y;\vartheta_2)|^2dydxds.
\end{eqnarray}
As $\Delta_x \alpha=\Delta_y\alpha$, it follows that
\begin{eqnarray*}
I_1+I_2&=& 2 n^{-1} \mathbb{E}\int^t_0\int_{D^{\lambda}_x}\int_{D_y}\int^N_{-N}\int^N_{-N}\Psi_{\eta}(\xi,\zeta) f_1\bar{f}_2(\Delta_x \alpha)\lambda(x)w(x)d\xi d\zeta dy dx ds\\
&& +2n^{-1} \mathbb{E}\int^t_0\int_{D^{\lambda}_x}\int_{D_y}\int^N_{-N}\int^N_{-N}\Psi_{\eta}(\xi,\zeta) f_1\bar{f}_2 \nabla_x \alpha  \cdot \nabla_x (\lambda(x)w(x)) d\xi d\zeta dy dxds\\
&& +n^{-1} \mathbb{E}\int^t_0\int_{D^{\lambda}_x}\int_{D_y}\int^N_{-N}\int^N_{-N}\Psi_{\eta}(\xi,\zeta) f_1\bar{f}_2 \alpha \Delta_x (\lambda(x)w(x)) d\xi d\zeta dy dxds\\
&=:& K_1+K_2+K_3.
\end{eqnarray*}
By integration by parts formula, we get
\begin{eqnarray*}
K_1&=& -2n^{-1} \mathbb{E}\int^t_0\int_{D^{\lambda}_x}\int_{D_y}\int^N_{-N}\int^N_{-N}\Psi_{\eta}(\xi,\zeta) \nabla_x(f_1\lambda(x)w(x))\cdot(\nabla_y\bar{f}_2) \alpha d\xi d\zeta dy dxds\\
&=& -2n^{-1} \mathbb{E}\int^t_0\int_{D^{\lambda}_x}\int_{D_y}\int^N_{-N}\int^N_{-N}\Psi_{\eta}(\xi,\zeta) (\nabla_xf_1)\cdot(\nabla_y\bar{f}_2) \alpha\lambda(x)w(x) d\xi d\zeta dy dxds\\
&& -2n^{-1} \mathbb{E}\int^t_0\int_{D^{\lambda}_x}\int_{D_y}\int^N_{-N}\int^N_{-N}\Psi_{\eta}(\xi,\zeta) f_1\nabla_x(\lambda(x)w(x))\cdot(\nabla_y\bar{f}_2) \alpha d\xi d\zeta dy dxds\\
&=:& K_{1,1}+K_{1,2}.
\end{eqnarray*}
Hence
\begin{eqnarray}\label{ee-21}
  J^{\sharp}_1=K_{1,1}+K_{1,2}+K_2+K_3+I_3+I_4.
\end{eqnarray}
Based on (\ref{P-24}), using the same method as the proof of Theorem 3.3 in \cite{DHV} (the estimates of $J_2$), we get
\begin{eqnarray*}
  &&K_{1,1}\\
  &=&2n^{-1} \mathbb{E}\int^t_0\int_{D^{\lambda}_x}\int_{D_y}\Psi_{\eta}(u^n(s,x;\vartheta_1),u^n(s,y;\vartheta_2))
  \lambda(x)w(x)
  \rho^{\lambda}_{\gamma}(y-x)\psi_{\delta}(u^n(s,x;\vartheta_1)-u^n(s,y;\vartheta_2))\\
  && \times\nabla_x u^n(s,x;\vartheta_1)\cdot\nabla_y u^n(s,y;\vartheta_2)dy dxds.
\end{eqnarray*}
This together with (\ref{ee-20}) imply
\begin{eqnarray*}
&&K_{1,1}+I_3+I_4\\
&=& -n^{-1} \mathbb{E}\int^t_0\int_{D^{\lambda}_x}\int_{D_y}\Psi_{\eta}(u^n(s,x;\vartheta_1),u^n(s,y;\vartheta_2))
  \lambda(x)w(x)
  \rho^{\lambda}_{\gamma}(y-x)\psi_{\delta}(u^n(s,x;\vartheta_1)-u^n(s,y;\vartheta_2))\\
  && \times|\nabla_x u^n(s,x;\vartheta_1)-\nabla_y u^n(s,y;\vartheta_2)|^2dydxds\\
  &\leq& 0.
\end{eqnarray*}
Moreover,
\begin{eqnarray*}
  K_2&=&-2n^{-1} \mathbb{E}\int^t_0\int_{D^{\lambda}_x}\int_{D_y}\int^N_{-N}\int^N_{-N}\Psi_{\eta}(\xi,\zeta) f_1\bar{f}_2 \nabla_y \alpha  \cdot \nabla_x (\lambda(x)w(x)) d\xi d\zeta dy dxds\\
  &=&2n^{-1} \mathbb{E}\int^t_0\int_{D^{\lambda}_x}\int_{D_y}\int^N_{-N}\int^N_{-N}\Psi_{\eta}(\xi,\zeta)
  f_1\nabla_x(\lambda(x)w(x))\cdot(\nabla_y\bar{f}_2) \alpha d\xi d\zeta dy dxds\\
  &=&-K_{1,2}.
\end{eqnarray*}
By the definition of $w$, we have
\begin{eqnarray*}
K_3&=&-2  n^{-1} \sum^{d}_{j=1} \mathbb{E}\int^t_0\int_{D^{\lambda}_x}\int_{D_y}\int^N_{-N}\int^N_{-N}\Psi_{\eta}(\xi,\zeta) f_1\bar{f}_2 \rho^{\lambda}_{\gamma}(y-x)\psi_{\delta}(\xi-\zeta) \partial_{x_j} \lambda(x) d\xi d\zeta dydxds\\
&& +n^{-1} \mathbb{E}\int^t_0\int_{D^{\lambda}_x}\int_{D_y}\int^N_{-N}\int^N_{-N}\Psi_{\eta}(\xi,\zeta) f_1\bar{f}_2 \rho^{\lambda}_{\gamma}(y-x)\psi_{\delta}(\xi-\zeta) w(x)\Delta_x \lambda(x) d\xi d\zeta dydxds.
\end{eqnarray*}
Based on the above, in view of (\ref{ee-21}), we conclude that $ J^{\sharp}_1\leq K_3$.
A similar estimates also holds for $J^{\sharp}_2$.

Define
\begin{eqnarray}\notag
  \tilde{r}^{N,\lambda}_t(\gamma,\delta)&:=&-2  n^{-1} \sum^{d}_{j=1} \mathbb{E}\int^t_0\int_{D^{\lambda}_x}\int_{D_y}\int^N_{-N}\int^N_{-N} (f_1\bar{f}_2+\bar{f}_1f_2) \rho^{\lambda}_{\gamma}(y-x)\psi_{\delta}(\xi-\zeta) \partial_{x_j} \lambda(x) d\xi d\zeta dy dxds\\ \notag
  && +2n^{-1} \sum^{d}_{j=1} \mathbb{E}\int^t_0\int_{D^{\lambda}}\int^N_{-N} (f_1\bar{f}_2+\bar{f}_1f_2)  \partial_{x_j} \lambda(x) d\xi dxds\\ \notag
&& +n^{-1} \mathbb{E}\int^t_0\int_{D^{\lambda}_x}\int_{D_y}\int^N_{-N}\int^N_{-N} (f_1\bar{f}_2+\bar{f}_1f_2) \rho^{\lambda}_{\gamma}(y-x)\psi_{\delta}(\xi-\zeta) w(x)\Delta_x \lambda(x) d\xi d\zeta dy dxds\\
\label{s-3}
&& -n^{-1} \mathbb{E}\int^t_0\int_{D^{\lambda}}\int^N_{-N} (f_1\bar{f}_2+\bar{f}_1f_2) w(x)\Delta_x \lambda(x) d\xi dxds.
\end{eqnarray}
Arguing similarly as in the proof of (\ref{qq-4}) and (\ref{eqq-8}), it follows that
\begin{eqnarray}\label{s-4}
\lim_{\gamma, \delta\rightarrow 0}\sum^M_{i=0}|\tilde{r}^{N,\lambda_i}_t(\gamma,\delta)|=0, \quad \lim_{\gamma, \delta\rightarrow 0}\sum^M_{i=0}\int^T_0|\tilde{r}^{N,\lambda_i}_t(\gamma,\delta)|dt=0.
\end{eqnarray}
Finally, proceeding as Theorem \ref{thm-1},  we get the desired result by taking $\tilde{L}^{N,\lambda}_1:=L_3$ and $\tilde{L}^{N,\lambda}_2:=L_4$.

\end{proof}

The following result states that  $u^n$ converges to $u$ in the
space $L^1(\Omega\times [0,T];L^1_{w;x})$. 
\begin{prp}\label{prp-12}
   \begin{eqnarray}\label{rr-1}
     \lim_{n\rightarrow \infty}  \mathbb{E}\int^T_0\|u(t;\vartheta)-u^{n}(t;\vartheta)\|_{L^1_{w;x}}dt=0.
   \end{eqnarray}
\end{prp}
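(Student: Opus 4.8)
The plan is to carry out the doubling of variables directly between the viscous approximation $u^{n}(\cdot;\vartheta)$ and the kinetic solution $u(\cdot;\vartheta)$, following the scheme of Proposition \ref{prp-1} and Lemma \ref{lem-3} but with the regularizing terms $\tfrac1n\Delta u^{n}$ and $q^{n}$ present on one side only. Concretely, I would take $f_{1}(t)=I_{u^{n}(t,x;\vartheta)>\xi}$, which satisfies (\ref{ee-7-1})--(\ref{eqq-17}) strengthened (as in the proof of Lemma \ref{lem-3}) to be weak only in $(x,\xi)$, and $f_{2}(t)=I_{u(t,y;\vartheta)>\zeta}$, which satisfies the strengthened form of (\ref{P-21})--(\ref{P-21-1}) from Proposition \ref{prp-7}. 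Localising by the partition of unity $\{\lambda_{i}\}$ and testing the product against $\rho^{\lambda}_{\gamma}(y-x)\psi_{\delta}(\xi-\zeta)\lambda(x)w(x)$, one runs exactly the bookkeeping of Section \ref{well-posedness}. A simplification occurs because $u^{n}$ and $u$ carry the same initial datum and the same homogeneous boundary datum: the initial contribution equals $\mathbb{E}\|\vartheta-\vartheta\|_{L^{1}_{w;x}}=0$ since $f_{1,0}\bar f_{2,0}=I_{\vartheta>\xi}I_{\vartheta\le\xi}=0$, and the boundary contribution vanishes for the same reason, just as in the proof of Theorem \ref{thm-1}. Hence, after the manipulations of Theorem \ref{thm-1}, one reaches an inequality of the shape
\[
\mathbb{E}\|u^{n}(t;\vartheta)-u(t;\vartheta)\|_{L^{1}_{w;x}}\le R_{1}(n,\gamma,\delta,N)+R_{2}(\gamma,\delta,N)+I_{N},
\]
where $R_{2}(\gamma,\delta,N)\le C(\delta\gamma^{-1}+\gamma^{2}\delta^{-1}+\delta)$ collects the same $(\gamma,\delta)$-errors as in Theorem \ref{thm-1}, $I_{N}$ is the tail term of Lemma \ref{lem-3}, and $R_{1}$ gathers all the terms produced by the extra viscous structure of $u^{n}$.

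The core of the proof is to show $R_{1}(n,\gamma,\delta,N)\to 0$ as $n\to\infty$ for fixed $(\gamma,\delta,N)$. The defect measure $q^{n}$ enters only through a term that is the exact analogue of $I_{4}$ in the proof of Lemma \ref{lem-3}, namely $-\,\mathbb{E}\int\Psi_{\eta}\,\rho^{\lambda}_{\gamma}(y-x)\psi_{\delta}(\xi-\zeta)\lambda(x)w(x)\,d\nu^{2,+}_{s,y}(\zeta)\,dy\,dq^{n}(s,x,\xi)$, which is manifestly $\le 0$ because $q^{n},\nu^{2}$ are nonnegative and $\rho^{\lambda}_{\gamma},\psi_{\delta},\lambda,w,\Psi_{\eta}\ge 0$; this term is simply discarded. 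The term produced by $\tfrac1n\Delta_{x}u^{n}$ is $\tfrac1n\langle f_{1},\Delta_{x}(\rho^{\lambda}_{\gamma}(y-x)\lambda(x)w(x))\rangle$ tested against $\bar f_{2}\psi_{\delta}$; unlike in Lemma \ref{lem-3} there is no symmetric $\tfrac1n\Delta_{y}$ counterpart to complete a square, since $u$ is not differentiable in space. Instead I would integrate by parts in $x$ once, use the chain rule $\partial_{x_{i}}f_{1}=\partial_{x_{i}}u^{n}\,\delta_{u^{n}=\xi}$ from (\ref{P-24}) (precisely as in the estimate of $J_{2}$ in Theorem 3.3 of \cite{DHV}), and bound crudely: since $\int_{\mathbb{R}}\psi_{\delta}(u^{n}(x)-\zeta)\,d\zeta=1$ and $0\le\bar f_{2}\le 1$, the genuinely dangerous cross term (the one carrying $\nabla_{x}\rho^{\lambda}_{\gamma}$) is controlled by $\tfrac{C}{n}\|\nabla\rho^{\lambda}_{\gamma}\|_{L^{1}}\,\mathbb{E}\int_{0}^{t}\|\nabla u^{n}\|_{L^{1}_{x}}\,ds\le C\gamma^{-1}n^{-1}\big(\mathbb{E}\int_{0}^{T}\|\nabla u^{n}\|^{2}_{L^{2}_{x}}\,ds\big)^{1/2}\le C\gamma^{-1}n^{-1/2}$ by (\ref{rr-5-1}), while the remaining pieces (those carrying $\nabla\lambda$ or $\Delta\lambda$ but no $\nabla\rho^{\lambda}_{\gamma}$) are $O(n^{-1})$ for fixed $(\gamma,\delta,N)$. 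Thus $R_{1}(n,\gamma,\delta,N)\to 0$ as $n\to\infty$.

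Finally I would take the limits in the order forced by the previous step: $n\to\infty$ first (killing $R_{1}$ and, using the uniform-in-$n$ bound (\ref{rr-5-1}) together with Lemma \ref{lem-4}, leaving a tail term with $\limsup_{n\to\infty}I_{N}\to 0$ as $N\to\infty$), then $N\to\infty$, and then $\delta=\gamma^{4/3}$ with $\gamma\to 0$ exactly as in Theorem \ref{thm-1}. This yields $\limsup_{n\to\infty}\mathbb{E}\|u^{n}(t;\vartheta)-u(t;\vartheta)\|_{L^{1}_{w;x}}=0$ for a.e. $t\in[0,T]$. Since $\mathbb{E}\|u^{n}(t;\vartheta)\|_{L^{1}_{w;x}}+\mathbb{E}\|u(t;\vartheta)\|_{L^{1}_{w;x}}$ is bounded uniformly in $n$ and $t$ by (\ref{rr-5-1}) and (\ref{eqq-31}) (using the equivalence of $\|\cdot\|_{L^{1}_{w;x}}$ and $\|\cdot\|_{L^{1}_{x}}$), the dominated convergence theorem in the time variable then gives (\ref{rr-1}).

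The step I expect to be the main obstacle is exactly the treatment of the $\tfrac1n\Delta u^{n}$ contribution: in Lemma \ref{lem-3} it was neutralised by the sign of $-\tfrac1n\mathbb{E}\int|\nabla_{x}u^{n}_{1}-\nabla_{y}u^{n}_{2}|^{2}$, a device unavailable here because the comparison object $u$ is only an $L^{q}$-function, so one must absorb the surviving cross term into the a priori bound $\tfrac1n\mathbb{E}\int_{0}^{T}\|\nabla u^{n}\|^{2}_{L^{2}_{x}}\,ds\le C$ at the cost of a factor $\gamma^{-1}$, which dictates the order ``$n\to\infty$ before $\gamma\to 0$''. One must therefore also check carefully that none of the $(\gamma,\delta,N)$-errors kept after the $n$-limit secretly depends on $n$ — in particular that $I_{N}$ admits a bound uniform in $n$, which follows from the uniform estimate (\ref{rr-5-1}) and the corresponding uniform control of the kinetic measures $m^{n}+q^{n}$.
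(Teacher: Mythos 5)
Your proposal is correct and follows the same doubling-of-variables architecture as the paper's proof of Proposition \ref{prp-12}: double variables between the kinetic solution and the viscous approximation, localise with the partition of unity and the weight $w$, discard the sign-definite contribution of the parabolic defect measure $q^{n}$ (keeping only its $\Psi_{\eta}$-cutoff remainder inside $I_N$), and note that the initial and boundary contributions vanish because the two solutions share the same data. The one place where you genuinely diverge is the treatment of the $\tfrac1n\Delta u^{n}$ term. You integrate by parts once, use the chain rule (\ref{P-24}) to write $\nabla_x f=\nabla_x u^{n}\,\delta_{u^{n}=\xi}$, and absorb the surviving cross term into the a priori bound $\tfrac1n\mathbb{E}\int_0^T\|\nabla u^{n}\|^2_{L^2_x}ds\leq C$ from (\ref{rr-5-1}), paying $C\gamma^{-1}n^{-1/2}$; the paper instead integrates by parts twice, keeping $\Delta_y\rho^{\lambda}_{\gamma}$ intact, introduces the primitive $\Upsilon(\xi,\zeta)$ of $\Psi_{\eta}\psi_{\delta}$ and pairs it with the Young measures $\nu^1\otimes\nu^2$, obtaining $Cn^{-1}\gamma^{-2}+Cn^{-1}\gamma^{-2}\delta$ from the moment bounds alone, with no use of the gradient estimate. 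Both bounds force $n\to\infty$ to dominate $\gamma\to 0$: you take $n\to\infty$ first at fixed $(\gamma,\delta,N)$, whereas the paper first fixes $N_0$ uniformly in $n$ via an $\iota$-argument and then couples $\gamma=n^{-1/3}$, $\delta=\gamma^{4/3}$, sending everything to its limit simultaneously; the paper also integrates the doubling inequality in $t$ over $[0,T]$ from the outset rather than proving pointwise-in-$t$ convergence and invoking dominated convergence, but both orderings are legitimate. The caveat you raise yourself is the real one in either ordering: the errors $\mathcal{E}^{N,\lambda}_t(\gamma,\delta)$, $r^{N,\lambda}_t(\gamma,\delta)$ and the tail $I_N$ depend on $n$ through $u^{n}$, $m^{n}$, $q^{n}$, so the bounds retained after the $n$-limit must be uniform in $n$ (this follows from (\ref{rr-5-1}) and the uniform control of the kinetic measures, or by arranging the translation in the $H^{N,\lambda}_2$-type error to fall on the fixed solution $u$ rather than on $u^{n}$); the paper is equally brief on this point, so your argument is at the same level of rigour.
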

\begin{proof}
 Let $f_1(t):=f_1(t,x;\xi)=I_{u(t,x;\vartheta)>\xi}$  and $f_2(t):=f_2(t,y;\zeta)=I_{u^{n}(t,y;\vartheta)>\zeta}$ with the corresponding data $f_{1,0}=I_{\vartheta>\xi}$, $f_{2,0}=I_{\vartheta>\zeta}$, $f_{1,b}=I_{0>\xi}$, $f_{2,b}=I_{0>\zeta}$.
  The corresponding kinetic measures are denoted by $m$ and $(m^{n},q^n)$.

Using the same method as in the proof of Proposition \ref{prp-1}, we only need to deal with
the additional terms generated by the term $n^{-1} \Delta_y u^{n}(t,y;\vartheta)$:
\begin{eqnarray*}
  J^*&=&n^{-1} \mathbb{E}\int^t_0\int_{D^{\lambda}_x}\int_{D_y}\int^N_{-N}\int^N_{-N}\Psi_{\eta}(\xi,\zeta)(f_1\bar{f}_2+\bar{f}_1f_2)\Delta_y \alpha^{\lambda} w(x)d\xi d\zeta dydxds\\
  && +\mathbb{E}\int^t_0\int_{D^{\lambda}_x}\int_{D_y}\int^N_{-N}\int^N_{-N}\Psi_{\eta}(\xi,\zeta)f^-_1w(x)\partial_{\zeta}\alpha^{\lambda} d\xi dx dq^{n}(s,y,\zeta)\\
  && -\mathbb{E}\int^t_0\int_{D^{\lambda}_x}\int_{D_y}\int^N_{-N}\int^N_{-N}\Psi_{\eta}(\xi,\zeta)\bar{f}^+_1w(x)\partial_{\zeta}\alpha^{\lambda}  d\xi dx dq^{n}(s,y,\zeta)\\
  &=:&J^*_1+ J^*_2+J^*_3,
\end{eqnarray*}
where $q^n=\frac{1}{n}|\nabla_y u^{n}|^2\delta_{u^{n}=\zeta}$.

 Similar to the estimates of $I_4$ and $I_9$ in Proposition \ref{prp-1}, we can show that $J^*_2+J^*_3\leq C\mu'_{q^n}(\pm N)$ with $\underset{N\rightarrow \infty}{{\rm{\limsup}}}\ \mu'_{q^n}(\pm N)=0$.
Define
\[
\Upsilon(\xi,\zeta)=\int^{\infty}_{\zeta}\int^{\xi}_{-\infty}\Psi_{\eta}(\xi',\zeta')
\psi_{\delta}(\xi'-\zeta')d\xi'd\zeta'.
\]
We have $\Upsilon(\xi,\zeta)\leq C(|\xi|+|\zeta|+\delta)$.

Then, by the boundedness of $w$, it follows that
\begin{eqnarray*}
&&\frac{1}{n}\mathbb{E}\int^t_0\int_{D^{\lambda}_x}\int_{D_y}\int^N_{-N}\int^N_{-N}\Psi_{\eta}(\xi,\zeta)f_1\bar{f}_2\Delta_y \alpha^{\lambda} w(x)d\xi d\zeta dydxds\\
&=&-\frac{1}{n}\mathbb{E}\int^t_0\int_{D^{\lambda}_x}\int_{D_y}\int^N_{-N}\int^N_{-N}f_1\bar{f}_2\Delta_y \rho^{\lambda}_{\gamma}(y-x) \partial_{\zeta}\partial_{\xi}\Upsilon(\xi,\zeta)\lambda(x)w(x)d\xi d\zeta dydxds\\
 &=& \frac{1}{n}\mathbb{E}\int^t_0\int_{D^{\lambda}_x}\int_{D_y}\Delta_y \rho^{\lambda}_{\gamma}(y-x)\lambda(x)w(x)\int^N_{-N}\int^N_{-N} \Upsilon(\xi,\zeta)d\nu^1_{s,x}\otimes \nu^2_{s,y}(\xi,\zeta)dydxds\\
 &\leq & C\frac{1}{n}\mathbb{E}\int^t_0\int_{D^{\lambda}_x}\int_{D_y} |\Delta_y \rho^{\lambda}_{\gamma}(y-x)|\lambda(x)w(x)\int_{\mathbb{R}^2}(|\xi|+|\zeta|+\delta)d\nu^1_{s,x}\otimes \nu^2_{s,y}(\xi,\zeta)dydxds\\
 &\leq & Cn^{-1}\gamma^{-2}+Cn^{-1}\gamma^{-2}\delta\int^t_0\int_{D^{\lambda}_x}\lambda(x)dxds.
\end{eqnarray*}
Moreover, we have the same upper bound for $\frac{1}{n}\mathbb{E}\int^t_0\int_{D^{\lambda}_x}\int_{D_y}\int^N_{-N}\int^N_{-N}\Psi_{\eta}(\xi,\zeta)\bar{f}_1f_2\Delta_y \alpha^{\lambda} w(x)d\xi d\zeta dydxds$, which implies
\[
J^*\leq Cn^{-1}\gamma^{-2}+Cn^{-1}\gamma^{-2}\delta\int^t_0\int_{D^{\lambda}_x}\lambda(x)dxds+C\mu'_{q^n}(\pm N).
\]
Due to Proposition 4.1, we obtain that for any $0\leq t\leq T$,
\begin{eqnarray}\notag
 &&\mathbb{E} \int_{D^{\lambda}_x}\int_{D_y}\int^N_{-N}\int^N_{-N}(f^{\pm}_1(t)\bar{f}^{\pm}_2(t)+\bar{f}^{\pm}_1(t)f^{\pm}_2(t))\rho^{\lambda}_{\gamma}(y-x)\psi_{\delta}(\xi-\zeta)\lambda(x)w(x)d\xi d\zeta  dydx\\ \notag
 &\leq& \mathbb{E} \int_{D^{\lambda}_x}\int_{D_y}\int^N_{-N}\int^N_{-N}(f_{1,0}\bar{f}_{2,0}+\bar{f}_{1,0}f_{2,0})\rho^{\lambda}_{\gamma}(y-x)\psi_{\delta}(\xi-\zeta)\lambda(x)w(x)d\xi d\zeta  dydx\\
 \label{eqq-6}
&& +\sum_{i=1}^5\tilde{J}_i+I_N+Cn^{-1}\gamma^{-2}+Cn^{-1}\gamma^{-2}\delta\int^t_0\int_{D^{\lambda}_x}\lambda(x)dxds,
  \end{eqnarray}
  where $\tilde{J}_i$ are the corresponding terms to $J_i$, $i=1,\dots, 5$ of Proposition 4.1 with $f_1=I_{u(t;\vartheta)>\xi}$ and $f_2=I_{u^{n}(t;\vartheta)>\zeta}$ and
  \[
  I_N=C(\mu'_{m}(\pm N)+\mu'_{m^n}(\pm N)+\mu'_{q^n}(\pm N)+(1+N^2)\mu'_{\nu^1}(\pm N)+(1+N^2)\mu'_{\nu^2}(\pm N))
  \]
satisfying $\underset{N\rightarrow \infty}{{\rm{\limsup}}}\ I_N=0$.
Notice that $\tilde{J}_4\leq 0$, then by the same method as for the proof of Theorem \ref{thm-1} and integrating $t$ from $0$ to $T$, we get
%
%
 \begin{eqnarray*}\notag
 &&\mathbb{E}\int^T_0\int_{ D}\int^N_{-N}(f^{\pm}_1(t)\bar{f}^{\pm}_2(t)+\bar{f}^{\pm}_1(t)f^{\pm}_2(t))w(x)d\xi dxdt\\ \notag
 &\leq& T\mathbb{E} \int_{D}\int^N_{-N}(f_{1,0}\bar{f}_{2,0}+\bar{f}_{1,0}f_{2,0})w(x)d\xi dx\\
 && +|a(0)|\mathbb{E}\int^T_0\int^t_0\int_{\partial D}\int_{\mathbb{R}}(f_{1,b}\bar{f}_{2,b}+\bar{f}_{1,b}f_{2,b})w(x)d\xi d\sigma(x)dsdt+CMT\delta\gamma^{-1}\\
 \notag
&&+CTM(\gamma^2\delta^{-1}+\delta)+TMI_N+CMTn^{-1}\gamma^{-2}+CT^2n^{-1}\gamma^{-2}\delta+\sum^M_{i=0}\mathcal{E}^{N,\lambda_i}_0(\gamma,\delta)T
\\
&& +\sum^M_{i=0}\Big(\int^T_0r^{N,\lambda_i}_t(\gamma,\delta)dt
+\int^T_0\mathcal{E}^{N,\lambda_i}_t(\gamma,\delta)dt\Big),
\end{eqnarray*}
where error terms $\mathcal{E}^{N,\lambda}_t(\gamma,\delta)$ and $r^{N,\lambda}_t(\gamma,\delta)$ are defined by (\ref{qq-3}) and (\ref{eqq-1}), respectively.

Noting that $\limsup_{N\rightarrow \infty}I_N=0$, and by the dominated convergence theorem, we have
\begin{eqnarray*}
\lim_{N\rightarrow \infty}\mathbb{E} \int_{D}\int^N_{-N}(f_{1,0}\bar{f}_{2,0}+\bar{f}_{1,0}f_{2,0})w(x)d\xi dx=\mathbb{E} \int_{D}\int_{\mathbb{R}}(f_{1,0}\bar{f}_{2,0}+\bar{f}_{1,0}f_{2,0})w(x)d\xi dx.
\end{eqnarray*}
Moreover, employing a similar method as in the proof of (\ref{a-6}) with $f_1=I_{u(t;\vartheta)>\xi}$ and $f_2=I_{u^{n}(t;\vartheta)>\zeta}$, by utilizing (\ref{rr-5-1}), we have
\begin{eqnarray*}
&&\lim_{N\rightarrow \infty}\mathbb{E}\int^T_0\int_{ D}\int^N_{-N}(f^{\pm}_1(t)\bar{f}^{\pm}_2(t)+\bar{f}^{\pm}_1(t)f^{\pm}_2(t))w(x)d\xi dxdt\\
&=&\mathbb{E}\int^T_0\int_{ D}\int_{\mathbb{R}}(f^{\pm}_1(t)\bar{f}^{\pm}_2(t)+\bar{f}^{\pm}_1(t)f^{\pm}_2(t))w(x)d\xi dxdt, \quad \text{uniformly\ on}\ n.
\end{eqnarray*}
Hence, for any $\iota>0$, there exists a big enough constant $N_0$ independent of $\gamma,\delta,n$ such that
\begin{eqnarray*}\notag
 &&\mathbb{E}\int^T_0\int_{ D}\int_{\mathbb{R}}(f^{\pm}_1(t)\bar{f}^{\pm}_2(t)+\bar{f}^{\pm}_1(t)f^{\pm}_2(t))w(x)d\xi dxdt\\ \notag
 &\leq& \mathbb{E}\int^T_0\int_{ D}\int^{N_0}_{-N_0}(f^{\pm}_1(t)\bar{f}^{\pm}_2(t)+\bar{f}^{\pm}_1(t)f^{\pm}_2(t))w(x)d\xi dxdt+\iota\\ \notag
 &\leq& T\mathbb{E} \int_{D}\int_{\mathbb{R}}(f_{1,0}\bar{f}_{2,0}+\bar{f}_{1,0}f_{2,0})w(x)d\xi dx +(T+1)\iota\\
  && +|a(0)|\mathbb{E}\int^T_0\int^t_0\int_{\partial D}\int_{\mathbb{R}}(f_{1,b}\bar{f}_{2,b}+\bar{f}_{1,b}f_{2,b})w(x)d\xi d\sigma(x)dsdt\\
&& +CTM(\gamma^2\delta^{-1}+\delta)+CMTn^{-1}\gamma^{-2}+CT^2n^{-1}\gamma^{-2}\delta+\sum^M_{i=0}\mathcal{E}^{N_0,\lambda_i}_0(\gamma,\delta)T
\\
&& +\sum^M_{i=0}\Big(\int^T_0r^{N_0,\lambda_i}_t(\gamma,\delta)dt
+\int^T_0\mathcal{E}^{N_0,\lambda_i}_t(\gamma,\delta)dt\Big).
\end{eqnarray*}
Taking $\delta=\gamma^{\frac{4}{3}}$, $\gamma=n^{-\frac{1}{3}}$ and letting $n\rightarrow \infty$ (in this case, $\gamma,\delta\rightarrow 0$), we deduce from  (\ref{qq-5})-(\ref{eqq-8}) and (\ref{eqq-9})
 that
\begin{eqnarray*}
&&\lim_{n\rightarrow \infty}\mathbb{E}\int^T_0\int_{ D}\int_{\mathbb{R}}(f^{\pm}_1(t)\bar{f}^{\pm}_2(t)+\bar{f}^{\pm}_1(t)f^{\pm}_2(t))w(x)d\xi dxdt\\ \notag
 &\leq& T\mathbb{E} \int_{D}\int_{\mathbb{R}}(f_{1,0}\bar{f}_{2,0}+\bar{f}_{1,0}f_{2,0})w(x)d\xi dx +(T+1)\iota\\
  && +|a(0)|\mathbb{E}\int^T_0\int^t_0\int_{\partial D}\int_{\mathbb{R}}(f_{1,b}\bar{f}_{2,b}+\bar{f}_{1,b}f_{2,b})w(x)d\xi d\sigma(x)dsdt.
\end{eqnarray*}
Since $\iota$ is arbitrary, it follows that
\begin{eqnarray*}
&&\lim_{n\rightarrow \infty}\mathbb{E}\int^T_0\int_{ D}\int_{\mathbb{R}}(f^{\pm}_1(t)\bar{f}^{\pm}_2(t)+\bar{f}^{\pm}_1(t)f^{\pm}_2(t))w(x)d\xi dxdt\\ \notag
 &\leq& T\mathbb{E} \int_{D}\int_{\mathbb{R}}(f_{1,0}\bar{f}_{2,0}+\bar{f}_{1,0}f_{2,0})w(x)d\xi dx\\
  && +|a(0)|\mathbb{E}\int^T_0\int^t_0\int_{\partial D}\int_{\mathbb{R}}(f_{1,b}\bar{f}_{2,b}+\bar{f}_{1,b}f_{2,b})w(x)d\xi d\sigma(x)dsdt.
\end{eqnarray*}
Note that $f_1=I_{u(t;\vartheta)>\xi}$ and $f_2=I_{u^{n}(t;\vartheta)>\zeta}$ with the corresponding data $f_{1,0}=I_{\vartheta>\xi}, f_{2,0}=I_{\vartheta>\zeta}$ and $f^b_1=I_{0>\xi}, f^b_2=I_{0>\zeta}$. Applying (\ref{equ-1}), we get
\begin{eqnarray*}
\lim_{n\rightarrow \infty}\mathbb{E}\int^T_0\int_{ D}|u(t;\vartheta)-u^n(t;\vartheta)|w(x)dxdt= 0.
\end{eqnarray*}

\end{proof}

Now, we are in a position to give the proof of Theorem \ref{thm-10}.
\begin{proof}[\textbf{Proof of Theorem \ref{thm-10}}]
 From Proposition \ref{prp-12}, we know that there exists a subset $\mathcal{I}\subset [0,T]$ with $|\mathcal{I}|=T$  and a (non-relabelled) subsequence such that
\begin{eqnarray}\label{rr-8}
   \lim_{n\rightarrow \infty}\mathbb{E}\|u(t;\vartheta)-u^{n}(t;\vartheta)\|_{L^1_{w;x}}=0, \quad {\rm{for \ every\ }}t\in \mathcal{I}.
\end{eqnarray}
In the following, we will prove that for any $\tau>0$, there exists $\varsigma>0$ such that
\begin{eqnarray}\label{rr-9}
   \mathbb{E}\|u^{n}(t;\vartheta)-u^{n}(t';\vartheta)\|_{L^1_{w;x}}<\tau,
\end{eqnarray}
for every $n\geq 1$ and $t>t'\in \mathcal{I}$ with $|t-t'|<\varsigma$. For simplicity, we write $u^{n}(t,x;\vartheta)=u^{n}(t,x)$ and $u^{n}(t',x;\vartheta)=u^{n}(t',x)$.

Noting that $\rho^{\lambda}_{\gamma}(y-x)=0$ on $D^{\lambda}_x\times D^c$, we have
\begin{eqnarray*}\notag
   &&\mathbb{E}\int_{ D}|u^{n}(t,x)-u^{n}(t',x)|w(x)dx\\ \notag
   &=& \sum^M_{i=0} \mathbb{E}\int_{ D^{\lambda_i}_x}|u^{n}(t,x)-u^{n}(t',x)|w(x)\lambda_i(x)dx\\ \notag
   &=&  \sum^M_{i=0} \mathbb{E}\int_{ D^{\lambda_i}_x}\int_{D_y}|u^{n}(t,x)-u^{n}(t',x)|w(x)\lambda_i(x)\rho^{\lambda_i}_{\gamma}(y-x)dydx\\ \notag
   &\leq& \sum^M_{i=0} \mathbb{E}\int_{ D^{\lambda_i}_x}\int_{D_y}|u^{n}(t,x)-u^{n}(t',y)|\lambda_i(x)w(x)\rho^{\lambda_i}_{\gamma}(y-x)dy dx\\ \notag
   && +\sum^M_{i=0} \mathbb{E}\int_{ D^{\lambda_i}_x}\int_{D_y}|u^{n}(t',x)-u^{n}(t',y)|\lambda_i(x)w(x)\rho^{\lambda_i}_{\gamma}(y-x)dy dx\\
    &=:& K^n_1(\gamma)+K^n_2(\gamma).
\end{eqnarray*}
Letting $f_1(t'):=f_1(t',x,\xi)=I_{u^{n}(t',x)>\xi}, f_2(t'):=f_2(t',y,\zeta)=I_{u^{n}(t',y)>\zeta}$ with $f_{1,0}=I_{\vartheta>\xi}, f_{2,0}=I_{\vartheta>\zeta}$, it follows that
\begin{eqnarray*}\notag
  K^n_2(\gamma) =\sum^M_{i=0} \mathbb{E}\int_{ D^{\lambda_i}_x}\int_{D_y}\int_{\mathbb{R}}(f_1(t',x,\xi)\bar{f}_2(t',y,\xi)+\bar{f}_1(t',x,\xi)f_2(t',y,\xi))
  \lambda_i(x)w(x)\rho^{\lambda_i}_{\gamma}(y-x)d\xi dydx.
  \end{eqnarray*}
  Define
  \begin{eqnarray*}\notag
 \tilde{K}^{n,N}_2(\gamma):=\sum^M_{i=0} \mathbb{E}\int_{ D^{\lambda_i}_x}\int_{D_y}\int^N_{-N}(f_1(t',x,\xi)\bar{f}_2(t',y,\xi)+\bar{f}_1(t',x,\xi)f_2(t',y,\xi))
  \lambda_i(x)w(x)\rho^{\lambda_i}_{\gamma}(y-x)d\xi dydx,
   \end{eqnarray*}
 then, employing a similar method as in the proof of (\ref{a-6}) with $f_1(t'), f_2(t')$, and by utilizing (\ref{rr-5-1}), it follows that
  \begin{eqnarray}\label{rrr-7}
  K^n_2(\gamma) = \lim_{N\rightarrow \infty}\tilde{K}^{n,N}_2(\gamma),\quad \text{uniformly \ on} \ n.
   \end{eqnarray}
Applying (\ref{rrr-2}), we get for any $N$,
   \begin{eqnarray*}
\tilde{K}^{n,N}_2(\gamma)
  &\leq&
 C\delta+2M\Upsilon^{N}(\delta)\\
 && +\sum^M_{i=0} \mathbb{E}\int_{ D^{\lambda_i}_x}\int_{D_y}\int^N_{-N}\int^N_{-N}(f_1(t')\bar{f}_2(t')+\bar{f}_1(t')f_2(t'))\lambda_i(x)w(x)
 \rho^{\lambda_i}_{\gamma}(y-x)\psi_{\delta}(\xi-\zeta)d\xi d\zeta dydx\\
 &=:& C\delta+2M\Upsilon^{N}(\delta)+J^{n,N}(\gamma,\delta),
 \end{eqnarray*}
 where $\lim_{\delta\rightarrow 0}\Upsilon^{N}(\delta)=0$.

Applying Lemma \ref{lem-3} with $\vartheta_1=\vartheta_2=\vartheta$, and noticing $\tilde{L}^{N}_2\leq 0$, we get
\begin{eqnarray}\notag
J^{n,N}(\gamma, \delta)
&\leq& \sum^M_{i=0} \Big[\mathbb{E}\int_{ D^{\lambda_i}_x}\int^{N}_{-N}(f_{1,0}\bar{f}_{2,0}+\bar{f}_{1,0}f_{2,0})\lambda_i(x)w(x)d\xi dx+\mathcal{E}^{N,\lambda_i}_0(\gamma,\delta)\\ \notag
&&+|a(0)| \int^{t'}_0\int_{\partial D^{\lambda_i}_x}\int_{\mathbb{R}}(f_{1,b}\bar{f}_{2,b}+\bar{f}_{1,b}f_{2,b})w(x)d\xi d\sigma(x)ds\\ \notag
&& -2 n^{-1}\sum^d_{j=1} \mathbb{E}\int^{t'}_0\int_{D^{\lambda_i}_x}\int^{N}_{-N} (f_1(s)\bar{f}_2(s)+\bar{f}_1(s)f_2(s)) \partial_{x_j} \lambda_i(x) d\xi dxds\\ \notag
&& +n^{-1} \mathbb{E}\int^{t'}_0\int_{D^{\lambda_i}_x}\int^{N}_{-N} (f_1(s)\bar{f}_2(s)+\bar{f}_1(s)f_2(s))  w(x)\Delta_x \lambda_i(x) d\xi dxds\\ \notag
&& +r^{N,\lambda_i}_{t'}(\gamma, \delta)+\tilde{r}^{N,\lambda_i}_{t'}(\gamma, \delta)+C\delta \gamma^{-1}+C(\gamma^2\delta^{-1}+\delta)+\tilde{L}^{N,\lambda_i}_1+I_{N}\Big]\\ \notag
&\leq& \mathbb{E}\int_{ D}\int^{N}_{-N}(f_{1,0}\bar{f}_{2,0}+\bar{f}_{1,0}f_{2,0})w(x)d\xi dx+\sum^M_{i=0}\mathcal{E}^{N,\lambda_i}_0(\gamma,\delta)
+\sum^M_{i=0}r^{N,\lambda_i}_{t'}(\gamma, \delta)\\
\label{eqq-33}
&&+\sum^M_{i=0}\tilde{r}^{N,\lambda_i}_{t'}(\gamma, \delta)+CM\delta \gamma^{-1}+CM(\gamma^2\delta^{-1}+\delta)+\sum^M_{i=0}(\tilde{L}^{N,\lambda_i}_1+I_{N}),
\end{eqnarray}
where we have used the facts:
\begin{eqnarray*}
  &&-2 n^{-1}\sum^M_{i=0}\sum^d_{j=1} \mathbb{E}\int^{t'}_0\int_{D^{\lambda_i}_x}\int^{N}_{-N} (f_1(s)\bar{f}_2(s)+\bar{f}_1(s)f_2(s)) \partial_{x_j} \lambda_i(x) d\xi dxds\\
  &=& -2 n^{-1}\sum^d_{j=1} \mathbb{E}\int^{t'}_0\int_{D^{\lambda_i}_x}\int^{N}_{-N} (f_1(s)\bar{f}_2(s)+\bar{f}_1(s)f_2(s)) \partial_{x_j} \Big(\sum^M_{i=0}\lambda_i(x)\Big) d\xi dxds=0,
\end{eqnarray*}
and
\begin{eqnarray*}
  &&n^{-1} \sum^M_{i=0}\mathbb{E}\int^{t'}_0\int_{D^{\lambda_i}_x}\int^{N}_{-N} (f_1(s)\bar{f}_2(s)+\bar{f}_1(s)f_2(s))  w(x)\Delta_x \lambda_i(x) d\xi dxds\\
  &=&n^{-1} \mathbb{E}\int^{t'}_0\int_{D^{\lambda_i}_x}\int^{N}_{-N} (f_1(s)\bar{f}_2(s)+\bar{f}_1(s)f_2(s))  w(x)\Delta_x \Big(\sum^M_{i=0}\lambda_i(x)\Big) d\xi dxds=0.
\end{eqnarray*}

Notice that $\lim_{N\rightarrow \infty}\sum^M_{i=0}I_{N}=0$ and by the Lebesgue dominated convergence theorem,
\[
\lim_{N\rightarrow \infty}\mathbb{E}\int_{ D}\int^N_{-N}(f_{1,0}\bar{f}_{2,0}+\bar{f}_{1,0}f_{2,0})w(x)d\xi dx=\mathbb{E}\int_{ D}\int_{\mathbb{R}}(f_{1,0}\bar{f}_{2,0}+\bar{f}_{1,0}f_{2,0})w(x)d\xi dx=0.
\]
Then, by (\ref{rrr-7}), we know that for any $\iota>0$, there exists a big enough constant $N_0$ independent of $\gamma,\delta,n$ such that
\begin{eqnarray*}
|K^n_2(\gamma)-\tilde{K}^{n,N_0}_2(\gamma)|+\mathbb{E}\int_{ D}\int^{N_0}_{-N_0}(f_{1,0}\bar{f}_{2,0}+\bar{f}_{1,0}f_{2,0})w(x)d\xi dx+\sum^M_{i=0}I_{N_0}<\iota.
\end{eqnarray*}
In view of (\ref{eqq-33}), we have
\begin{eqnarray}\notag
K^n_2(\gamma)
&\leq& \iota+\tilde{K}^{n,N_0}_2(\gamma)-\mathbb{E}\int_{ D}\int^{N_0}_{-N_0}(f_{1,0}\bar{f}_{2,0}+\bar{f}_{1,0}f_{2,0})w(x)d\xi dx-\sum^M_{i=0}I_{N_0}\\ \notag
&\leq& \iota+C\delta+2M\Upsilon^{N_0}(\delta)+J^{n,N_0}(\gamma,\delta)-\mathbb{E}\int_{ D}\int^{N_0}_{-N_0}(f_{1,0}\bar{f}_{2,0}+\bar{f}_{1,0}f_{2,0})w(x)d\xi dx-\sum^M_{i=0}I_{N_0}\\ \notag
&\leq& \iota+C\delta+2M\Upsilon^{N_0}(\delta)
 +\sum^M_{i=0}\mathcal{E}^{N_0,\lambda_i}_0(\gamma,\delta)
+\sum^M_{i=0}r^{N_0,\lambda_i}_{t'}(\gamma, \delta)+\sum^M_{i=0}\tilde{r}^{N_0,\lambda_i}_{t'}(\gamma, \delta)\\
\label{s-5}
&&+CM\delta \gamma^{-1}+CM(\gamma^2\delta^{-1}+\delta)+\sum^M_{i=0}\tilde{L}^{N_0,\lambda_i}_1.
\end{eqnarray}
From (\ref{qq-5}), (\ref{eqq-2}), (\ref{eqq-34}) and (\ref{s-4}), we have
\begin{eqnarray*}
\lim_{\gamma,\delta\rightarrow 0}\sum^M_{i=0}\mathcal{E}^{N_0,\lambda_i}_0(\gamma,\delta)=0, \quad \lim_{\gamma,\delta\rightarrow 0}\sum^M_{i=0}r^{N_0,\lambda_i}_{t'}(\gamma, \delta)=0,\\
\lim_{\gamma,\delta\rightarrow 0}\sum^M_{i=0}\tilde{L}^{N_0,\lambda_i}_1=0, \quad \lim_{\gamma,\delta\rightarrow 0}\sum^M_{i=0}\tilde{r}^{N_0,\lambda_i}_{t'}(\gamma, \delta)=0.
\end{eqnarray*}
Taking $\delta=\gamma^{\frac{4}{3}}$, which are independent of $n$, and letting $\gamma\rightarrow 0$ in (\ref{s-5}), we get
\begin{eqnarray*}
\lim_{\gamma,\delta\rightarrow 0}K^n_2(\gamma)\leq \iota.
\end{eqnarray*}
Since $\iota$ is arbitrary,  we deduce that
\begin{eqnarray}\label{s-6}
\lim_{\gamma\rightarrow 0} K^n_2(\gamma)=0,\quad \text{uniformly\ on}\  n.
\end{eqnarray}


Now, we focus on the estimates of $K^n_1(\gamma)$.
%
Let $\eta_{\varepsilon}$ be a symmetric approximation of $|\cdot|$ given by
\[
\eta_{\varepsilon}(0)=\eta'_{\varepsilon}(0)=0,\ \eta''_{\varepsilon}(r)=\varepsilon^{-1}\tilde{\eta}(\varepsilon^{-1}|r|)
\]
for some non-negative $\tilde{\eta}\in C^{\infty}(\mathbb{R})$ which is bounded by 2, supported in $(0,1)$ and integrates to $1$. The following properties of $\eta_{\varepsilon}$ hold:
\begin{eqnarray}\label{rr-6}
  |\eta_{\varepsilon}(r)-|r||\lesssim \varepsilon, \ {\rm{supp}}\ \eta''_{\varepsilon}\subset [-\varepsilon, \varepsilon],\  |\eta''_{\varepsilon}(r)|\leq 2\varepsilon^{-1},\ \eta'_{\varepsilon}(r)\in [0,\frac{1}{2}].
\end{eqnarray}
This implies that
\begin{eqnarray}\notag
K^n_{1}(\gamma) &\lesssim &\varepsilon \|w\|_{L^{1}_x}+\sum^M_{i=0}\mathbb{E}\int_{D^{\lambda_i}_x}\int_{D_y}\eta_{\varepsilon}(u^{n}(t,x)-u^{n}(t',y))
\lambda_i(x)w(x)\rho^{\lambda_i}_{\gamma}(y-x)dydx\\ \notag
&=:& \varepsilon \|w\|_{L^{1}_x}+L.
\end{eqnarray}
 Under Hypotheses (H1)-(H3), we may apply the generalized It\^{o} formula from Proposition A.1 in \cite{DHV} to deduce that
\begin{eqnarray*}
L&=& \sum^M_{i=0}\mathbb{E}\int_{D_y}\int_{D^{\lambda_i}_x}
\eta_{\varepsilon}(u^{n}(t',x)-u^{n}(t',y))\lambda_i(x)w(x)\rho^{\lambda_i}_{\gamma}(y-x)dxdy\\
  && -\frac{1}{n}\sum^M_{i=0}\mathbb{E}\int_{D_y}\int^{t}_{t'}\int_{D^{\lambda_i}_x}
\eta''_{\varepsilon}(u^{n}(s,x)-u^{n}(t',y))|\nabla u^{n}(s,x)|^2 \lambda_i(x)w(x)\rho^{\lambda_i}_{\gamma}(y-x)dxds dy\\
  && -\frac{1}{n}\sum^M_{i=0}\mathbb{E}\int_{D_y}\int^{t}_{t'}\int_{D^{\lambda_i}_x}
\eta'_{\varepsilon}(u^{n}(s,x)-u^{n}(t',y))\nabla u^{n}(s,x)\cdot \nabla(\lambda_i(x)w(x)\rho^{\lambda_i}_{\gamma}(y-x))dxdsdy\\
  && -\sum^M_{i=0}\mathbb{E}\int_{D_y}\int^{t}_{t'}\int_{D^{\lambda_i}_x}
\eta'_{\varepsilon}(u^{n}(s,x)-u^{n}(t',y))a(u^{n}(s,x))\cdot \nabla u^{n}(s,x) \lambda_i(x)w(x)\rho^{\lambda_i}_{\gamma}(y-x)dxdsdy\\
  && +\frac{1}{2}\sum^M_{i=0}\mathbb{E}\int_{D_y}\int^{t}_{t'}\int_{D^{\lambda_i}_x}
\eta''_{\varepsilon}(u^{n}(s,x)-u^{n}(t',y))
  G^2(x,u^{n}(s,x))\lambda_i(x)w(x)\rho^{\lambda_i}_{\gamma}(y-x)dxdsdy\\
  &=:&\sum_{i=1}^5I_i.
\end{eqnarray*}
By (\ref{rr-6}), we have
\begin{eqnarray*}
 I_1&\lesssim& \varepsilon\|w\|_{L^{1}_x}+\sum^M_{i=0}\mathbb{E}\int_{D^{\lambda_i}_x}
  \int_{D_y}|u^{n}(t',x)-u^{n}(t',y)|\lambda_i(x)w(x)\rho^{\lambda_i}_{\gamma}(y-x)dydx\\
  &\leq&\varepsilon\|w\|_{L^{1}_x}+K^n_2(\gamma).
\end{eqnarray*}
Clearly, $I_2\leq 0$.
Using (\ref{rr-5-1}), we deduce that
\begin{eqnarray*}
I_3&\leq& \frac{\gamma^{-1}}{n}\|w\|_{W^{1,\infty}_x}(t-t')^{\frac{1}{2}}\mathbb{E}\Big[\int^t_{t'}\|\nabla u^{n}\|^2_{L^2_x}ds\Big]^{\frac{1}{2}}\\
&\leq& \gamma^{-1}\|w\|_{W^{1,\infty}_x}(t-t')^{\frac{1}{2}}(1+\mathbb{E}\|\vartheta\|^2_{L^2_x}).
\end{eqnarray*}
Letting
\[
H(\xi)=\int^{\xi}_0\eta'_{\varepsilon}(\zeta-u^{n}(t',y))a(\zeta)d\zeta,
\]
by integration by parts formula and (\ref{rr-5-1}), we have
\begin{eqnarray*}
  I_{4}
  &=&-\sum^M_{i=0}\mathbb{E}\int^{t}_{t'}\int_{D^{\lambda_i}_x}\int_{ D_y}(\nabla\cdot H(u^{n}(s,x)))\lambda_i(x)w(x)\rho^{\lambda_i}_{\gamma}(y-x)dy dxds\\
  &=& \sum^M_{i=0}\mathbb{E}\int^{t}_{t'}\int_{D^{\lambda_i}_x}\int_{ D_y}H(u^{n}(s,x))\cdot\nabla(\lambda_i(x)w(x)\rho^{\lambda_i}_{\gamma}(y-x))dy dxds\\
  &\lesssim& \gamma^{-1}\varepsilon^{-1}\|w\|_{W^{1,\infty}_x} \mathbb{E}\int^{t}_{t'}\int_{ D_x}(1+|u^{n}(s,x)|^{q_0+1})dxds\\
  &\lesssim& \gamma^{-1}\varepsilon^{-1}\|w\|_{W^{1,\infty}_x}(t-t')\Big[1+\mathbb{E}\sup_{t\in [0,T]}\|u^{n}(t)\|^{q_0+1}_{L^{q_0+1}_x}\Big]\\
  &\lesssim& \gamma^{-1}\varepsilon^{-1}\|w\|_{W^{1,\infty}_x}(t-t')\Big[1+\mathbb{E}\|\vartheta\|^{q}_{L^{q}_x}\Big].
\end{eqnarray*}
By Hypothesis (H3), it follows that
\begin{eqnarray*}
  I_5&\lesssim& \varepsilon^{-1}\gamma^{-1}\|w\|_{L^{\infty}_x}(t-t')\mathbb{E}\sup_{t\in [0,T]}(1+\|u^{n}(t)\|^2_{L^2_x})\\
  &\lesssim& \varepsilon^{-1}\gamma^{-1}(t-t')(1+\mathbb{E}\|\vartheta\|^2_{L^2_x}).
\end{eqnarray*}
Combining all the above estimates, we get
\begin{eqnarray*}
  K^n_{1}(\gamma)&\leq& 2\varepsilon\|w\|_{L^{\infty}_x}+K^n_2(\gamma)\\
  && +\gamma^{-1}\|w\|_{W^{1,\infty}_x}(t-t')^{\frac{1}{2}}(1+\mathbb{E}\|\vartheta\|^2_{L^2_x})
  +\gamma^{-1}\varepsilon^{-1}\|w\|_{W^{1,\infty}_x}(t-t')\Big[1+\mathbb{E}\|\vartheta\|^{q}_{L^{q}_x}\Big]
   \\
  &&
 +\varepsilon^{-1}\gamma^{-1}(t-t')\mathbb{E}(1+\|\vartheta\|^2_{L^2_x}).
\end{eqnarray*}
Thus, we conclude that
\begin{eqnarray*}\notag
&&\mathbb{E}\int_{D}|u^{n}(t,x)-u^{n}(t',x)|w(x)dx \\ \notag
 &\leq & 2\varepsilon\|w\|_{L^{\infty}_x} +\gamma^{-1}\|w\|_{W^{1,\infty}_x}(t-t')^{\frac{1}{2}}(1+\mathbb{E}\|\vartheta\|^2_{L^2_x})
  +\gamma^{-1}\varepsilon^{-1}\|w\|_{W^{1,\infty}_x}(t-t')\Big[1+\mathbb{E}\|\vartheta\|^{q}_{L^{q}_x}\Big]
   \\
  &&
 +\varepsilon^{-1}\gamma^{-1}(t-t')\mathbb{E}(1+\|\vartheta\|^2_{L^2_x})+2K^n_2(\gamma).
\end{eqnarray*}
Due to (\ref{s-6}), for any $\tau>0$, there exists a small positive constant $\gamma_0$, independent of $n$, such that
$K^n_2(\gamma_0)<\frac{\tau}{4}$.
For such $\tau$, we can choose small positive constants $\varepsilon_0$ and $\varsigma$ independent of $n$ such that for any $t,t'\in \mathcal{I}$ with $|t-t'|<\varsigma$ such that
\begin{eqnarray*}
&& 2\varepsilon_0\|w\|_{L^{\infty}_x} +\gamma^{-1}_0\|w\|_{W^{1,\infty}_x}\varsigma^{\frac{1}{2}}(1+\mathbb{E}\|\vartheta\|^2_{L^2_x})
  +\gamma^{-1}_0\varepsilon^{-1}_0\|w\|_{W^{1,\infty}_x}\varsigma\Big[1+\mathbb{E}\|\vartheta\|^{q}_{L^{q}_x}\Big]
   \\
  &&
 +\varepsilon^{-1}_0\gamma^{-1}_0\varsigma\mathbb{E}(1+\|\vartheta\|^2_{L^2_x})
 <\frac{\tau}{2}.
\end{eqnarray*}
Thus, we have proved that for any $\tau>0$, there exists $\varsigma>0$, independent of $n$, such that for every $t,t'\in \mathcal{I}$ with $|t-t'|<\varsigma$, it holds that
\begin{eqnarray*}
   \mathbb{E}\|u^{n}(t;\vartheta)-u^{n}(t';\vartheta)\|_{L^1_{w;x}}<\tau,
\end{eqnarray*}
which is the desired result (\ref{rr-9}). Taking $n\rightarrow \infty$ on (\ref{rr-9}) implies
\begin{eqnarray*}
 \mathbb{E}\|u(t;\vartheta)-u(t';\vartheta)\|_{L^1_{w;x}} \leq  \tau.
\end{eqnarray*}
As a result, $u: \mathcal{I}\rightarrow L^1_{\omega}L^1_{w;x}$ is uniformly continuous, hence it has a unique continuous extension on $[0,T]$.

\end{proof}

\begin{proof}[\textbf{Proof of Proposition \ref{prpo-1}}]
 For any $\vartheta\in L^{1}_{w;x}$, there exists a sequence $\{\vartheta_n\}_{n\geq 1}\subset L^{q}_{x} $ such that $\vartheta_n\rightarrow \vartheta$ in $L^{1}_{w;x}$. From Theorem \ref{thm-10}, we have $u(\cdot;\vartheta_n)\in C([0,T];L^1_{\omega}L^1_{w;x})$ for each $n\geq 1$. Furthermore, we deduce from Theorem \ref{thm-8} that for any $m>n\geq 1$,
 \[
 \sup_{t\in [0,T]}\mathbb{E}\|u(t;\vartheta_m)-u(t;\vartheta_n)\|_{L^1_{w;x}}\leq \mathbb{E}\|\vartheta_m-\vartheta_n\|_{L^1_{w;x}}.
 \]
Thus, $\{u(\cdot; \vartheta_n)\}_{n\geq 1}$ is a Cauchy sequence in $C([0,T];L^{1}_{\omega}L^{1}_{w;x})$, which yields that the limit $v(\cdot;\vartheta):=\lim_{n\rightarrow \infty}u(\cdot;\vartheta_n)$ exists in $C([0,T];L^{1}_{\omega}L^{1}_{w;x})$. Moreover,  using Theorem \ref{thm-8}, we see that the limit $v(\cdot;\vartheta)$ is independent of the choices of $\{\vartheta_n\}_{n\geq 1}$. Clearly, for $\vartheta\in L^{q}_{\omega}L^{q}_x$, we have $v(\cdot; \vartheta)=u(\cdot;\vartheta)$ in $C([0,T];L^{1}_{\omega}L^{1}_{w;x})$. Thus, $v$ is the unique continuous extension of $u$ on $C([0,T];L^{1}_{\omega}L^{1}_{w;x})$. Finally, (\ref{e-43}) follows easily by construction.

\end{proof}

\section{Ergodicity}\label{sec-1}
In this section, we will prove the main result Theorem \ref{thm-3}.
First, we obtain a polynomial decay for the difference of kinetic solutions of (\ref{P-19})-(\ref{P-19-2}) with different initial conditions.
\begin{prp}\label{prp-6}
Assume Hypotheses (H1)-(H3) are in force. Let $u(t;\vartheta)$ and $u(t;\tilde{\vartheta})$ be the kinetic solutions of (\ref{P-19})-(\ref{P-19-2}) with initial datums $\vartheta,\tilde{\vartheta}\in L^{q}_{\omega}L^{q}_{x}$.
Then for all $t\geq 0$,
  \begin{eqnarray}\label{e-18}
   \sup_{\vartheta,\tilde{\vartheta}\in L^{q}_{\omega}L^{q}_{x}} \mathbb{E}\|u(t;\vartheta)-u(t;\tilde{\vartheta})\|_{L^1_{w;x}}\leq C_{q_0}\|w\|^{q^*}_{L^{q^*}_x}t^{-\frac{1}{q_0}},
  \end{eqnarray}
 where the constant $C_{q_0}$ depends only on $q_0$ and $q^*=\frac{q_0+1}{q_0}$.
\end{prp}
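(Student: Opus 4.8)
The plan is to reduce the statement to a scalar differential inequality for
\[
h(t):=\mathbb{E}\|u(t;\vartheta)-u(t;\tilde\vartheta)\|_{L^1_{w;x}},\qquad t\in[0,T].
\]
First I would record the elementary properties of $h$. By Theorem~\ref{thm-10} the maps $u(\cdot;\vartheta),u(\cdot;\tilde\vartheta)$ belong to $C([0,T];L^1_\omega L^1_{w;x})$, so $h$ is continuous; it is finite because $\vartheta,\tilde\vartheta\in L^q_\omega L^q_x$ and $L^q_x\hookrightarrow L^1_x\simeq L^1_{w;x}$ on the bounded domain $D$. By Lemma~\ref{lem-2}, $h(t)\le h(s)$ for $s\le t$, so $h$ is non-increasing. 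If $h(t_0)=0$ for some $t_0>0$ then $h\equiv0$ on $[t_0,T]$ and the estimate is trivial there, so from now on I may assume $h>0$ on the interval under consideration; the case $t=0$ is trivial since the right-hand side is $+\infty$.

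The next step is to feed the strictly negative dissipation term of the ``super $L^1_{w;x}$-contraction'' into the non-degeneracy hypothesis on the flux. Combining Lemma~\ref{lem-2} with the lower bound (\ref{eqq-36}) of (H1) gives, for a.e.\ $0\le s<t\le T$,
\[
h(t)\le h(s)-C_{q_0}\,\mathbb{E}\int_s^t\!\!\int_D |u(r;\vartheta)-u(r;\tilde\vartheta)|^{q_0+1}\,dx\,dr .
\]
I would then bound the dissipation from below by $h$ itself. Since $\tfrac1{q_0+1}+\tfrac1{q^*}=1$ with $q^*=\tfrac{q_0+1}{q_0}$, Hölder's inequality in $x$ yields, for any $g$,
\[
\|g\|_{L^1_{w;x}}=\int_D|g|\,w\,dx\le\Big(\int_D|g|^{q_0+1}dx\Big)^{\frac1{q_0+1}}\|w\|_{L^{q^*}_x},
\]
hence $\int_D|g|^{q_0+1}dx\ge\|g\|_{L^1_{w;x}}^{q_0+1}\|w\|_{L^{q^*}_x}^{-(q_0+1)}$; applying this with $g=u(r;\vartheta)-u(r;\tilde\vartheta)$ and then Jensen's inequality (convexity of $x\mapsto x^{q_0+1}$) to pull $\mathbb{E}$ inside, I get $\mathbb{E}\int_D|u(r;\vartheta)-u(r;\tilde\vartheta)|^{q_0+1}dx\ge h(r)^{q_0+1}\|w\|_{L^{q^*}_x}^{-(q_0+1)}$. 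With $c:=C_{q_0}\|w\|_{L^{q^*}_x}^{-(q_0+1)}$ this gives, using continuity of both sides in $(s,t)$ to upgrade from a.e.\ to every pair,
\[
h(t)\le h(s)-c\int_s^t h(r)^{q_0+1}\,dr,\qquad 0\le s<t\le T .
\]

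The final step is a comparison with the ODE $y'=-c\,y^{q_0+1}$, whose solutions satisfy $y(t)^{-q_0}=y(0)^{-q_0}+cq_0t\ge cq_0t$; the decisive point is that the resulting bound $y(t)\le(cq_0t)^{-1/q_0}$ is \emph{independent of $y(0)$}, which is what makes the final estimate uniform in $\vartheta,\tilde\vartheta$. To make this rigorous I would use that $h$ is monotone and continuous, hence differentiable a.e.; evaluating the displayed inequality on $(r-\varepsilon,r+\varepsilon)$, dividing by $2\varepsilon$ and letting $\varepsilon\downarrow0$ gives $h'(r)\le -c\,h(r)^{q_0+1}$ at a.e.\ $r$. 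Then $G:=h^{-q_0}$ is non-decreasing with $G'(r)=-q_0h(r)^{-q_0-1}h'(r)\ge cq_0$ a.e., and since the singular part of $dG$ is non-negative, $G(t)-G(0)\ge\int_0^t G'(r)\,dr\ge cq_0t$, i.e.\ $h(t)^{-q_0}\ge cq_0t$. Therefore
\[
h(t)\le(cq_0t)^{-1/q_0}=(C_{q_0}q_0)^{-1/q_0}\,\|w\|_{L^{q^*}_x}^{(q_0+1)/q_0}\,t^{-1/q_0}=(C_{q_0}q_0)^{-1/q_0}\,\|w\|_{L^{q^*}_x}^{q^*}\,t^{-1/q_0},
\]
and taking the supremum over $\vartheta,\tilde\vartheta\in L^q_\omega L^q_x$ gives (\ref{e-18}) with a constant depending only on $q_0$ (through the constant $C_{q_0}$ of (\ref{eqq-36})).

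\textbf{Main obstacle.} The substantive input — the strictly negative dissipation term — is already supplied by Lemma~\ref{lem-2}, so nothing deep remains; the only care needed is bookkeeping: justifying that the a.e.\ version of Lemma~\ref{lem-2} upgrades to the pointwise differential inequality for $h$ (this rests on the continuity of $h$ from Theorem~\ref{thm-10} and on $h$ being monotone), and tracking exponents so that the power of $\|w\|_{L^{q^*}_x}$ comes out to exactly $q^*$.
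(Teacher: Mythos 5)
Your proposal is correct and follows essentially the same route as the paper: Lemma \ref{lem-2} combined with (\ref{eqq-36}) gives the dissipative integral inequality, Hölder (in $x$) plus Jensen (in $\omega$) converts the dissipation into a power of $h$, and an ODE comparison with $y'=-cy^{q_0+1}$ yields the initial-data-independent bound $(cq_0t)^{-1/q_0}$. The only difference is that you spell out the comparison argument (via monotonicity of $G=h^{-q_0}$ and the a.e.\ derivative) where the paper simply cites Theorem 3.8 of \cite{DGT20}; your version is a legitimate filling-in of that step.
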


\begin{proof}
By (\ref{e-19}) and using Hypothesis (H1), we deduce that
   \begin{eqnarray*}
  \mathbb{E}\|u(t;\vartheta)-u(t;\tilde{\vartheta})\|_{L^1_{w;x}}\leq \mathbb{E}\|u(s;\vartheta)-u(s;\tilde{\vartheta})\|_{L^1_{w;x}}-C_{q_0}\mathbb{E}\int^t_s\|u(r;\vartheta)-u(r;\tilde{\vartheta})\|^{q_0+1}_{L^{q_0+1}_{x}}dr,
  \end{eqnarray*}
  where the constant $C_{q_0}$ depends only on $q_0$.

  By H\"{o}lder inequality,
  \begin{eqnarray*}
  \mathbb{E}\|u(r;\vartheta)-u(r;\tilde{\vartheta})\|_{L^1_{w;x}}\leq \Big(\mathbb{E}\|u(r;\vartheta)-u(r;\tilde{\vartheta})\|^{q_0+1}_{L^{q_0+1}_{x}}\Big)^{\frac{1}{q_0+1}}\|w\|_{L^{q^*}_x},
  \end{eqnarray*}
  where $q^*=\frac{q_0+1}{q_0}$.
 Thus, we deduce that for any $0\leq s<t<T$,
    \begin{eqnarray}\notag
  &&\mathbb{E}\|u(t;\vartheta)-u(t;\tilde{\vartheta})\|_{L^1_{w;x}}\\  \label{e-22-1}
  &\leq & \mathbb{E}\|u(s;\vartheta)-u(s;\tilde{\vartheta})\|_{L^1_{w;x}}-C_{q_0}\|w\|^{-(q_0+1)}_{L^{q^*}_x}\int^t_s\Big(\mathbb{E}\|u(r;\vartheta)-u(r;\tilde{\vartheta})\|_{L^1_{w;x}}\Big)^{q_0+1}dr.
  \end{eqnarray}
  Set $f(t):=\mathbb{E}\|u(t;\vartheta)-u(t;\tilde{\vartheta})\|_{L^1_{w;x}}$. From Theorem \ref{thm-10}, we know that $f(t)$ is continuous on $[0,T]$. (\ref{e-22-1}) yields
  \begin{eqnarray*}
    f(t)-f(s)\leq -C_{q_0}\|w\|^{-(q_0+1)}_{L^{q^*}_x}\int^t_s f^{q_0+1}(r)dr,
  \end{eqnarray*}
  for all $0\leq s<t<T$.
Hence, by a comparison argument as in the proof of Theorem 3.8 in \cite{DGT20}, we obtain
  \begin{eqnarray*}
    \mathbb{E}\|u(t;\vartheta)-u(t;\tilde{\vartheta})\|_{L^1_{w;x}}\leq C_{q_0}\|w\|^{q^*}_{L^{q^*}_x}t^{-\frac{1}{q_0}},
  \end{eqnarray*}
 which is the desired result.
\end{proof}

As a consequence, we have the following
\begin{cor}\label{cor-5}
  For the unique continuous extension $v$ of $u$ given by Proposition \ref{prpo-1}, it holds that for any $t>0$,
  \begin{eqnarray}\label{z-3}
   \sup_{\vartheta,\tilde{\vartheta}\in L^{1}_{w;x}} \mathbb{E}\|v(t;\vartheta)-v(t;\tilde{\vartheta})\|_{L^1_{w;x}}\leq C_{q_0}\|w\|^{q^*}_{L^{q^*}_x}t^{-\frac{1}{q_0}},
  \end{eqnarray}
 with the constant $C_{q_0}$ depends only on $q_0$.
\end{cor}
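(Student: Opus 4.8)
The plan is to transfer the polynomial decay estimate (\ref{e-18}) of Proposition \ref{prp-6}, which is proved for initial data in $L^q_\omega L^q_x$, to the unique continuous extension $v(t;\vartheta)$ for arbitrary initial data $\vartheta, \tilde\vartheta \in L^1_{w;x}$, by a density argument. First I would recall from Proposition \ref{prpo-1} that for any $\vartheta \in L^1_{w;x}$ there is a sequence $\{\vartheta_n\}_{n\geq 1} \subset L^q_x$ (hence in $L^q_\omega L^q_x$, viewing these as deterministic initial data) with $\vartheta_n \to \vartheta$ in $L^1_{w;x}$, and that by construction $u(\cdot;\vartheta_n) \to v(\cdot;\vartheta)$ in $C([0,T];L^1_\omega L^1_{w;x})$; similarly pick $\{\tilde\vartheta_n\}_{n\geq 1} \subset L^q_x$ with $\tilde\vartheta_n \to \tilde\vartheta$ in $L^1_{w;x}$ and $u(\cdot;\tilde\vartheta_n) \to v(\cdot;\tilde\vartheta)$ in $C([0,T];L^1_\omega L^1_{w;x})$.

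Next, for each fixed $n$, Proposition \ref{prp-6} applies to the pair $(\vartheta_n,\tilde\vartheta_n) \in L^q_\omega L^q_x$ and gives, for every $t>0$,
\begin{eqnarray*}
\mathbb{E}\|u(t;\vartheta_n)-u(t;\tilde\vartheta_n)\|_{L^1_{w;x}} \leq C_{q_0}\|w\|^{q^*}_{L^{q^*}_x} t^{-\frac{1}{q_0}},
\end{eqnarray*}
with $C_{q_0}$ and the exponent $q^* = \frac{q_0+1}{q_0}$ independent of $n$. Then I would pass to the limit $n\to\infty$: by the triangle inequality in $L^1_\omega L^1_{w;x}$,
\begin{eqnarray*}
\left| \mathbb{E}\|v(t;\vartheta)-v(t;\tilde\vartheta)\|_{L^1_{w;x}} - \mathbb{E}\|u(t;\vartheta_n)-u(t;\tilde\vartheta_n)\|_{L^1_{w;x}} \right| \leq \mathbb{E}\|v(t;\vartheta)-u(t;\vartheta_n)\|_{L^1_{w;x}} + \mathbb{E}\|v(t;\tilde\vartheta)-u(t;\tilde\vartheta_n)\|_{L^1_{w;x}},
\end{eqnarray*}
and the right-hand side tends to $0$ as $n\to\infty$ (indeed uniformly in $t\in[0,T]$, but pointwise in $t$ suffices here). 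Hence $\mathbb{E}\|v(t;\vartheta)-v(t;\tilde\vartheta)\|_{L^1_{w;x}} \leq C_{q_0}\|w\|^{q^*}_{L^{q^*}_x} t^{-\frac{1}{q_0}}$ for the fixed pair $\vartheta,\tilde\vartheta$, and taking the supremum over all $\vartheta,\tilde\vartheta \in L^1_{w;x}$ yields (\ref{z-3}).

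There is essentially no serious obstacle here — the corollary is a routine density/limiting argument built on the already-established Proposition \ref{prp-6} and Proposition \ref{prpo-1}. The only minor point requiring care is that the constant in Proposition \ref{prp-6} must genuinely be independent of the initial data (which it is, depending only on $q_0$ through Hypotheses (H1)), so that it survives the passage to the limit and the subsequent supremum; one should also note that the estimate (\ref{e-18}) is stated for $t \geq 0$ but is only informative for $t>0$, and that for $t$ close to the endpoint one may cover $[0,\infty)$ by finitely many or countably many intervals $[0,T]$ and use the semigroup/concatenation property together with Lemma \ref{lem-2} if needed — though in fact, since the bound $t^{-1/q_0}$ is monotone and Proposition \ref{prp-6} is already valid for all $t\geq 0$ on each interval, no such patching is actually necessary. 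I would simply remark that the same argument applies on every $[0,T]$ and hence for all $t>0$.
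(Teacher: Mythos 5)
Your proposal is correct and is exactly the argument the paper intends: the corollary is stated as an immediate consequence of Proposition \ref{prp-6} combined with the density construction of $v$ in Proposition \ref{prpo-1}, i.e.\ the uniform-in-data bound for $L^q$ initial conditions passes to the limit along the approximating sequences $\vartheta_n\to\vartheta$, $\tilde\vartheta_n\to\tilde\vartheta$ via the triangle inequality. No gaps; the remarks about the constant's independence of the data and the restriction to $t>0$ are apt.
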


For technical reasons,  we extend the time horizon to $-\infty$. We need the following notations. For the data $ \vartheta\in L^{q}_{\omega}L^{q}_x$ and $s>-\infty$, we denote by $u_s(\cdot;\vartheta)$ the kinetic solution of
\begin{eqnarray}\label{e-23}
\left\{
  \begin{array}{ll}
    \partial_t u_s(t,x;\vartheta)=div A(u_s(t,x;\vartheta))+\sum_{k\geq1}\sigma^k(x,u_s(t,x;\vartheta))\dot{\beta}_k(t)\quad {\rm{in}} \ \Omega\times D\times(0,T),\\
   u_s(s,x;\vartheta)=\vartheta \quad {\rm{in}} \ \Omega\times D,\\
   u_s(s,\cdot;\vartheta)=0 \quad {\rm{on}} \ \Omega\times \Sigma,
  \end{array}
\right.
\end{eqnarray}
for $t\geq s$, where we have extended $\beta_k(t)$ for $t<0$ by gluing at $t=0$ an independent Brownian motion evolving backwards in time. According to this new notation,   $u_0(\cdot;\vartheta)=u(\cdot;\vartheta)$.
The global well-posedness of (\ref{e-23}) for the case $s\neq 0$ can be obtained analogously as the case $s=0$.
In addition, for the mapping
  \[
  L^{q}_{\omega}L^{q}_x\ni \vartheta\mapsto u_s(\cdot; \vartheta)\in C([s,\infty);L^{1}_{\omega}L^{1}_{w;x}),
  \]
we use $v_s(\cdot;\cdot)$ to denote its unique continuous extension from $L^{1}_{w;x}$ to $C([s,\infty);L^{1}_{\omega}L^{1}_{w;x})$ as stated in Theorem 3.2.

The following result is the cocycle property of the dynamic generated by (\ref{e-23}).
\begin{prp}\label{prp-5}
For every $\vartheta\in L^{q}_x$ and $-\infty<s_1\leq s_2\leq t< T$, it holds true that $u_{s_1}(t;\vartheta)=u_{s_2}(t;u_{s_1}(s_2;\vartheta))$ in $L^1_{\omega}L^1_{w;x}$.
\end{prp}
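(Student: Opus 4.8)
The plan is to first establish the identity $u_{s_1}(t;\vartheta)=u_{s_2}(t;u_{s_1}(s_2;\vartheta))$ for regular initial data $\vartheta\in L^q_{\omega}L^q_x$ at the level of kinetic solutions, and then pass to the continuous extension $v$ by density. For the regular case, the key observation is a uniqueness argument: both sides solve, on the time interval $[s_2,T)$, the same initial-Dirichlet boundary value problem \eqref{e-23} driven by the \emph{same} realization of the noise $(\beta_k(t))_{k\ge1}$ (recall the noise has been glued consistently at $t=0$, so the increments of $\beta_k$ on $[s_2,t]$ are identical for both problems), with the same flux $A$ and coefficient $\Phi$, and with the same initial datum at time $s_2$, namely $u_{s_1}(s_2;\vartheta)$. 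First I would check that $w:=u_{s_1}(s_2;\cdot)$ is an admissible initial datum: by the regularity estimate \eqref{eqq-31} (shifted to start at $s_1$, which is legitimate by the analogue of Theorem \ref{thm-9} for \eqref{e-23}), $u_{s_1}(s_2;\vartheta)\in L^q_{\omega}L^q_x$ and is $\mathcal{F}_{s_2}$-measurable. Then the restriction of $u_{s_1}(\cdot;\vartheta)$ to $[s_2,T)$ is a kinetic solution of $\mathcal{E}(A,\Phi,u_{s_1}(s_2;\vartheta))$ started at $s_2$ — this requires verifying that the kinetic formulation \eqref{P-21}--\eqref{P-21-1} (equivalently the strengthened form of Lemma \ref{lem-5} with starting point $s_2$) holds with initial trace $f^+_{s_2}=I_{u_{s_1}(s_2;\cdot)>\xi}$, which follows from Lemma \ref{lem-5} itself together with Proposition \ref{prp-3} identifying the left/right traces. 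By the uniqueness part of Theorem \ref{thm-9} (extended to \eqref{e-23}), the two kinetic solutions coincide $\mathbb{P}$-a.s. for a.e. $(t,x)$, hence in $L^1_{\omega}L^1_{w;x}$ for all $t\in[s_2,T)$ after taking the continuous-in-time versions from Theorem \ref{thm-10}.

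Second, I would upgrade from $L^q_{\omega}L^q_x$ data to $L^1_{w;x}$ data. Fix $\vartheta\in L^1_{w;x}$ and pick $\vartheta_n\in L^q_x$ with $\vartheta_n\to\vartheta$ in $L^1_{w;x}$. By Proposition \ref{prpo-1} (in the shifted form stated just above Proposition \ref{prp-5}), $u_{s_1}(\cdot;\vartheta_n)\to v_{s_1}(\cdot;\vartheta)$ in $C([s_1,\infty);L^1_{\omega}L^1_{w;x})$; in particular $u_{s_1}(s_2;\vartheta_n)\to v_{s_1}(s_2;\vartheta)$ in $L^1_{\omega}L^1_{w;x}$. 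For the regular data the cocycle identity already gives $u_{s_1}(t;\vartheta_n)=u_{s_2}(t;u_{s_1}(s_2;\vartheta_n))$ in $L^1_{\omega}L^1_{w;x}$. Passing to the limit on the left side is immediate. For the right side I would use the contraction estimate \eqref{e-43} applied to the flow started at $s_2$: for $\mathcal{F}_{s_2}$-measurable random initial data $\vartheta',\vartheta''\in L^1_{w;x}$,
\[
\sup_{t\ge s_2}\mathbb{E}\|v_{s_2}(t;\vartheta')-v_{s_2}(t;\vartheta'')\|_{L^1_{w;x}}\le \mathbb{E}\|\vartheta'-\vartheta''\|_{L^1_{w;x}};
\]
applying this with $\vartheta'=u_{s_1}(s_2;\vartheta_n)$ (which equals $v_{s_2}$ evaluated on it, since it is $L^q$) and $\vartheta''=v_{s_1}(s_2;\vartheta)$ shows $u_{s_2}(t;u_{s_1}(s_2;\vartheta_n))\to v_{s_2}(t;v_{s_1}(s_2;\vartheta))$ in $L^1_{\omega}L^1_{w;x}$. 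Combining the two limits yields $v_{s_1}(t;\vartheta)=v_{s_2}(t;v_{s_1}(s_2;\vartheta))$, which is the asserted identity written with the extension notation. (For $\vartheta$ literally in $L^q_x$ as in the statement, the first step alone suffices.)

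The main obstacle I anticipate is the careful handling of the time shift in the kinetic formulation: one must be certain that the initial trace appearing when restarting the kinetic solution at time $s_2$ is exactly $I_{u_{s_1}(s_2;x;\vartheta)>\xi}$ and not merely one of the one-sided limits $f^{s_2\pm}$, and that the kinetic measure for the restarted problem is the restriction $m\llcorner (s_2,T)$ of the original kinetic measure. This is where Proposition \ref{prp-3} (existence of left/right weak limits, with the jump governed by the atom of $m$ at $\{s_2\}$) and the almost-sure continuity in $L^q_x$ from Theorem \ref{thm-9} are essential: continuity forces $f^{s_2+}=f^{s_2-}=I_{u_{s_1}(s_2;\cdot)>\xi}$ for a.e. $s_2$, so the restarted problem has an unambiguous datum, and Lemma \ref{lem-5} then is precisely the kinetic formulation of $\mathcal{E}(A,\Phi,u_{s_1}(s_2;\vartheta))$ on $[s_2,T)$. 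A secondary technical point is measurability: $u_{s_1}(s_2;\cdot)$ must serve as an $\mathcal{F}_{s_2}$-measurable initial condition, which follows from the predictability built into Definitions \ref{dfn-3} and \ref{dfn-1} together with the adaptedness of the solution. Once these points are pinned down, the proof is a direct application of uniqueness plus the contraction bounds \eqref{ee-34} and \eqref{e-43}.
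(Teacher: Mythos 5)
Your proposal is correct and follows essentially the same route as the paper: restrict the kinetic solution to $[s_2,T)$, use Lemma \ref{lem-5} to recognize it as a kinetic solution of $\mathcal{E}(A,\Phi,u_{s_1}(s_2;\vartheta))$, check that the datum at $s_2$ lies in $L^q_{\omega}L^q_x$, and invoke uniqueness. The only point you gloss over is that \eqref{eqq-31} is an essential supremum in $t$ and so does not directly bound $\mathbb{E}\|u(s_2)\|^q_{L^q_x}$ at the fixed time $s_2$; the paper fills this in by taking $s_n\to s_2$ along which convergence holds and applying Fatou's lemma, while your additional density step for $L^1_{w;x}$ data is, as you note, not needed for the statement.
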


\begin{proof}
Fix $s>0$, without loss of generality, we only need to prove the case $(s_1,s_2)=(0,s)$ for some $0<s<T$. Let $\vartheta\in L^{q}_x$ and $u(t;\vartheta)$ be a kinetic solution of (\ref{e-23}) on $[0,T)$. From Lemma \ref{lem-5}, we know that
for all $\varphi\in C^{\infty}_c(\mathbb{R}^d\times \mathbb{R})$, the kinetic solution $u(t;\vartheta)$ satisfies
 \begin{eqnarray}\notag
    &&-\int_D\int^N_{-N}\Psi_{\eta}f^+(t)\varphi d\xi dx+\int^t_s\int_D\int^N_{-N}\Psi_{\eta}f a(\xi)\cdot \nabla \varphi d\xi dxdr\\ \notag
          && +\int_D\int^N_{-N}\Psi_{\eta}f^+_s\varphi d\xi dx+\int^t_s\int_{\partial D}\int^N_{-N}\Psi_{\eta}(-a(\xi)\cdot \mathbf{n}) \tilde{f}\varphi d\xi d\sigma dr\\ \notag
          &=& -\sum_{k\geq 1}\int^t_s\int_D\int^N_{-N}\Psi_{\eta}(\xi)g_k(x,\xi)\varphi d\nu_{x,r}(\xi)dxd\beta_k(r)\\ \notag
          &&-\frac{1}{2}\int^t_s\int_D\int^N_{-N}\Psi_{\eta}(\xi)\partial_{\xi}\varphi G^2(x,\xi)d\nu_{x,r}(\xi)dxdr+\int_{(s,t]\times D\times (-N,N)}\Psi_{\eta}(\xi)\partial_{\xi}\varphi dm\\ \notag
          &&
          +\frac{1}{2}\int^t_s\int_D\int^N_{-N}\Big(\psi_{\eta}(N-\xi-\eta)-\psi_{\eta}(N+\xi-\eta)\Big)G^2(x,\xi)\varphi d\nu_{r,x}(\xi)dxdr\\ \label{eqq-30}
          && -\int_{(s,t]\times D\times (-N,N)}\Big(\psi_{\eta}(N-\xi-\eta)-\psi_{\eta}(N+\xi-\eta)\Big)\varphi dm \quad a.s.
 \end{eqnarray}
on $[s,T)$. Moreover, we claim that $u(s;\vartheta)\in L^{q}_{\omega}L^{q}_{x}$. Indeed, by
(\ref{eqq-31}), we have
\begin{eqnarray*}
\underset{0\leq t\leq T}{{\rm{ess\sup}}}\ \mathbb{E}\|u(t)\|^{q}_{L^{q}_x}\leq C_{q}.
\end{eqnarray*}
 By Theorem \ref{thm-10}, there exists a sequence $s_n\rightarrow s$ such that $u(s_n, x)\rightarrow u(s,x)$ for almost every $(\omega,x)$ and
\[
\sup_{n\geq 1}\mathbb{E}\|u(s_n)\|^{q}_{L^{q}_x}\leq C_{q}.
\]
Then, by Fatou's lemma, we get
\[
\mathbb{E}\|u(s)\|^{q}_{L^{q}_x}\leq \liminf_{n\rightarrow\infty}\mathbb{E}\|u(s_n)\|^{q}_{L^{q}_x}\leq\sup_{n\geq 1}\mathbb{E}\|u(s_n)\|^{q}_{L^{q}_x}\leq C_{q}.
\]
 Hence, $u(s;\vartheta)\in L^{q}_{\omega}L^{q}_{x}$. Now, we can apply the uniqueness of kinetic solutions to (\ref{eqq-30}) to conclude that $u(t;\vartheta)=u_s(t;u(s;\vartheta))$ in $L^1_{\omega}L^1_{w;x}$ for every $t\in [s,T)$.

\end{proof}

 Proposition \ref{prp-4} says that the mappings $P_t, t\geq 0$ define a Feller semigroup.

\begin{proof}[\textbf{Proof of Proposition \ref{prp-4}}]
After the preparations in Proposition 6.3, the proof now follows from standard arguments, see, e.g. Theorem 9.14 (or Theorem 9.8) in \cite{DPZ}. We omit the details.
\end{proof}

\vskip 0.4cm
Now, we are in a position to prove the polynomial mixing of $P_t$.
\begin{proof}[\textbf{Proof of Theorem \ref{thm-3}}]
For any $\vartheta \in L^{q}_{x}$, denote by $\eta_{s}(\vartheta)=u_s(0;\vartheta)$. By Proposition \ref{prp-5}, for $s_1\leq s_2\leq -1$, it follows that
  $\eta_{s_1}(\vartheta)=u_{s_2}(0;u_{s_1}(s_2;\vartheta))$ in $L^1_{\omega}L^1_{w;x}$. Hence,
\[
\eta_{s_2}(\vartheta)-\eta_{s_1}(\vartheta)=u_{s_2}(0;\vartheta)-u_{s_2}(0;u_{s_1}(s_2;\vartheta))
\]
   in $L^1_{\omega}L^1_{w;x}$. By Theorem \ref{thm-10} and Corollary \ref{cor-5}, we have
   \begin{eqnarray}
     \mathbb{E}\|\eta_{s_2}(\vartheta)-\eta_{s_1}(\vartheta)\|_{L^1_{w;x}}\lesssim \|w\|^{q^*}_{L^{q^*}_x}|s_2|^{-\frac{1}{q_0}},
   \end{eqnarray}
  which implies that $(\eta_s(\vartheta))_{s\leq -1}$ is a Cauchy sequence in $L^1_{\omega}L^1_{w;x}$. Hence, there exists a random variable $X(\vartheta)\in L^1_{\omega}L^1_{w;x}$ such that $\eta_s(\vartheta)\rightarrow X(\vartheta)$ in $L^1_{\omega}L^1_{w;x}$, as $s\rightarrow -\infty$.

  We claim that $X(\vartheta)$ is independent of the initial data $\vartheta$. Indeed, for any $\vartheta, \tilde{\vartheta}\in L^{q}_x$, by Proposition \ref{prp-6}, we have
   \begin{eqnarray}
     \mathbb{E}\|\eta_{s}(\vartheta)-\eta_{s}(\tilde{\vartheta})\|_{L^1_{w;x}}\lesssim \|w\|^{q^*}_{L^{q^*}_x}|s|^{-\frac{1}{q_0}}.
   \end{eqnarray}
  Then letting $s\rightarrow -\infty$, we have $X(\vartheta)=X(\tilde{\vartheta})$ in $L^1_{\omega}L^1_{w;x}$.

Let $X=X(0)$ and define $\mu=\mathbb{P}\circ X^{-1}\in \mathcal{M}_1(L^1_{w;x})$.
Next, we verify that $\mu$ is an invariant measure of $P_t$.
Denote by $P_{s,t}$ the semigroup associated to (\ref{e-23}) at time $t$, then $P_t=P_{0,t}$ for any $t\geq 0$. Keeping in mind that  $\eta_{-s}(0)\rightarrow X(0)$ in $L^1_{\omega}L^1_{w;x}$, when $s\rightarrow \infty$, we have
  \begin{eqnarray}\notag
    \int_{L^1_{w;x}}P_{0,t}F(\xi)\mu(d\xi)&=&
    \mathbb{E}P_{0,t}F(X(0))=\lim_{s\rightarrow \infty}\mathbb{E}P_{0,t}F(\eta_{-s}(0))\\ \notag
    &=&\lim_{s\rightarrow \infty}\mathbb{E}P_{0,t}F(u_{-s}(0;0))=\lim_{s\rightarrow \infty}\mathbb{E}P_{0,t}F(v_{-s}(0;0))\\ \notag
    &=&\lim_{s\rightarrow \infty}P_{-s,0}(P_{0,t}F)(0)=\lim_{s\rightarrow \infty}P_{-s,t}F(0)=\lim_{s\rightarrow \infty}P_{-(t+s),0}F(0)\\ \notag
    &=& \lim_{s\rightarrow \infty}\mathbb{E}F(v_{-(t+s)}(0;0))= \lim_{s\rightarrow \infty}\mathbb{E}F(u_{-(t+s)}(0;0))\\
     \label{e-46}
    &=& \lim_{s\rightarrow \infty}\mathbb{E}F(\eta_{-(t+s)}(0))=\mathbb{E}F(X(0))
= \int_{L^1_{w;x}}F(\xi)\mu(d\xi),
  \end{eqnarray}
  for every $F\in C_b(L^1_{w;x})$, here we have used the Feller property of $P_{0,t}$, $P_{s,r}P_{r,t}=P_{s,t}$ for any $s<r<t$, as well as $P_{s,t}=P_{s+r,t+r}$ for every $r\in \mathbb{R}$.
(\ref{e-46}) shows that  $\mu$ is an invariant measure of $P_t$ on $L^1_{w;x}$.

From Corollary \ref{cor-5}, we know that for any $F\in Lip(L^1_{w;x})$ and $\vartheta, \tilde{\vartheta}\in L^1_{w;x}$,
\begin{eqnarray}\notag
  |P_tF(\vartheta)-P_tF(\tilde{\vartheta})|&=&|\mathbb{E}F(v(t;\vartheta))-\mathbb{E}F(v(t;\tilde{\vartheta}))|\\ \notag
  &\leq& \|F\|_{Lip(L^1_{w;x})}\mathbb{E}\|v(t;\vartheta)-v(t;\tilde{\vartheta})\|_{L^1_{w;x}}\\
\label{z-4}
  &\lesssim &\|F\|_{Lip(L^1_{w;x})}\|w\|^{q^*}_{L^{q^*}_x}|t|^{-\frac{1}{q_0}},
\end{eqnarray}
which implies that any two invariant measures $\mu$ and $\tilde{\mu}$ on $L^1_{w;x}$ coincide.

 Finally, by utilizing (\ref{z-4}), it follows that for all $t>0$,
  \begin{eqnarray*}
  \Big|P_tF(\vartheta)-\int_{L^1_{w;x}}F(\xi)\mu(d\xi)\Big|&=&|P_tF(\vartheta)-\int_{L^1_{w;x}}P_tF(\xi)\mu(d\xi)|\\
  &=&\Big|P_tF(\vartheta)-\mathbb{E}P_t F(X(0))\Big|\\
  &\leq & \mathbb{E}|P_tF(\vartheta)-P_t F(X(0))|\\
  &\lesssim& \|F\|_{Lip(L^1_{w;x})}\|w\|^{q^*}_{L^{q^*}_x}|t|^{-\frac{1}{q_0}}.
\end{eqnarray*}
Taking the supremum over $\|F\|_{Lip(L^1_{w;x})}\leq 1$ and $\vartheta\in L^1_{w;x}$, we get the desired result.

\end{proof}

\vskip 0.2cm
\noindent{\bf  Acknowledgements}\quad  This work is partly supported by National Natural Science Foundation of China (No. 11671372, 11721101, 11801032, 11931004, 11971456), Key Laboratory of Random Complex Structures and Data Science, Academy of Mathematics and Systems Science, CAS (No. 2008DP173182) and Beijing Institute of Technology Research Fund Program for Young Scholars.

\def\refname{ References}

\end{document}